\swapnumbers \numberwithin{equation}{section}
\theoremstyle{plain}
\newtheorem{thm}{Theorem}[section]
\newtheorem{theorem}[thm]{Theorem}
\newtheorem{lemma}[thm]{Lemma}
\newtheorem{prop}[thm]{Proposition}
\newtheorem{cor}[thm]{Corollary}
\theoremstyle{definition}
\newtheorem{defn}[thm]{Definition}
\newtheorem{remark}[thm]{Remark}
\newtheorem{question}[thm]{Question}
\def\im{\protect\operatorname{im}}
\DeclareMathOperator{\cd}{{\rm cd}}
\DeclareMathOperator{\TC}{{\rm TC}}
\DeclareMathOperator{\supp}{{\rm supp}}
\DeclareMathOperator{\dTC}{{\rm dTC}}
\DeclareMathOperator{\ATC}{{\rm ATC}}
\def\Int{\protect\operatorname{Int}}
\def\cat{\protect\operatorname{cat}}
\def\acat{\protect\operatorname{acat}}
\def\Z{{\mathbb Z}}
\def\1{\hbox{\rm\rlap {1}\hskip.03in{\rom I}}}
\def\Bbbone{{\rm1\mathchoice{\kern-0.25em}{\kern-0.25em}
{\kern-0.2em}{\kern-0.2em}I}}
\long\def\forget#1\forgotten{} %
\newcommand\ver[1]{\marginpar{\tiny Changed in Ver \VER}}
\date{\today}
\begin{document}

\title[Distributional TC]{Distributional Topological Complexity of groups}

\author[A.~Dranishnikov]{Alexander  Dranishnikov$^{1}$} 


\thanks{$^{1}$Supported by Simons Foundation}

\address{Alexander N. Dranishnikov, Department of Mathematics, University
of Florida, 358 Little Hall, Gainesville, FL 32611-8105, USA}
\email{dranish@ufl.edu}

\subjclass[2000]
{Primary  53C23,  
Secondary 57N65, 19L41, 19M05, 20F36  
}

\keywords{}

\begin{abstract}
We study numerical invariants $d\TC(\Gamma)$ and $d\cat(\Gamma)$ of groups recently introduced in~\cite{DJ} and independently in~\cite{KW}. 
We compute $d\TC$ for finite cyclic groups $\mathbb Z_p$ with prime $p$ as well as
for nonorientable surfaces of genus $g>3$ (for orientable surfaces it was computed in~\cite{DJ}). We prove the formula
$$d\TC(G\ast H)=\max\{d\TC (G),d\TC (H), \cd(G\times H)\}$$ for torsion free groups.
\end{abstract}


  \keywords{topological complexity, Lusternik-Schnirelman category, symmetric product, cohomological dimension}

\maketitle

\section{Introduction}
The topological complexity of a configuration space $X$ is a numerical invariant $\TC(X)$ which appeared in topological robotics~\cite{Fa2}.
It is closely related to an old numerical invariant called the Lusternik-Schnirelmann categpory~\cite{LS} of a space $X$, $\cat (X)$. Since $\TC(X)$ and $\cat(X)$ are homotopy
invariant they bring numerical invariant of discrete groups defined as $\TC(\Gamma)=\TC(B\Gamma)$ and $\cat(\Gamma)=\cat(B\Gamma)$ where $B\Gamma=K(\Gamma,1)$ is a classifying space for 
the group $\Gamma$. 

In 50s  Eilenberg and Ganea proved~\cite{EG} that the LS-category of a group $\Gamma$ coincides with its cohomological dimension $\cat(\Gamma)=\cd(\Gamma)$.
An algebraic description of $\TC(\Gamma)$ is still missing. What is known that $\cd(\Gamma) \le\TC(\Gamma)\le\cd(\Gamma\times\Gamma)$. This implies in particular that
$\TC(\Gamma)=\infty$ for groups with torsions. It turns out that it is quite difficult to compute the topological complexity of groups $\TC(\Gamma)$~\cite{FM},\cite{GLO}.
There are very few exact computations are known: $\TC(A)=\cd(A)$ for a free abelian group $A$, $\TC(F)=2$ for free non-abelian groups, and 
$\TC(M_g)=4$, $g>1$ for the fundamental group of orientable surfaces (all done in~\cite{Fa1}), for the fundamental group of non-orientable surfaces of genus $g>1$, $\TC(N_g)=4$~\cite{CV} (and~\cite{Dr1} for $g>3$), $\TC(\Gamma)=2\cd(\Gamma)$ for torsion free hyperbolic groups~\cite{Dr3} and for certain toral relatively hyperbolic groups~\cite{Li}.

Recently we in~\cite{DJ} and independently  Ben Knudsen and Shmuel Weinberger  in~\cite{KW}  have defined a new  probabilistic versions of the Lusternik-Schnirelmann category and the topological complexity  of a topological space $X$. Though we were motivated by topological robotics, we introduced new numerical homotopy invariants of spaces and, hence, 
new numerical invariants of discrete groups. In this paper we attempted a further study of these numerical invariants for groups. We call our invariants the distributional topological complexity, denoted as $d\TC$, and the distributional LS-category, denoted as $d\cat$. Knudsen and Weinberger called them analog topological complexity and the analog LS-category and denoted them by
$\ATC$ and $\acat$ respectively. Formally there is a difference between our definitions which is in the choice of topology on the space $\mathcal B_n(Z)$ of probability measures $\mu$ on a topological space $Z$ with the cardinality of supports bounded by $n$,  $|\supp(\mu)|\le n$. The elements of $\mathcal B_n(Z)$ can be viewed as finite linear combinations $\mu=\sum_{z\in Z}\lambda_zz$ of points in $Z$
with $\sum\lambda_z=1$ and $\lambda_z\ge 0$.

There are several ways to introduce topology on $\mathcal B_n(Z)$. The finest topology is the quotient topology which comes from the symmetric join product
 by means of a quotient map $q:Symm^{\ast n}(X)\to \mathcal B_n(Z)$~\cite{KK}. By the definition
$Symm^{\ast n}(Z):=\ast^nZ/S_n$ is the orbit space of action of the symmetric group $S_n$ on the iterated join product $\ast^nZ=Z\ast\cdots\ast Z$. 
 Elements of $Symm^{\ast n}(Z)$ can be seen as formal sums $t_1x_1+\cdots t_nx_n$ with no order 
on the summands and the convention $0x=0y$ for each summand. Then the quotient map $q:Symm^{\ast n}(X)\to \mathcal B_n(Z)$  is defined by the condition $tx+t'x=(t+t')x$. We note that $q$ is a map with compact contractible fibers.

Knudesn and Weinberger used the quotient topology which is non-metrizable 
but it is the standard choice in the definition of $\mathcal B_n(Z)$ (see~\cite{KK}). We decided that a metric topology is more appropriate for robotics and chose
the Levy-Prokhorov metric $d_{LP}$ from several known metrics on measures. Clearly the identity map $(\mathcal B_n(Z),\tau_q)\to(\mathcal B_n(Z),d_{LP})$ is continuous~\cite{J}.
Moreover, it is possible to show that this map is a homotopy equivalence when $Z$ is a locally finite CW-complex. We are not giving a proof of it, since in all  proofs of our results in this paper any choice of topology is good. In this paper we will be using the notations from~\cite{DJ} for the invariants. 

We note when  $Z$ is a discrete space, $\mathcal B_n(Z)$ is the $n$-skeleton of  the simplex $\Delta(Z)$ spanned by $Z$
with the CW-complex topology in the first case and with the metric topology taken from the Hilbert space $\Delta(Z)\subset \ell_2(Z)$ in the second case. Since these topologies agree on finite subcomplexes, the identity map is a weak homotopy equivalence and, since we are dealing with ANR-spaces, it is a homotopy equivalence.

Here are the definitions.
{\em The distributional topological complexity}, $d\TC(X)$, of a space $X$ is the minimal number $n$ such that there is a continuous map $$s:X\times X\to \mathcal B_{n+1}(P(X))$$ satisfying $s(x,y)\in \mathcal B_{n+1}(P(x,y))$ for all $(x,y) \in X \times X$ where $P(x,y)\subset P(X)$ is the set of all paths  in $X$ from $x$ to $y$.

{\em The distributional LS-category}, $d\cat(X)$, of a space $X$ is the minimal number $n$ such that there is a continuous map $$s:X\to \mathcal B_{n+1}(P(X))$$ satisfying $s(x)\in \mathcal B_{n+1}(P(x,x_0))$ for all $x \in X$. 

Like in the case of classical invariants $\TC$ and $\cat$ there are inequalities
$$
d\cat(\Gamma)\le d\TC(\Gamma)\le d\cat(\Gamma\times\Gamma).
$$
Knudsen and Weinberger pointed out on more similarity by proving the Eilenberg-Ganea equality $d\cat\Gamma=\cd(\Gamma)$ for torsion free groups.
The classical Eilenberg-Ganea theorem~\cite{EG} gives the equality $\cat\Gamma=\cd(\Gamma)$ for all groups $\Gamma$.
The striking difference was found in both~\cite{DJ},~\cite{KW} $$d\cat(\mathbb Z_2)=d\TC(\mathbb Z_2)=1.$$
We recall that $\cd(\Gamma)=\infty$ for every finite group.
In contrast, it was proven  in~\cite{KW} that $d\TC(G)\le |\Gamma|-1$ for any finite group $\Gamma$. 

Our main result of this paper is the following.

\

{\bf Theorem A.} (Theorem~\ref{main 1}) {\em For any prime} $p$,
$$d\cat(\mathbb Z_p)=d\TC(\mathbb Z_p)=p-1.$$

Our next result is the free product formula.

\

{\bf Theorem B.} (Theorem~\ref{free}) {\em For torsion free groups,}
$$d\TC(G\ast H)=\max\{d\TC (G),d\TC (H), \cd(G\times H)\}.$$

In~\cite{DJ} we proved that for the fundamental group of orientable surface groups $d\TC(\pi_1(M_g))=d\TC(M_g)=4$ when $g>1$.
In this paper we proved the following theorem for non-orientable surface groups.

\

{\bf Theorem C.} (Theorem~\ref{nonor}) $d\TC(N_g)=4$ for $g>3$.

\

Some of the auxiliary results in the paper could be of interest on their own. One of them is an extension of Singhof's theorem on dimension of categorical sets
to arbitrary simplicial complexes (Theorem~\ref{f-categorical}). Another is the statement about connectivity of $\mathcal B_n(X)$ for general CW complexes $X$ (Theorem~\ref{connectivity} and Theorem~\ref{connectivity1}). We were not able to find corresponding results in the literature.
In a relatively recent paper~\cite{KK} the statement about connectivity of $\mathcal B_n(X)$ improves the classic result of Nakaoka~\cite{Na} but it covers only the 
case of simply connected $X$.

\section{Preliminaries}

\subsection{The LS-category.} 
The {\emph{Lusternik-Schnirelmann category}, $\cat (X)$, of  $X$ is the least number $n$ such that there is a covering  
$\{U_i\}$ of $X$ by $n+1$ open sets each of which is contractible in $X$.

Let $P_0(X)$ be the space of paths in $X$ ending at the  base point $x_{0} \in X$.
 Let $p_0=p_0^X: P_{0}(X) \to X$ be the evaluation fibration $p_0(\phi)= \phi(0)$. Then we define the $n^{th}$ Ganea space, denoted $G_{n}(X)$, to be the fiberwise join of  $(n+1)$-copies of $P_{0}(X)$ along $p_0$, i.e.,
$$
G_{n}(X) = \left\{\sum_{i=1}^{n+1} \lambda_{i} \phi_{i} \hspace{1mm} \middle| \hspace{1mm} \phi_{i} \in P_{0}(X), \sum_{i=1}^{n+1}\lambda_{i} = 1, \lambda_{i} \geq 0, \phi_{i}(0) = \phi_{j}(0)\right\}.
$$
We denote by  $p_{n}^{X} : G_{n}(X) \to X$ the $n^{th}$ Ganea fibration, where  $p_{n}^{X} (\sum\lambda_{i} \phi_{i}) = \phi_{i}(0)$, for any $i$ such that $\lambda_{i} > 0$. Thus, $p^X_0=p_0$. The following theorem gives the Ganea-Schwarz characterization of the LS-category~\cite{Sch}, \cite{CLOT}.

\begin{theorem}\label{original}
    For any $X$, $\textup{\text{cat}}(X) \leq n$ if and only if the fibration $p_{n}^{X}: G_{n}(X) \to X$ admits a section.
\end{theorem}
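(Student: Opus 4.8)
The plan is to prove the two implications separately, passing between open categorical covers and sections of the Ganea fibration by means of a partition of unity; this is the classical Ganea--Schwarz argument.

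First I would treat the ``only if'' direction. Suppose $\cat(X)\le n$, so there is an open cover $U_0,\dots,U_n$ of $X$ with each inclusion $U_i\hookrightarrow X$ null-homotopic. Since $X$ is path-connected, I can choose the null-homotopies $h_i\colon U_i\times I\to X$ with $h_i(x,0)=x$ and $h_i(x,1)=x_0$; such an $h_i$ is the same data as a continuous map $\sigma_i\colon U_i\to P_0(X)$ with $\sigma_i(x)(0)=x$. Next pick a partition of unity $\{\lambda_i\}_{i=0}^n$ subordinate to $\{U_i\}$ (here one uses that $X$ is paracompact, assumed throughout as is standard for these results), extend $\lambda_i\sigma_i$ by the degenerate value outside $U_i$ using the convention $0x=0y$, and define $s(x)=\sum_{i=0}^n\lambda_i(x)\,\sigma_i(x)$. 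Whenever $\lambda_i(x)$ and $\lambda_j(x)$ are both positive one has $\sigma_i(x)(0)=x=\sigma_j(x)(0)$, so $s(x)$ is a legitimate point of $G_n(X)$, and $p_n^X(s(x))=x$ by construction; thus $s$ is a section of $p_n^X$.

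For the ``if'' direction, let $s\colon X\to G_n(X)$ be a section and write $s(x)=\sum_{i=0}^n\lambda_i(x)\phi_i(x)$. Set $U_i=\{x\in X:\lambda_i(x)>0\}$; these sets are open, and since $\sum_i\lambda_i\equiv 1$ they cover $X$. Over $U_i$ the $i$-th barycentric coordinate $\lambda_i$ and the $i$-th path coordinate $\phi_i$ are well-defined and continuous (this is exactly how the topology on the fiberwise join is built), giving a continuous $\phi_i\colon U_i\to P_0(X)$ with $\phi_i(x)(0)=p_n^X(s(x))=x$ and $\phi_i(x)(1)=x_0$. Then $H_i(x,t)=\phi_i(x)(t)$ is a homotopy in $X$ from the inclusion $U_i\hookrightarrow X$ to the constant map at $x_0$, so each $U_i$ is contractible in $X$ and hence $\cat(X)\le n$.

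The step I expect to require the most care is the interface between the combinatorial data $\{\lambda_i\}$ and the topology on $G_n(X)$. In the forward direction one must verify that $x\mapsto\sum_i\lambda_i(x)\sigma_i(x)$ is continuous across the ``seams'' $\partial U_i$, which works precisely because the $i$-th summand degenerates as $\lambda_i\to 0$, matching the join convention $0x=0y$; in the backward direction one must check that $\lambda_i$ and $\phi_i$ descend to \emph{continuous} functions on $U_i$. Everything else---producing the null-homotopies, checking $p_n^X\circ s=\id_X$---is formal. I am also tacitly using that $X$ is path-connected (to land the null-homotopies at $x_0$) and paracompact (for the partition of unity), which are the implicit standing hypotheses here.
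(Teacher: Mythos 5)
Your argument is the classical Ganea--Schwarz proof (open categorical cover $\leftrightarrow$ section of the fiberwise join via a partition of unity, with the $0x=0y$ convention absorbing the degeneration at the seams), and it is correct, including your careful handling of continuity across $\partial U_i$ and of path-connectedness and paracompactness as standing hypotheses. The paper does not re-prove Theorem~\ref{original}; it cites it from \cite{Sch} and \cite{CLOT}, and your reconstruction is exactly the proof one finds there.
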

\subsection{The Topological Complexity (TC)}
Let $P(X)$ be the space of all paths in $X$ and  let $\bar p=\bar p^X: P(X) \to X\times X$ denote the end-points fibration $\bar p(\phi)=(\phi(0),\phi(1))$.
The \emph{topological complexity}, $\TC(X)$, of  $X$ is the least number $n$  such that there is a covering $\{U_i\}$ of $X\times X$ by $n+1$ open sets where each of which  
admits a motion planning algorithm. We recall that a \emph{motion planning algorithm} over an open subset $U\subset X\times X$ is a section $U\to P(X)$ of $\bar p^X$.
The following is straightforward.
\begin{prop}
A set $U\subset X\times X$ admits a motion planning algorithm if and only if it admits a deformation in $X\times X$ to the diagonal $\Delta X$, i.e. there is a homotopy $H:U\times I\to X\times X$
such that $H((x,y),0)=(x,y)$ and $H((x,y),1)\in\Delta X$ for all $(x,y)\in U$.
\end{prop}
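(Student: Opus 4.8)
The plan is to prove the two implications by mutually inverse explicit constructions, so that the statement becomes a direct reformulation of the definition of a motion planning algorithm.

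First, for the ``only if'' direction, suppose $s\colon U\to P(X)$ is a section of $\bar p^X$, so that $s(x,y)$ is a path with $s(x,y)(0)=x$ and $s(x,y)(1)=y$ depending continuously on $(x,y)$. I would define $H\colon U\times I\to X\times X$ by $H((x,y),t)=(x,\,s(x,y)(1-t))$. Continuity of $H$ follows from continuity of $s$ and of the evaluation map $P(X)\times I\to X$; evaluating at $t=0$ gives $H((x,y),0)=(x,y)$, and at $t=1$ gives $H((x,y),1)=(x,x)\in\Delta X$. So $H$ is the required deformation of $U$ to the diagonal.

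Second, for the ``if'' direction, suppose $H\colon U\times I\to X\times X$ is a homotopy with $H((x,y),0)=(x,y)$ and $H((x,y),1)\in\Delta X$, and write $H=(H_1,H_2)$ with $H_i\colon U\times I\to X$. For each $(x,y)$ the track $t\mapsto H_1((x,y),t)$ runs from $x$ to the common point $c(x,y):=H_1((x,y),1)=H_2((x,y),1)$, while $t\mapsto H_2((x,y),t)$ runs from $y$ to $c(x,y)$. I would then set
$$
s(x,y)(t)=\begin{cases}H_1((x,y),2t),& 0\le t\le \tfrac12,\\ H_2((x,y),2-2t),& \tfrac12\le t\le 1,\end{cases}
$$
which is a well-defined path (the two branches agree at $t=\tfrac12$, both equal $c(x,y)$) running from $x$ to $y$; that is, $\bar p^X(s(x,y))=(x,y)$, so $s$ is a section of $\bar p^X$ over $U$.

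The only point needing care --- and the one I would single out as the technical heart, though it is not a genuine obstacle --- is continuity of $s\colon U\to P(X)$ in the compact--open topology. The displayed formula defines a map $U\times I\to X$ that is continuous on each of the closed pieces $U\times[0,\tfrac12]$ and $U\times[\tfrac12,1]$ and agrees on their overlap, hence is continuous; since $I$ is locally compact Hausdorff, the exponential law identifies continuous maps $U\times I\to X$ with continuous maps $U\to X^I=P(X)$, so $s$ is continuous. Likewise the continuity check in the first direction is routine. This establishes the equivalence, and the proposition is thus a straightforward repackaging of the definition of a motion planning algorithm.
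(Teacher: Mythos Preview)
Your proof is correct; both constructions are the standard ones and the continuity verification via the exponential law is exactly what is needed. The paper itself gives no proof of this proposition, merely introducing it with ``The following is straightforward,'' so your argument supplies precisely the routine details the author chose to omit.
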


We define the $n^{th}$ Schwarz-Ganea space, denoted $\Delta_{n}(X)$, to be the fiberwise join of  $(n+1)$-copies of $P(X)$ along $\overline{p}$, i.e.,
$$
\Delta_{n}(X) = \left\{\sum_{i=1}^{n+1} \lambda_{i} \phi_{i} \hspace{1mm} \middle| \hspace{1mm} \phi_{i} \in P(X), \sum_{i=1}^{n+1}\lambda_{i} = 1, \lambda_{i} \geq 0, \bar p(\phi_{i})=\bar  p(\phi_{j})\right\}.
$$
 We define the $n^{th}$ Schwarz-Ganea fibration, $\bar p_{n}^{X} : \Delta_{n}(X) \to X\times X$, as  $$\bar p_{n}^{X} \left(\sum_{i=1}^{n+1} \lambda_{i} \phi_{i}\right) = (\phi_{i}(0),\phi_{i}(1))$$ for any $i$ with $\lambda_{i} > 0$. Then the following theorem gives the Ganea-Schwarz characterization of the topological complexity \cite{Sch}.

\begin{theorem}\label{original2}
    For any $X$, $\TC(X) \leq n$ if and only if the fibration $\bar p_{n}^{X}: \Delta_{n}(X) \to X\times X$ admits a section.
\end{theorem}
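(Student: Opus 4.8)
The plan is to prove both implications directly from the definitions; this is Schwarz's characterization of sectional category for the particular fibration $\bar p^X\colon P(X)\to X\times X$, so the argument runs parallel to the $\cat$ case (Theorem~\ref{original}).

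For the ``if'' direction, suppose $s\colon X\times X\to\Delta_n(X)$ is a section of $\bar p^X_n$ and write $s(x,y)=\sum_{i=1}^{n+1}\lambda_i(x,y)\,\phi^{(x,y)}_i$. Set $U_i=\{(x,y):\lambda_i(x,y)>0\}$. The $U_i$ are open and cover $X\times X$ since the weights sum to $1$, and on $U_i$ the map $(x,y)\mapsto\phi^{(x,y)}_i\in P(X)$ is continuous --- this is the standard fact that in an iterated join the $i$th point-coordinate is unambiguous (using $0\phi=0\phi'$) and continuous exactly where the $i$th weight is positive. Since $\bar p^X_n\circ s=\id$, the defining formula for $\bar p^X_n$ gives $\bar p^X(\phi^{(x,y)}_i)=(x,y)$ on $U_i$, so $(x,y)\mapsto\phi^{(x,y)}_i$ is a motion planning algorithm over $U_i$, and therefore $\TC(X)\le n$.

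For the ``only if'' direction, suppose $\{U_0,\dots,U_n\}$ is an open cover of $X\times X$ with sections $\sigma_i\colon U_i\to P(X)$ of $\bar p^X$, and choose a partition of unity $\{f_i\}_{i=0}^{n}$ subordinate to it (available since $X\times X$ is paracompact; in full generality one phrases the cover as numerable). Define $s\colon X\times X\to\Delta_n(X)$ by $s(x,y)=\sum_{i=0}^{n} f_i(x,y)\,\sigma_i(x,y)$, where a term with $f_i(x,y)=0$ is read as the zero element via $0\phi=0\phi'$, so that $s$ is well defined even though $\sigma_i$ is only defined over $U_i$. The weights sum to $1$, and whenever $f_i(x,y)>0$ one has $(x,y)\in U_i$ and $\bar p^X(\sigma_i(x,y))=(x,y)$; hence all paths occurring with positive weight share the endpoints $(x,y)$, so $s(x,y)$ lies in $\Delta_n(X)$ and $\bar p^X_n(s(x,y))=(x,y)$. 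The one point needing attention is continuity of $s$: near any point only finitely many $f_i$ are nonzero, each relevant $\sigma_i$ is continuous on the open set $\{f_i>0\}\subseteq U_i$, and the identification $0\phi=0\phi'$ lets a path fade out continuously as its weight tends to $0$; this is the standard verification.

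Thus the only genuinely technical ingredient, in either direction, is the continuity bookkeeping around the convention $0\phi=0\phi'$ and the local finiteness of the partition of unity; everything else is formal manipulation of joins. If one prefers not to repeat it, both the coordinate-extraction step in the first part and the continuity of $s$ in the second can be quoted from Schwarz~\cite{Sch}, Theorem~\ref{original2} being his theorem on the genus of a fibration specialized to the end-points fibration $\bar p^X$.
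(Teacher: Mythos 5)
The paper does not prove this statement at all: Theorem~\ref{original2} is stated as a quotation of Schwarz's theorem on the genus of a fibration, with a citation to \cite{Sch} and no argument given. Your proof fills in the standard Schwarz argument, and it is correct: in one direction you extract local sections $\sigma_i=\phi_i^{(x,y)}$ of $\bar p^X$ over the open sets $U_i=\{\lambda_i>0\}$ where the $i$th join coordinate is defined and continuous; in the other you glue local sections $\sigma_i$ over a numerable cover with a subordinate partition of unity $\{f_i\}$ to produce $s=\sum f_i\sigma_i$ into the fiberwise join, using the identification $0\phi=0\phi'$ to make the fading-out of a term continuous. Both halves and the continuity bookkeeping are exactly what Schwarz does for the sectional category of an arbitrary fibration, specialized here to the end-points fibration $\bar p^X\colon P(X)\to X\times X$, so your argument agrees with the intended (cited) proof rather than taking a different route.

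One small point worth making explicit in the gluing direction: to make $f_i\sigma_i$ genuinely continuous on all of $X\times X$ you want $\supp f_i\subset U_i$ (not merely $\{f_i>0\}\subset U_i$), or else you should appeal to the fact that the topology on the fiberwise join is such that a summand with weight tending to $0$ converges to the basepoint of its join factor irrespective of the path coordinate; either way the issue is real but standard, and you correctly flag it as the technical crux. As stated, the proposal is a sound, if slightly informal, proof of Theorem~\ref{original2}.
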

The invariants $\cat $ and $\TC$ are defined for maps $f:X\to Y$ as follows. The LS-category $\cat (f)$ is the least number $n$ such that there is a covering  
of $X$ by $n+1$ open sets $\{U_i\}_{i=0}^n$ such that the restrictions $f|_{U_i}$ are null-homotopic for all $i$~\cite{Fo}.
The topological complexity $\TC(f)$ is the least number $n$ such that there is a covering  
of $X\times X$ by $n+1$ open sets $\{U_i\}_{i=0}^n$ such that for each $i$ the restriction $f|_{U_i}$ is homotopic to a map with the image in the diagonal $\Delta Y$~\cite{Sco}.
Note that $\cat(1_X)=\cat(X)$ and $\TC(1_X)=\TC(X)$.

Theorem~\ref{original} and Theorem~\ref{original2} can be extend to the following.
\begin{prop}\label{liftCL}
Let $f:X\to Y$ be a map. Then

(a)~\cite{Dr1}   $\cat(f)\le n$ if and only if $f$ admits a lift with respect to $p_n^Y$;

(b)~\cite{Sco}  $\TC(f)\le n$ if and only if $f\times f$  admits a lift with respect to $\bar p_n^Y$.
\end{prop}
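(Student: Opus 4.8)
The plan is to adapt the proofs of Theorems~\ref{original} and~\ref{original2} to the relative setting, i.e.\ to run Schwarz's fiberwise-join construction with $f$ (resp.\ $f\times f$) in place of the identity. Formally, for (a) this amounts to the identities $\cat(f)=\operatorname{secat}(f^{*}(p_{0}^{Y}))$ together with the observation that $f^{*}(G_{n}(Y))\to X$ is the $(n+1)$-fold fiberwise join over $X$ of $f^{*}(P_{0}(Y))\to X$, so that a section of it is exactly a lift of $f$ along $p_{n}^{Y}$; part (b) is the same with $f\times f$, $X\times X$, $P(Y)$, $\bar p^{Y}$, $\Delta_{n}(Y)$, $\bar p_{n}^{Y}$ in place of $f$, $X$, $P_{0}(Y)$, $p_{0}^{Y}$, $G_{n}(Y)$, $p_{n}^{Y}$, using $\TC(f)=\operatorname{secat}((f\times f)^{*}(\bar p^{Y}))$. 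So the first concrete step is to record the dictionary between local homotopy data and partial lifts: over an open $U\subset X$, a null-homotopy of $f|_{U}$ is precisely a map $s\colon U\to P_{0}(Y)=G_{0}(Y)$ with $p_{0}^{Y}\circ s=f|_{U}$ (up to a harmless further homotopy of the constant to the basepoint when $Y$ is path connected), and over an open $U\subset X\times X$, a homotopy of $(f\times f)|_{U}$ into $\Delta Y$ corresponds, by concatenating the first coordinate homotopy forward with the second backward, to a map $s\colon U\to P(Y)=\Delta_{0}(Y)$ with $\bar p^{Y}\circ s=(f\times f)|_{U}$; this is exactly the relative form of the motion-planning criterion stated above.

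For the ``only if'' directions I would take the open cover $\{U_{i}\}_{i=0}^{n}$ witnessing $\cat(f)\le n$ (resp.\ $\TC(f)\le n$), pass to the partial lifts $s_{i}\colon U_{i}\to P_{0}(Y)$ (resp.\ $s_{i}\colon U_{i}\to P(Y)$) provided by the dictionary, choose a partition of unity $\{\psi_{i}\}$ subordinate to $\{U_{i}\}$, and assemble $\widetilde f(x)=\sum_{i=0}^{n}\psi_{i}(x)\,s_{i}(x)$, reading the $i$th summand as $0$ whenever $\psi_{i}(x)=0$. This lands in $G_{n}(Y)$ (resp.\ $\Delta_{n}(Y)$) because all the paths $s_{i}(x)$ occurring with positive coefficient share the initial point $f(x)$ (resp.\ the pair of endpoints $(f(x),f(x'))$), and it satisfies $p_{n}^{Y}\circ\widetilde f=f$ (resp.\ $\bar p_{n}^{Y}\circ(\widetilde{f\times f})=f\times f$), so it is the desired lift.

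For the ``if'' directions I would run the construction in reverse. The sets $W_{i}=\{\sum_{j}\lambda_{j}\phi_{j}:\lambda_{i}>0\}$ form an open cover of $G_{n}(Y)$ (resp.\ $\Delta_{n}(Y)$), and the projection $\sum_{j}\lambda_{j}\phi_{j}\mapsto\phi_{i}$ is a continuous map $W_{i}\to P_{0}(Y)$ exhibiting $p_{n}^{Y}|_{W_{i}}$ as a factorization through $p_{0}^{Y}$, hence --- since $P_{0}(Y)$ is contractible --- as null-homotopic (resp.\ a continuous map $W_{i}\to P(Y)$ exhibiting $\bar p_{n}^{Y}|_{W_{i}}$ as a factorization through $\bar p^{Y}$, which is homotopic to the diagonal-valued map $\phi\mapsto(\phi(0),\phi(0))$ via $(\phi,t)\mapsto(\phi(0),\phi(t))$). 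Given a lift $\widetilde f$ of $f$, the sets $U_{i}=\widetilde f^{-1}(W_{i})$ then cover $X$ and $f|_{U_{i}}=(p_{n}^{Y}|_{W_{i}})\circ(\widetilde f|_{U_{i}})$ is null-homotopic, so $\cat(f)\le n$; symmetrically, from a lift of $f\times f$ the sets $U_{i}=(\widetilde{f\times f})^{-1}(W_{i})$ cover $X\times X$ with $(f\times f)|_{U_{i}}$ homotopic into $\Delta Y$, so $\TC(f)\le n$.

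I expect the only genuine point of care to be the amalgamation $\widetilde f=\sum_{i}\psi_{i}s_{i}$ in the ``only if'' direction: one must check it is continuous, in particular at points where some $\psi_{i}$ vanishes, which is exactly where the join convention $0x=0y$ and the fact that the partial lifts agree on the relevant basepoints over the overlaps are used, and where paracompactness of $X$ (resp.\ $X\times X$) --- needed for the subordinate partition of unity --- enters. The remaining checks (that the $W_{i}$ genuinely cover the Ganea/Schwarz--Ganea spaces, that the factorizations and homotopies above are continuous, and the path-connectedness caveat in the dictionary) are routine.
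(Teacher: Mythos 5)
Your argument is correct and coincides with the standard Schwarz--Ganea fiberwise-join proof that the cited references [Dr1, Sco] use; the paper itself supplies no proof beyond the citations, so there is nothing internal to compare against. The one detail worth stating explicitly is that the ``only if'' direction needs the null-homotopy (resp.\ the homotopy into $\Delta Y$) to be rechosen so that its endpoint is the constant at the basepoint $y_{0}$ (resp.\ so that the resulting path runs from $f(x)$ to $f(x')$), which you flag correctly; everything else, including the use of the join convention $0\cdot\phi=\ast$ to make $\sum_{i}\psi_{i}s_{i}$ continuous across $\partial U_{i}$ and the observation that $\bar p^{Y}$ is fiberwise homotopic to the diagonal evaluation, is exactly as in the references.
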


We will be using the following formula from~\cite{DS}.
\begin{thm}\label{wedgeCL}
The equality
$$\TC(X\vee Y)=\max\{\TC (X),\TC (Y),\cat(X\times Y)\}$$
holds for CW complexes $X$ and $Y$
whenever $$\max\{\dim X,\dim Y\}<\max\{\TC (X),\TC (Y),\cat(X\times Y)\}.$$
\end{thm}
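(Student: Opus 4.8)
The plan is to prove the two inequalities separately; the bound $\TC(X\vee Y)\ge\max\{\TC(X),\TC(Y),\cat(X\times Y)\}$ holds for all CW complexes (we may assume $X,Y$ are path-connected, the disconnected case being trivial since then all three numbers and $\TC(X\vee Y)$ are infinite), while the dimension hypothesis is needed only for the reverse inequality. For the lower bound: $X$ and $Y$ are retracts of $X\vee Y$, so standard monotonicity of $\TC$ under retracts gives $\TC(X\vee Y)\ge\TC(X),\TC(Y)$; equivalently, composing paths with the retraction $X\vee Y\to X$ shows that the sectional category of $\bar p^{X\vee Y}$ restricted over $X^2\subset(X\vee Y)^2$ equals $\TC(X)$, and sectional category only drops under restriction to a subspace. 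For the third term I would use that a subset $U\subseteq X\times Y$ carries a motion planner in $X\vee Y$ exactly when the two coordinate projections $U\to X\vee Y$ are homotopic in $X\vee Y$; applying the retractions of $X\vee Y$ onto $X$ and onto $Y$ turns this into ``$U\to X$ is null-homotopic and $U\to Y$ is null-homotopic'', i.e. $U$ is contractible in $X\times Y$. Hence the sectional category of $\bar p^{X\vee Y}$ over $X\times Y$ equals $\cat(X\times Y)$, and restriction gives $\cat(X\times Y)\le\TC(X\vee Y)$.

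For the upper bound put $N=\max\{\TC(X),\TC(Y),\cat(X\times Y)\}$; by Theorem~\ref{original2} it suffices to construct a section of $\bar p_N^{X\vee Y}\colon\Delta_N(X\vee Y)\to(X\vee Y)^2$. Write $(X\vee Y)^2=A\cup B$ with the closed subcomplexes $A=X^2\cup Y^2$ and $B=(X\times Y)\cup(Y\times X)$, and note that $A\cap B=\big((X\vee Y)\times\{x_0\}\big)\cup\big(\{x_0\}\times(X\vee Y)\big)$ has dimension $d:=\max\{\dim X,\dim Y\}$. From $\TC(X)\le N$ one gets a section of $\bar p_N^{X\vee Y}$ over $X^2$ (a motion planner in $X$ is one in $X\vee Y$), and likewise over $Y^2$; since $X^2$ and $Y^2$ meet only in the single point $(x_0,x_0)$ and the fiber of $\bar p_N^{X\vee Y}$ over any point is an $(N+1)$-fold join of loop spaces, hence $(N-1)$-connected and in particular path-connected (note $N\ge1$ under the hypothesis), a small homotopy supported near $(x_0,x_0)$ together with the pasting lemma produces a section over $A$. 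Similarly, $\cat(X\times Y)\le N$ lets us cover $X\times Y$ by $N+1$ sets contractible in $X\times Y$, each carrying a motion planner in $X\vee Y$, which assemble (Schwarz) into a section of $\bar p_N^{X\vee Y}$ over $X\times Y$; doing the same over $Y\times X$ and gluing over $(x_0,x_0)$ gives a section over $B$.

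The remaining step — which I expect to be the main obstacle, and the only place the hypothesis $d<N$ enters — is to glue the section over $A$ and the section over $B$ along $A\cap B$. They restrict to two sections of $\bar p_N^{X\vee Y}$ over $A\cap B$; since $\dim(A\cap B)=d\le N-1$ and the fiber of $\bar p_N^{X\vee Y}$ is $(N-1)$-connected, all obstruction groups $H^{q}(A\cap B;\pi_q(\text{fiber}))$ with $q\le d$ vanish, so the two restrictions are homotopic through sections over $A\cap B$. Extending such a homotopy over a neighborhood of $A\cap B$ in $B$ by the homotopy extension property and then pasting the adjusted section over $B$ with the section over $A$ yields a section of $\bar p_N^{X\vee Y}$ over all of $(X\vee Y)^2$; by Theorem~\ref{original2} this gives $\TC(X\vee Y)\le N$, and combined with the lower bound it proves the asserted equality. (The dimension hypothesis is sharp for this scheme: when $d=N$ one may instead have to spend one more open set to cover $A\cap B$, obtaining only $\TC(X\vee Y)\le N+1$.)
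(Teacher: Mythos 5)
Your argument is correct, and it follows essentially the same strategy the paper uses for the distributional analogue, Theorem~\ref{wedge}: the lower bound from retractions $r_X,r_Y$ together with the observation that the restriction of $\bar p^{X\vee Y}$ over $X\times Y$ has sectional genus $\cat(X\times Y)$, and the upper bound by decomposing $(X\vee Y)^2$ into $X^2,Y^2,X\times Y,Y\times X$, obtaining local sections of $\bar p_N^{X\vee Y}$ over each piece, and pasting along the $d$-dimensional intersection via obstruction theory, where $d<N$ and the fiber $\Omega(X\vee Y)^{\ast(N+1)}$ is $(N-1)$-connected (the gluing step is exactly Lemma~\ref{DS}). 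Note that the paper itself only cites this result from~\cite{DS}; the proof genuinely present in the paper is its $d\TC$ twin, Theorem~\ref{wedge}, which your proposal mirrors closely, the only cosmetic difference being that the paper routes the section over $X\times Y$ through the fiberwise map $h$ of Proposition~\ref{h} rather than through the open-cover (Schwarz) characterization of $\cat(X\times Y)$ that you use.
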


\subsection{Distributive versions of the LS-category and TC}
For any map $p:E\to B$ and $n\in\mathbb N$, we define the map $\mathcal B_n(p):E_n\to B$ as follows. Let $$E_n=\{\mu\in\mathcal B_n(E)\mid \supp\mu\subset p^{-1}(x), x\in B\}$$ 
denote the result of the fiberwise application of the functor $\mathcal B_n$ to $E$.
  We define
$\mathcal B_n(p)(\mu)=x$ for $\mu\in \mathcal B_n(p^{-1}(x))$. We proved in~\cite{DJ} that $\mathcal B_n(p)$ is a Hurewicz fibnration if so is $p$.

The following characterizations of $d\cat$ and $d\TC$ are taken from~\cite{DJ}.
\begin{thm}\label{chardcat}
$d\cat(X) \le n$ if and only if the fibration $$\mathcal B_{n+1}(p_{0}):P_0(X)_{n+1}\to X$$ admits a section.
\end{thm}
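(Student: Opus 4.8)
The plan is to observe that, once the topologies are matched up, the two conditions in the statement encode literally the same data, so the proof is a definition-chase. First I would identify the fibers of the path fibration $p_0\colon P_0(X)\to X$, $p_0(\phi)=\phi(0)$: for $x\in X$ the fiber $p_0^{-1}(x)$ is exactly the set $P(x,x_0)$ of paths in $X$ from $x$ to $x_0$. Unwinding the definition of $\mathcal B_{n+1}(p_0)$, this means
$$P_0(X)_{n+1}=\{\mu\in\mathcal B_{n+1}(P_0(X))\mid \supp\mu\subset P(x,x_0)\ \text{for some}\ x\in X\},$$
and that $\mathcal B_{n+1}(p_0)$ sends such a $\mu$ to the uniquely determined point $x$. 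Hence a section of $\mathcal B_{n+1}(p_0)$ is the same thing as a continuous map $\sigma\colon X\to\mathcal B_{n+1}(P_0(X))$ with $\sigma(x)\in\mathcal B_{n+1}(P(x,x_0))$ for every $x\in X$, the identity $\mathcal B_{n+1}(p_0)\circ\sigma=\id_X$ being automatic from that support condition.

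Next I would line this up with the definition of $d\cat$. Since $P(x,x_0)\subset P_0(X)\subset P(X)$, any map $s\colon X\to\mathcal B_{n+1}(P(X))$ witnessing $d\cat(X)\le n$ (that is, with $s(x)\in\mathcal B_{n+1}(P(x,x_0))$ for all $x$) takes values in the set of measures supported in the closed subspace $P_0(X)$. The inclusion $j\colon P_0(X)\hookrightarrow P(X)$ induces $\mathcal B_{n+1}(j)\colon\mathcal B_{n+1}(P_0(X))\to\mathcal B_{n+1}(P(X))$, which is a topological embedding onto precisely that set of measures (for the Lévy--Prokhorov metric because $d_{LP}$ restricts; for the quotient topology because the quotient map $q\colon Symm^{\ast(n+1)}(Z)\to\mathcal B_{n+1}(Z)$ is compatible with the closed inclusion $Symm^{\ast(n+1)}(P_0(X))\subset Symm^{\ast(n+1)}(P(X))$). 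Thus $s$ corestricts to a continuous map $X\to\mathcal B_{n+1}(P_0(X))$ which, by the first paragraph, lands in $P_0(X)_{n+1}$ and is a section of $\mathcal B_{n+1}(p_0)$. Conversely, post-composing a section of $\mathcal B_{n+1}(p_0)$ with $\mathcal B_{n+1}(j)$ gives a map $s$ of the required form, so $d\cat(X)\le n$; this establishes the equivalence.

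The step carrying all the (modest) content is the matching of topologies in the middle paragraph: one must check that $P_0(X)_{n+1}$, with the subspace topology it inherits from $\mathcal B_{n+1}(P_0(X))$, sits as a subspace of $\mathcal B_{n+1}(P(X))$, so that continuity of a map into $P_0(X)_{n+1}$ coincides with continuity of the composite into $\mathcal B_{n+1}(P(X))$. This reduces to the functoriality of $\mathcal B_{n+1}$ under closed inclusions noted above, and the recalled fact that $\mathcal B_{n+1}(p_0)$ is a Hurewicz fibration (since $p_0$ is) is what makes ``admits a section'' the right formulation. With these points in hand, nothing beyond the definition-chase above is needed.
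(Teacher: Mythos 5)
The paper does not actually give a proof of this statement: it states it as one of the ``characterizations of $d\cat$ and $d\TC$ \ldots\ taken from~\cite{DJ},'' so there is nothing in the present source to compare against line by line. That said, your definition-chase is the right argument and is almost certainly what~\cite{DJ} does. You correctly unwind $P_0(X)_{n+1}$ as the measures supported in a single fiber $p_0^{-1}(x)=P(x,x_0)$, so that a set-theoretic section of $\mathcal B_{n+1}(p_0)$ is precisely a map $\sigma$ with $\sigma(x)\in\mathcal B_{n+1}(P(x,x_0))$, the section identity being automatic; and you correctly identify the one nontrivial point, namely that the inclusion $P_0(X)\hookrightarrow P(X)$ (a closed embedding, as $P_0(X)$ is the preimage of the point $x_0$ under evaluation at $1$) must induce a topological embedding $\mathcal B_{n+1}(P_0(X))\hookrightarrow\mathcal B_{n+1}(P(X))$, so that corestriction preserves continuity. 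Your handling of that point for both the L\'evy--Prokhorov and the quotient topologies is reasonable; for the quotient topology the cleanest justification is that the quotient map $Symm^{\ast(n+1)}(P(X))\to\mathcal B_{n+1}(P(X))$ has compact fibers (as the paper remarks), so it behaves well under restriction to the saturated closed subset $Symm^{\ast(n+1)}(P_0(X))$. One minor remark: the fibration property of $\mathcal B_{n+1}(p_0)$ is not actually needed for this equivalence --- the ``iff'' is purely a matching of data --- though it is of course what makes the section condition useful downstream.
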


\begin{thm}\label{chardTC}
$d\TC (X)\le n$ if and only if the fibration $$\mathcal B_{n+1}(\bar p):P(X)_{n+1}\to X\times X$$ admits a section.
\end{thm}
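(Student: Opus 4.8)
The plan is to recognize that this characterization is a direct repackaging of the definition of $d\TC$, so the proof is a matter of matching up the two descriptions together with a check that the relevant topologies agree.

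First I would unwind $\mathcal B_{n+1}(\bar p)$ in the case $p=\bar p\colon P(X)\to X\times X$. Since $\bar p^{-1}(x,y)=P(x,y)$ by the definition of the end-points fibration, the total space is
$$P(X)_{n+1}=\{\mu\in\mathcal B_{n+1}(P(X))\mid \supp\mu\subset P(x,y)\ \text{for some}\ (x,y)\in X\times X\},$$
equipped with the subspace topology from $\mathcal B_{n+1}(P(X))$, and $\mathcal B_{n+1}(\bar p)(\mu)=(x,y)$ exactly when $\supp\mu\subset P(x,y)$. This is well defined: $\supp\mu\neq\varnothing$ for a probability measure, and distinct end-point pairs have disjoint path-preimages, so the pair $(x,y)$ with $\supp\mu\subset P(x,y)$ is unique whenever it exists.

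Then I would match sections of $\mathcal B_{n+1}(\bar p)$ with the maps occurring in the definition of $d\TC$. Unwinding that definition, $d\TC(X)\le n$ means there is a continuous $s\colon X\times X\to\mathcal B_{n+1}(P(X))$ with $s(x,y)\in\mathcal B_{n+1}(P(x,y))$ for all $(x,y)$. The requirement $s(x,y)\in\mathcal B_{n+1}(P(x,y))$ says precisely that $s(x,y)\in P(X)_{n+1}$ and that $\supp s(x,y)\subset P(x,y)$, i.e. $\mathcal B_{n+1}(\bar p)(s(x,y))=(x,y)$; and because $P(X)_{n+1}$ carries the subspace topology, $s$ is automatically continuous as a map into $P(X)_{n+1}$. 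Hence $s$ is a section of $\mathcal B_{n+1}(\bar p)$. Conversely, if $\sigma\colon X\times X\to P(X)_{n+1}$ is a section, then for each $(x,y)$ the support $\supp\sigma(x,y)$ lies in a unique fiber $P(x',y')$, and the section identity forces $(x',y')=(x,y)$, so $\sigma(x,y)\in\mathcal B_{n+1}(P(x,y))$; composing $\sigma$ with the inclusion $P(X)_{n+1}\hookrightarrow\mathcal B_{n+1}(P(X))$ then yields a map witnessing $d\TC(X)\le n$.

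The only point requiring care --- and the closest thing to an obstacle --- is the interplay of topologies: one needs that $\mathcal B_{n+1}(\bar p)$ is genuinely continuous (indeed a Hurewicz fibration, since $\bar p$ is), so that ``admits a section'' has the intended meaning, and that the subspace topology on $P(X)_{n+1}\subset\mathcal B_{n+1}(P(X))$ is the one implicitly used in the definition of $d\TC$. Both of these are already in hand from the earlier discussion of $\mathcal B_n(p)$, so no genuinely new work is needed; the argument is a definition-chase parallel to Theorem~\ref{chardcat}, and it is more immediate than the classical Theorems~\ref{original} and~\ref{original2}, whose content (Schwarz's sectional-category theorem) has no counterpart here because the defining data of $d\TC$ is already phrased sectionally rather than via open covers.
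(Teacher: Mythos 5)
Your unwinding is correct, and it is the natural argument: the paper does not prove Theorem~\ref{chardTC} here but cites~\cite{DJ}, and your identification of $P(X)_{n+1}$ (with its subspace topology) as exactly the measures supported in a single end-point fiber, together with the observation that the section identity $\mathcal B_{n+1}(\bar p)\circ s=\mathrm{id}$ is literally the condition $s(x,y)\in\mathcal B_{n+1}(P(x,y))$ from the definition of $d\TC$, is the intended content. You are also right that no Schwarz-type argument is needed, since $d\TC$ is already defined sectionally rather than via open covers.
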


We define the invariants $d\cat$ and $d\TC$ for  maps as follows. {\em The distributive topological complexity}, $d\TC(f)$ , of a map $f:X\to Y$ is the minimal number $n$ such that there is a continuous map
$s:X\times X\to\mathcal B_{n+1}(P(Y))$ satisfying $s(x,x')\in\mathcal B_{n+1}(P(f(x),f(x'))$ for all $(x,x')\in X\times X$.
Here $P(y,y')=\{f\in P(Y)\mid f(0)=y,\ f(1)=y'\}$.

{\em The distributional LS-category}, $d\cat(f)$,  is the minimal number $n$ such that there is a continuous map $$s:X\to \mathcal B_{n+1}(P(Y))$$ satisfying $s(x)\in \mathcal B_{n+1}(P(f(x),y_0))$ for all $x \in X$. We note that $$d\cat(1_X)=d\cat(X)\ \ \text{and}\ \ d\TC(1_X)=d\TC(X).$$ Clearly, for $f:X\to Y$,
$$d\cat(f)\le\min\{d\cat X, d\cat Y\}\  \text{and} \ d\TC(f)\le\min\{d\TC(X),d\TC(Y)\}.$$
The proof of Theorem~\ref{chardcat} and Theorem~\ref{chardTC} in~\cite{DJ} can be extended to the following.
\begin{prop}\label{liftD}
Let $f:X\to Y$ be a map. Then

(a)   $d\cat(f)\le n$ if and only if $f$ admits a lift with respect to $\mathcal B_{n+1}(p_0^Y)$;

(b) $d\TC(f)\le n$ if and only if $f\times f$  admits a lift with respect to $\mathcal B_{n+1}(\bar p^Y)$.
\end{prop}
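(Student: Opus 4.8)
The plan is to deduce both parts from Theorem~\ref{chardcat} and Theorem~\ref{chardTC} by repeating their proofs with the identity map replaced by $f$ in (a) and by $f\times f$ in (b); the proofs of those two theorems in~\cite{DJ} are essentially an unwinding of the fiberwise functor $\mathcal B_{n+1}(-)$, and the same unwinding applies here. First I would record the fiber identifications: for $y\in Y$ one has $(p_0^Y)^{-1}(y)=P(y,y_0)$, and for $(y,y')\in Y\times Y$ one has $(\bar p^Y)^{-1}(y,y')=P(y,y')$. Consequently an element $\mu$ of $P_0(Y)_{n+1}$ lying over $y$ — i.e. with $\mathcal B_{n+1}(p_0^Y)(\mu)=y$ — is exactly a measure with $\supp\mu\subset P(y,y_0)$, that is, an element of $\mathcal B_{n+1}(P(y,y_0))$; and similarly $\mu\in P(Y)_{n+1}$ over $(y,y')$ is exactly an element of $\mathcal B_{n+1}(P(y,y'))$.

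For (a) I would then check that the two data sets coincide. Given a lift $\tilde f\colon X\to P_0(Y)_{n+1}$ of $f$ along $\mathcal B_{n+1}(p_0^Y)$, composing with the inclusion $P_0(Y)_{n+1}\hookrightarrow\mathcal B_{n+1}(P(Y))$ yields a continuous $s\colon X\to\mathcal B_{n+1}(P(Y))$ with $s(x)\in\mathcal B_{n+1}(P(f(x),y_0))$ for all $x$, which is precisely a map witnessing $d\cat(f)\le n$. Conversely, given such an $s$, the support condition $\supp s(x)\subset P(f(x),y_0)\subset P_0(Y)$ shows that $s$ factors through $P_0(Y)_{n+1}$, and its corestriction is a lift of $f$ along $\mathcal B_{n+1}(p_0^Y)$. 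For (b) I would run the identical argument with $\bar p^Y\colon P(Y)\to Y\times Y$, with $f\times f$, and with $P(Y)_{n+1}$ in place of $p_0^Y$, $f$, and $P_0(Y)_{n+1}$: a lift $g\colon X\times X\to P(Y)_{n+1}$ of $f\times f$ assigns to $(x,x')$ a measure supported in $(\bar p^Y)^{-1}(f(x),f(x'))=P(f(x),f(x'))$, and conversely, so lifts of $f\times f$ along $\mathcal B_{n+1}(\bar p^Y)$ correspond exactly to the maps $s$ in the definition of $d\TC(f)\le n$.

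I do not expect a serious obstacle here; the one step that requires attention, and that I regard as the only non-formal point, is the continuity bookkeeping in the passage between a map into $\mathcal B_{n+1}(P(Y))$ whose image happens to lie in $P_0(Y)_{n+1}$ (resp. $P(Y)_{n+1}$) and its corestriction. This rests on the facts that $P_0(Y)_{n+1}$ and $P(Y)_{n+1}$ carry the subspace topology and that $\mathcal B_{n+1}$ sends the closed embedding $P_0(Y)\hookrightarrow P(Y)$ to a closed embedding $\mathcal B_{n+1}(P_0(Y))\hookrightarrow\mathcal B_{n+1}(P(Y))$ — which holds for either topology on $\mathcal B_{n+1}$ used in the paper, the quotient topology of~\cite{KK} or the Levy--Prokhorov metric — so that continuity into the subspace is equivalent to continuity into the ambient space together with the image constraint. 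Once this is in place, the remaining details of the proofs of Theorem~\ref{chardcat} and Theorem~\ref{chardTC} in~\cite{DJ} transfer verbatim, and I would simply refer to those proofs for the rest.
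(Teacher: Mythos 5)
Your argument is correct and follows exactly the route the paper intends: the paper itself gives no proof beyond noting that the arguments for Theorems~\ref{chardcat} and~\ref{chardTC} in~\cite{DJ} extend, and your unwinding of the fiberwise $\mathcal B_{n+1}$ construction together with the support-condition identification is precisely that extension. The continuity/corestriction point you flag is a reasonable thing to check, but it is routine for either topology the paper allows, and the rest is the same bookkeeping.
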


We note that in both fibrations $p_0$ and $\bar p$ the fiber is homotopy equivalent to the loop space $\Omega X$.

\subsection{Lower bounds}
Let $R$ be a commutative ring and $f:X\to Y$ be a map.
The {\em $R$-cup-length}t $c\ell_R(f^*)$ of $f$ is by definition the
maximal $k$ such that $$f^*(\alpha_1\smile\dots\smile \alpha_k)\ne 0$$ for $\alpha_i\in H^{n_i}(Y;R)$, $n_i>0$,  $i=1,\dots, k$.
The maximal such $k$ when $\alpha_i\in  H^{n_i}(Y;R_i)$ where $R_i$ is an arbitrary $\pi_1(Y)$-module is called the {\em cup-length} of $f$ and is denoted by $c\ell(f^*)$. We note that in this case  the product of $\alpha_i$  lives in $H^*(Y;R_1\otimes\cdots\otimes R_k)$.

The following proposition has a standard proof (see Exercise 1.16(3) in\cite{CLOT}).

\begin{prop}\label{cuplengthlowbound}
For any map $f:X\to Y$, $$\cat(f)\ge c\ell(f^*).$$
\end{prop}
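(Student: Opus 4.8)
The plan is to reduce the statement to the well-known lower bound for the LS-category of a space in terms of cup-length, applied to the mapping pair $(f\colon X\to Y)$, by exploiting the characterization of $\cat(f)$ via a covering of $X$ by open sets on which $f$ is null-homotopic. Concretely, suppose $\cat(f)=n$, so there is an open cover $X=U_0\cup\cdots\cup U_n$ with each restriction $f|_{U_i}\colon U_i\to Y$ null-homotopic. For a class $\alpha_i\in H^{n_i}(Y;R_i)$ with $n_i>0$, the composite $U_i\hookrightarrow X\xrightarrow{f} Y$ being null-homotopic forces $f^*\alpha_i$ to restrict to $0$ in $H^{n_i}(U_i;R_i)$; hence by the long exact sequence of the pair $(X,U_i)$ the class $f^*\alpha_i$ lifts to a class $\beta_i\in H^{n_i}(X,U_i;R_i)$ mapping to $f^*\alpha_i$ under the restriction $H^{n_i}(X,U_i;R_i)\to H^{n_i}(X;R_i)$.

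The key step is the standard relative cup-product argument: the cup product
$$\beta_0\smile\beta_1\smile\cdots\smile\beta_n \in H^{n_0+\cdots+n_n}\bigl(X,\;U_0\cup U_1\cup\cdots\cup U_n;\;R_0\otimes\cdots\otimes R_n\bigr)$$
lives in a group which is zero, since $U_0\cup\cdots\cup U_n=X$ and $H^*(X,X;-)=0$. Because the restriction map $H^*(X,\bigcup U_i;-)\to H^*(X;-)$ sends the relative product $\beta_0\smile\cdots\smile\beta_n$ to the absolute product $f^*\alpha_0\smile\cdots\smile f^*\alpha_n=f^*(\alpha_0\smile\cdots\smile\alpha_n)$, we conclude that any product of more than $n$ positive-degree classes pulls back to $0$ under $f^*$. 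Equivalently, $c\ell(f^*)\le n=\cat(f)$, which is the claim. (The case $f=1_X$ recovers the classical inequality $\cat(X)\ge c\ell(\mathrm{id}_X^*)$; the version with twisted coefficients $R_i$ is handled identically, keeping track of the $\pi_1(Y)$-module structures pulled back along $f$, as in Exercise 1.16(3) of \cite{CLOT}.)

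The main obstacle, such as it is, is purely bookkeeping: one must be careful that the relative classes $\beta_i$ exist and that the relative cup product is well-defined with the correct target group when the coefficient modules $R_i$ differ and are local systems rather than constant. This is handled by the naturality of the relative cup product $H^p(X,A;M)\otimes H^q(X,B;N)\to H^{p+q}(X,A\cup B;M\otimes N)$ together with the observation, already recorded in the excerpt, that the product of the $\alpha_i$ lives in $H^*(Y;R_1\otimes\cdots\otimes R_k)$; pulling back along $f$ and using the lifts $\beta_i$ gives the relative product over $X$. No genuinely new ideas are required beyond the classical Lusternik–Schnirelmann estimate, so I would keep the write-up short and cite \cite{CLOT} for the routine details.
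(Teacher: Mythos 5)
Your argument is correct and is exactly the standard relative cup-product argument that the paper invokes by citing Exercise 1.16(3) of \cite{CLOT}: take a cover $X=U_0\cup\cdots\cup U_n$ with each $f|_{U_i}$ null-homotopic, lift $f^*\alpha_i$ to $H^{n_i}(X,U_i;R_i)$, and observe that the relative product lands in $H^*(X,X;-)=0$. The paper gives no proof beyond this citation, so your write-up matches the intended argument; the only point worth flagging explicitly in a final version is that the vanishing of $(f|_{U_i})^*\alpha_i$ with twisted coefficients uses that a null-homotopic map factors up to homotopy through a point, where positive-degree cohomology with any local system vanishes.
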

For $f=1_X$ this is the standard cup-length lower bound for $\cat(X)$.
We recall that $c\ell B\Gamma=\cd(\Gamma)$. Namely, if $\cd(\Gamma)=n$
then $\beta_\Gamma^n\ne 0$ where $\beta_\Gamma\in H^1(\Gamma,I(\Gamma))$ is the Berstein-Schwarz class of $\Gamma$~\cite{DR}.

\

Let $\delta_m:X\to SP^n(X)$ denote the diagonal embedding into the symmetric product of $m$ copies of $X$, $SP^m(X)=X^m/S_m$,  where $S_m$ is the $m$-th symmetric group.  We denote by $[x_1,\dots,x_k]$ the $S_m$-orbit of $(x_1,\dots,x_k)$. 

In~\cite{DJ} we proved that the rational cup-length of a CW-complex $X$ is a lower bound for $d\cat X$. The proof is based on the following
\begin{lemma}\label{SP}
If $d\cat X<n$ then there is an open cover $\{U_i\}_{i=1}^n$ of $X$ such that
each map $\delta_i:U_i\to SP^i(X)$ is null-homotopic.
\end{lemma}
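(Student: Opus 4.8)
The plan is to unwind the definition of $d\cat X < n$ via the section characterization in Theorem~\ref{chardcat}, and then translate the fiberwise measure-valued section into a statement about the symmetric products $SP^i(X)$. First I would invoke Theorem~\ref{chardcat}: the hypothesis $d\cat X < n$, i.e. $d\cat X \le n-1$, means the fibration $\mathcal B_n(p_0) : P_0(X)_n \to X$ admits a section $s : X \to P_0(X)_n$. Thus for each $x \in X$ we get a probability measure $s(x) = \sum_{j} \lambda_j(x)\,\phi_j$ supported on at most $n$ paths $\phi_j \in P_0(X)$ from $x$ to the basepoint $x_0$, with the weights and supports varying continuously in the Levy--Prokhorov (or quotient) topology.

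Next I would filter $X$ by how many atoms the measure $s(x)$ genuinely needs. For $i = 1,\dots,n$ let $U_i$ be the set of points $x$ where some atom of $s(x)$ has weight $> 1/i$ (equivalently, strictly more than $i-1$ atoms would be impossible to avoid — one makes this precise by an open condition on the ordered list of weights). Since a probability measure with at most $n$ atoms always has an atom of weight $\ge 1/n$, and the weight function is continuous, the sets $U_i$ form an open cover of $X$; shrinking if necessary one arranges that on $U_i$ the measure $s(x)$ is, up to a continuous deformation, carried by exactly the atoms of weight exceeding the $i$-th threshold, of which there are at most $i$. Evaluating each of these $\le i$ paths at time $0$ (all equal to $x$) is not what we want; instead we evaluate the whole configuration of $\le i$ endpoints-at-time-$0$... — the right move is: the $\le i$ supporting paths $\phi_{j_1},\dots,\phi_{j_i}$ all start at $x$, so the unordered tuple of their values at a fixed time $t$ gives a point of $SP^i(X)$, and at $t=0$ this unordered tuple is $[x,x,\dots,x] = \delta_i(x)$. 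Letting $t$ run from $0$ to $1$ contracts $\delta_i|_{U_i}$ to the constant map at $[x_0,\dots,x_0]$, because at $t=1$ every supporting path is at $x_0$. This exhibits $\delta_i : U_i \to SP^i(X)$ as null-homotopic.

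The main technical obstacle is making the decomposition into the $U_i$ and the choice of "the $\le i$ dominant atoms" continuous: the assignment $x \mapsto \{\text{atoms of } s(x) \text{ of weight} > c\}$ is not continuous where an atom's weight crosses the threshold $c$, and on the overlaps atoms can split or merge. I would handle this by working with the preimage in $Symm^{\ast i}(P_0(X))$ under the quotient map $q$ with compact contractible fibers (as recalled in the introduction), choosing the cover $\{U_i\}$ so that over $U_i$ the section lifts to a map into the symmetric join $Symm^{\ast i}$ of exactly $i$ copies of $P_0(X)$, after which applying the time-evaluation $\mathrm{ev}_t$ and passing to $SP^i(X)$ is manifestly continuous in $(x,t)$. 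This is exactly the kind of bookkeeping that Theorem~\ref{chardcat} and the structure of $\mathcal B_n$ were set up to support, so I expect it to go through, but it is where the care is needed; everything after the lift is a straightforward homotopy via sliding the evaluation time.
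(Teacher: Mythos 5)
Your overall strategy — extract for each $x\in U_i$ at most $i$ dominant supporting paths of $s(x)$, form their unordered time-$t$ evaluations in $SP^i(X)$, and let $t$ run from $0$ to $1$ — is the right skeleton, and you correctly flag the continuity of the selection as the crux. But the proposal has a genuine gap precisely there, and the mechanism you sketch for closing it does not work. The sorted-weight functions $\lambda_j$ are \emph{not} continuous on $\mathcal B_n(P_0(X))$ (in either the quotient or the Levy--Prokhorov topology): when two distinct atoms of comparable mass collide in $P_0(X)$, the measure converges in $\mathcal B_n$ to one with fewer atoms, so $\lambda_1$ jumps up and the lower $\lambda_j$ jump down. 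Thus your sets $U_i=\{x:\lambda_1(s(x))>1/i\}$ are open only by lower semicontinuity of $\lambda_1$, they are nested, and $U_n=\{\lambda_1>1/n\}$ need not be all of $X$ (a measure with $n$ equal atoms has $\lambda_1=1/n$ exactly) — so they need not cover. More seriously, $\{x: s(x)\in\mathcal B_i\}$ is a \emph{closed} subset of $X$, so no open $U_i$ can be chosen with $s|_{U_i}$ landing in $\mathcal B_i(P_0(X))$, and the local lift to $\mathrm{Symm}^{*i}(P_0(X))$ you propose over $U_i$ therefore has nothing to lift. Finally, there is no natural map $\mathrm{Symm}^{*i}(X)\to SP^i(X)$ (a summand of weight $0$ carries no point), so passing from time-$t$ evaluation to $SP^i(X)$ is not automatic even where a lift to $\mathrm{Symm}^{*i}$ did exist.

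The fix is to lift the section \emph{globally} and to $\mathrm{Symm}^{*n}$, not locally to $\mathrm{Symm}^{*i}$. The fiberwise quotient $q:\mathrm{Symm}^{*n}_X(P_0(X))\to P_0(X)_{n}$ is a fiberwise map between Hurewicz fibrations over $X$ whose restriction to each fiber is the proper cell-like map of Proposition~\ref{h.e.}, hence a homotopy equivalence; by Dold's theorem $q$ is a fiber homotopy equivalence, and composing $s$ with a fiberwise homotopy inverse gives a genuine section $\tilde s:X\to\mathrm{Symm}^{*n}_X(P_0(X))$. On $\mathrm{Symm}^{*n}$, unlike on $\mathcal B_n$, there is no merging of repeated summands, so the sorted-weight map to $\Delta^{n-1}/S_n$ \emph{is} continuous. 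One then sets $U_i=\{x:\lambda_i(\tilde s(x))>\lambda_{i+1}(\tilde s(x))\}$ with $\lambda_{n+1}:=0$; these are open, they cover $X$ because the weights are weakly decreasing, sum to $1$, and $\lambda_{n+1}=0$, and over the open locus $\{\lambda_i>\lambda_{i+1}\}$ the top-$i$-summands projection $\mathrm{Symm}^{*n}(P_0(X))\to SP^i(P_0(X))$ is well defined and continuous (the index set of the $i$ largest weights is locally constant there). Composing with time-$t$ evaluation $SP^i(P_0(X))\to SP^i(X)$ yields exactly the null-homotopy of $\delta_i|_{U_i}$ you describe.
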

Since the natural map $\delta^k_{nk}:SP^k(X)\to SP^{nk}(X)$ defined as $$\delta^k_{nk}([x_1,\dots x_k])=[x_1,\dots, x_k,x_1,\dots, x_k,\cdots,x_1,\dots, x_k]$$
takes diagonal to diagonal,
we obtain the following.

\begin{thm}\label{diagonal}\cite{DJ}
If $d\cat X<n$, then $d\cat(X)\ge \cat(\delta_{n!})$.
\end{thm}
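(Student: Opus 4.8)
The plan is to combine Lemma~\ref{SP} with a pigeonhole/amalgamation argument on the covering sets, followed by an application of the diagonal-preserving maps $\delta^k_{nk}$. Suppose $d\cat X < n$. By Lemma~\ref{SP} there is an open cover $\{U_i\}_{i=1}^n$ of $X$ such that each diagonal map $\delta_i\colon U_i\to SP^i(X)$ is null-homotopic. The goal is to produce a single open cover of $X$ by $\cat(\delta_{n!})+1$ sets on each of which $\delta_{n!}\colon X\to SP^{n!}(X)$ is null-homotopic, which by the definition of $\cat$ for maps gives $\cat(\delta_{n!})\le d\cat X<n$, i.e. the asserted inequality $d\cat(X)\ge\cat(\delta_{n!})$ — wait, the inequality wanted is $d\cat(X)\ge\cat(\delta_{n!})$, so more precisely I want to show that the hypothesis $d\cat X<n$ forces $\cat(\delta_{n!})<n$ as well; since this holds for every $n>d\cat X$, taking $n=d\cat X+1$ yields $\cat(\delta_{n!})\le d\cat X$ for that $n$, which is exactly the statement (with $n!=(d\cat X+1)!$).

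First I would observe that for each $i\le n$, the set $U_i$ — on which $\delta_i$ is null-homotopic — also has $\delta_{n!}|_{U_i}$ null-homotopic. The key is the factorization: $\delta_{n!} = \delta^i_{n!}\circ\delta_i$, because repeating a diagonal $n!/i$ times (which is an integer since $i\mid n!$ for $i\le n$) sends $[x,\dots,x]$ ($i$ copies) to $[x,\dots,x]$ ($n!$ copies). Since $\delta_i|_{U_i}$ is null-homotopic and $\delta^i_{n!}$ is continuous, the composite $\delta_{n!}|_{U_i}=\delta^i_{n!}\circ(\delta_i|_{U_i})$ is null-homotopic. Thus the same cover $\{U_i\}_{i=1}^n$ witnesses that $\delta_{n!}\colon X\to SP^{n!}(X)$ is null-homotopic on each of $n$ open sets covering $X$, which by Proposition-style definition of $\cat$ of a map gives $\cat(\delta_{n!})\le n-1 < n$.

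The main (and really only) subtlety is the divisibility bookkeeping: one must check that $i$ divides $n!$ for every $i\in\{1,\dots,n\}$ so that $\delta^i_{n!}$ is defined as an honest iterated diagonal $SP^i(X)\to SP^{n!}(X)$, and that this map is diagonal-preserving and continuous — both of which are immediate from the displayed formula for $\delta^k_{nk}$ given just before the theorem statement (with $k=i$ and $nk$ replaced by $n!=i\cdot(n!/i)$). After that, the argument is purely formal: null-homotopy is preserved under post-composition with continuous maps, and a cover by $n$ sets each supporting a null-homotopy of $f$ means $\cat(f)\le n-1$. I do not expect any real obstacle here; the content of the theorem is entirely carried by Lemma~\ref{SP}, and this proof is the short combinatorial wrapper around it.
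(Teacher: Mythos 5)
Your approach is the same as the paper's: feed \lemref{SP} into the factorization $\delta_{n!}=\delta^{i}_{n!}\circ\delta_i$ given by the diagonal-preserving maps $\delta^k_{nk}$, and conclude that the cover from \lemref{SP} is categorical for $\delta_{n!}$. That is exactly what the sentence before the theorem is gesturing at, so the idea is right.

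However, as written your argument has a genuine gap in the bookkeeping of parameters. Applying \lemref{SP} with the given $n$ produces a cover of $X$ by $n$ open sets, and your factorization then gives only $\cat(\delta_{n!})\le n-1$. The theorem claims $\cat(\delta_{n!})\le d\cat X$, which is strictly stronger whenever $n>d\cat X+1$. You notice the tension (``wait, the inequality wanted\dots'') and retreat to the single case $n=d\cat X+1$, but the theorem is quantified over all $n$ with $d\cat X<n$, and you never explain why that one case implies the rest. The repair is easy and hides in the same factorization machinery: set $m=d\cat X$ and apply \lemref{SP} with parameter $m+1$ rather than $n$ (which is legitimate since $d\cat X<m+1$). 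This yields a cover $\{U_i\}_{i=1}^{m+1}$ with each $\delta_i|_{U_i}$ null-homotopic. For every such $i$ we have $i\le m+1\le n$, hence $i\mid n!$, and $\delta_{n!}|_{U_i}=\delta^i_{n!}\circ(\delta_i|_{U_i})$ is null-homotopic. That gives $\cat(\delta_{n!})\le m=d\cat X$ for every $n>d\cat X$, which is the full statement. (Equivalently, first establish $\cat(\delta_{(m+1)!})\le m$ and then observe $\cat(\delta_{n!})\le\cat(\delta_{(m+1)!})$ because $\delta_{n!}=\delta^{(m+1)!}_{n!}\circ\delta_{(m+1)!}$; but that is the same fix stated differently.) The way you currently invoke \lemref{SP}, you simply get too many sets, and the conclusion you draw does not say what the theorem says.
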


Then Proposition~\ref{cuplengthlowbound}  implies the following.
\begin{cor}\label{catdiag}
If $d\cat X<n$, then $d\cat(X)\ge c\ell(\delta_{n!}^*)$.
\end{cor}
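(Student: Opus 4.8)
The plan is to chain together the three facts already assembled in the excerpt. First I would invoke Theorem~\ref{diagonal}, which gives that $d\cat X < n$ implies $d\cat(X) \ge \cat(\delta_{n!})$; this is the step that converts a statement about the distributional category into a statement about the ordinary category of a concrete map, namely the iterated diagonal $\delta_{n!}\colon X \to SP^{n!}(X)$. Then I would apply Proposition~\ref{cuplengthlowbound}, the cup-length lower bound for the LS-category of a map, specialized to $f = \delta_{n!}$, to obtain $\cat(\delta_{n!}) \ge c\ell(\delta_{n!}^*)$.

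Putting these two inequalities in series yields, under the hypothesis $d\cat X < n$, the chain
$$
d\cat(X) \ge \cat(\delta_{n!}) \ge c\ell(\delta_{n!}^*),
$$
which is exactly the claimed conclusion $d\cat(X) \ge c\ell(\delta_{n!}^*)$. So the corollary is essentially a formal composite of the preceding theorem and proposition, and the proof is one or two lines.

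The only point that requires a moment's care is that Proposition~\ref{cuplengthlowbound} is stated for an arbitrary map $f\colon X \to Y$, so one must simply note that it applies verbatim with $Y = SP^{n!}(X)$ and $f = \delta_{n!}$; no hypothesis beyond "is a map" is needed, and the cup-length here is the one allowing coefficients in arbitrary $\pi_1$-modules, matching the definition used in $c\ell(\delta_{n!}^*)$. There is no real obstacle: the substantive work has already been done in establishing Lemma~\ref{SP}, Theorem~\ref{diagonal}, and Proposition~\ref{cuplengthlowbound}. If anything, the mild subtlety is bookkeeping the strict inequality hypothesis $d\cat X < n$, which is inherited unchanged from Theorem~\ref{diagonal} and simply carried along into the statement of the corollary.
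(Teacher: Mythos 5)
Your proof is correct and matches the paper's own route exactly: the corollary is derived by applying Proposition~\ref{cuplengthlowbound} to the map $\delta_{n!}$ and composing with Theorem~\ref{diagonal}. Nothing further is needed.
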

Let $x_0\in X$ be a base point $x_0$. Then there is the inclusion $\xi_k:X\to SP^k(X)$ defined as $\xi_k(x)=[x,x_0,\dots,x_0]:=S_m(x,x_0,\dots x_0)$.
Moreover, there are inclusions $\xi_{k+i}^k:SP^k(X)\to SP^{k+i}(X)$ defined as $\xi_{k+i}^k([x_1,\dots, x_k])=[x_1,\dots, x_k, x_0,\dots, x_0]$. Then $\xi_k=\xi_k^1$
Note that $\xi_{k+i+j}^{k+i}\circ\xi^k_{k+i}=\xi^k_{k+i+j}$. Thus, the direct limit $SP^\infty(X)=\lim_{\rightarrow}SP^k(X)$ is well-defined. The space $SP^\infty(X)$ is known as free abelian topological monoid generated by $X$. The Dold-Thom theorem states that for a CW-complex $X$ the induced map for homotopy groups $$(\xi_\infty)_*:\pi_i(X)\to \pi_i(SP^\infty(X))=H_i(X)$$ is the Hurewicz homomorphism~\cite{DT}. This holds true for $\xi_k$ and $i< k$.
The following is well-known.
\begin{prop}\label{times m}
For the homology induced homomorphisms $$(\delta_k)_*=k(\xi_k)_*.$$
\end{prop}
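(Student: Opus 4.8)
The plan is to identify $\delta_k$, after stabilizing into the free abelian topological monoid $SP^\infty(X)$, with a $k$-fold sum of $\xi_k$, and then to use that the $k$-th power map of a monoid is multiplication by $k$ on homotopy groups. Write $s_k\colon SP^k(X)\to SP^\infty(X)$ for the stabilization map $[x_1,\dots,x_k]\mapsto[x_1,\dots,x_k,x_0,x_0,\dots]$, so that $s_{k+i}\circ\xi_{k+i}^k=s_k$ and, in particular, $s_k\circ\xi_k=\xi_\infty$, since adjoining copies of the base point $x_0$ is the same as adding the unit of the monoid. Denoting the monoid operation of $SP^\infty(X)$ additively, one has for $x\in X$
\[
s_k(\delta_k(x))=[x,\dots,x,x_0,x_0,\dots]=\xi_\infty(x)+\dots+\xi_\infty(x)\quad(k\text{ summands}),
\]
so that $s_k\circ\delta_k=\psi^k\circ\xi_\infty$, where $\psi^k\colon SP^\infty(X)\to SP^\infty(X)$, $\psi^k(a)=a+\dots+a$ ($k$ times), is the $k$-th power map.

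Next I would invoke the elementary fact that for a topological monoid $M$ with unit the $k$-th power map induces multiplication by $k$ on every homotopy group $\pi_i(M)$. Indeed, for $f\colon S^i\to M$ the composite $\psi^k\circ f$ factors as the $k$-fold multiplication of $M$ applied to $f\times\dots\times f$ precomposed with the $k$-fold diagonal of $S^i$; restricting the middle map to the coordinate copies of $S^i$ exhibits $\psi^k\circ f$ as the sum, in the $H$-space sense, of $k$ copies of $f$, which for $i\ge 1$ equals $k[f]$ in $\pi_i(M)$. Since $SP^\infty(X)$ is a (strictly associative, strictly unital) abelian topological monoid, this gives $(\psi^k)_*=k\cdot\id$ on $\pi_i(SP^\infty(X))$.

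Finally, applying $\pi_i$ to the two relations above and using that $(s_k)_*\colon \pi_i(SP^k(X))\to\pi_i(SP^\infty(X))$ is an isomorphism for $i<k$ — the range in which the Dold--Thom identification $\pi_i(SP^k(X))=H_i(X)$ holds and $(\xi_k)_*$ is the Hurewicz homomorphism — one gets, for $i<k$,
\[
(\delta_k)_*=(s_k)_*^{-1}(\psi^k)_*(\xi_\infty)_*=k\,(s_k)_*^{-1}(\xi_\infty)_*=k\,(\xi_k)_*,
\]
which is the asserted equality. No step is genuinely hard; the only points needing care are the bookkeeping in the first display (tracking the adjoined base points, so that the stabilization of the diagonal becomes a $k$-fold monoid sum) and the $H$-space lemma of the second paragraph, which, however, requires only that $SP^\infty(X)$ is an honest monoid, so that no homotopy-coherence subtleties intervene.
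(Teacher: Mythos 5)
The paper gives no argument for this proposition; it simply records it as ``well-known'' immediately after the discussion of the Dold--Thom theorem, so there is no internal proof to compare you against. Your argument is correct and is, in my view, the natural one the author has in mind: the identities $s_k\circ\delta_k=\psi^k\circ\xi_\infty$ and $\xi_\infty=s_k\circ\xi_k$ hold on the nose, $\psi^k$ is multiplication by $k$ on the homotopy groups of the strict abelian topological monoid $SP^\infty(X)$ by the Eckmann--Hilton mechanism, and $(s_k)_*$ is injective on $\pi_i$ in the stable range $i<k$, which the paper itself invokes in the preceding paragraph. One small point worth flagging: your chain of identifications proves the equality $(\delta_k)_*=k(\xi_k)_*$ for maps on \emph{homotopy} groups in the range $i<k$, whereas the proposition is phrased in terms of \emph{homology}. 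The two do not coincide a priori (and the relation cannot be pushed through $H_*(SP^\infty X)$ in general, since $(\psi^k)_*$ is not multiplication by $k$ there, e.g.\ on $H_*(\mathbb{CP}^\infty)$); but the only place the paper uses the proposition is the passage to the fundamental group in the proof of Theorem~\ref{main 1}, where $\pi_1(SP^i(L^n_p))$ is abelian and hence equals $H_1$, so your statement delivers exactly what is needed. It would be cleaner to say explicitly that you are proving the statement for $\pi_i$, $1\le i<k$, which is how it is actually applied.
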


\begin{prop}\label{BerSchw}
Let $L^n_p$ be the $n$-skeleton of the infinite lens space $L^\infty_p=S^\infty/\mathbb Z_p$ with respect to the standard CW-complex structure. Then  $\cat(\xi_m)\ge n$
for the map $\xi_m:L^n_p\to SP^m(L^n_p)$ for all $m$.
\end{prop}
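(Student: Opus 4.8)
The plan is to use the cup-length lower bound for $\cat$ of a map (Proposition~\ref{cuplengthlowbound}), so it suffices to exhibit cohomology classes $\alpha_1,\dots,\alpha_n$ of positive degree on $SP^m(L^n_p)$ whose cup product pulls back to a nonzero class in $H^n(L^n_p)$ under $\xi_m$. The natural source of such classes is the mod-$p$ cohomology of $SP^m$ of a space, which by the Dold-Thom theorem (in the stable range used here, $i<k$) and the structure of $SP^\infty$ is built from the reduced homology of $L^n_p$ together with its Steenrod operations; concretely, the fundamental classes $u\in H^1(L^n_p;\mathbb Z_p)$ and $v=\beta u\in H^2(L^n_p;\mathbb Z_p)$ (the Bockstein) generate $H^*(L^n_p;\mathbb Z_p)$ as a truncated polynomial algebra. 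First I would recall that $H^*(SP^m(X);\mathbb Z_p)$ surjects onto (a quotient of) the free graded-commutative algebra on $\widetilde H^*(X;\mathbb Z_p)$ in the range of degrees $\le n$ once $m$ is large, and more importantly that for \emph{every} $m\ge 1$ the map $\xi_m:X\to SP^m(X)$ induces a surjection on $\mathbb Z_p$-cohomology in degrees $\le n$ — this is the key point, and it follows because $\xi_\infty$ realizes the Hurewicz map and the cohomology of $SP^\infty(X)$ maps onto $H^*(X)$, while $\xi_m$ factors the low-degree part of this.

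Granting that $\xi_m^*:H^k(SP^m(L^n_p);\mathbb Z_p)\to H^k(L^n_p;\mathbb Z_p)$ is onto for $k\le n$, I would choose classes $\tilde u\in H^1(SP^m(L^n_p);\mathbb Z_p)$ and $\tilde v\in H^2(SP^m(L^n_p);\mathbb Z_p)$ with $\xi_m^*\tilde u=u$ and $\xi_m^*\tilde v=v$. If $n=2s+1$ is odd, the product $\tilde u\smile\tilde v^{\,s}$ has positive-degree factors (total degree $n$) and pulls back to $u v^s\ne 0$ in $H^n(L^n_p;\mathbb Z_p)$, giving a nonzero product of $s+1\le n$ positive-degree classes; if $n=2s$ is even, I take $\tilde v^{\,s}$, which pulls back to $v^s\ne 0$. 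Either way one needs $n$ factors, not just $s+1$, to conclude $\cat(\xi_m)\ge n$, so the honest argument must produce a product of length exactly $n$: this is arranged by using $n$ one-dimensional classes, i.e. one takes the class $\tilde u$ together with the $s$ classes coming from $\beta$ but repackaged via the transfer/power operations so that $u=\xi_m^*(\text{deg }1)$ appears and $v^s$ is itself a product of $1$-dimensional classes after pulling back — since $v=\beta u$ is \emph{not} a product of $1$-classes in $H^*(L^n_p)$, one instead invokes Corollary~\ref{catdiag}-style reasoning is not available here, so the clean route is: $\cat(\xi_m)\ge c\ell(\xi_m^*)$ and $c\ell(\xi_m^*)\ge c\ell(\text{id}_{L^n_p}^*)=\cat(L^n_p)=n$ because $\xi_m^*$ is surjective on $H^*(-;\mathbb Z_p)$ in degrees $\le n$ and hence the maximal nonzero product length cannot drop. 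That last sentence is the whole argument in compressed form: surjectivity of $\xi_m^*$ through degree $n$ forces $c\ell(\xi_m^*)\ge c\ell_{\mathbb Z_p}(L^n_p)=n$, and Proposition~\ref{cuplengthlowbound} finishes it.

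I expect the main obstacle to be proving that $\xi_m^*$ is surjective on mod-$p$ cohomology in degrees $\le n$ for \emph{all} $m\ge 1$, not merely for $m$ large or $m=\infty$. For $m=\infty$ this is immediate from Dold-Thom, and for the range $i<m$ one can cite the remark in the excerpt that Dold-Thom "holds true for $\xi_k$ and $i<k$"; the case of small $m$ (when $m\le n$) requires a separate argument, e.g. factoring $\xi_m$ through $\xi_\infty$ via $\xi_\infty^m:SP^m\to SP^\infty$ and checking that the relevant low-degree generators of $H^*(L^n_p;\mathbb Z_p)$ — which are concentrated in degrees $1$ and $2$ — already lie in the image of $(\xi_m)^*$ because they are detected on the $1$- and $2$-skeleta where $SP^m$ and $SP^\infty$ agree for $m\ge 2$ (and for $m=1$ the statement is trivial). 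Assembling these cases is routine but must be done carefully; once surjectivity through degree $n$ is in hand, the cup-length comparison and the appeal to Proposition~\ref{cuplengthlowbound} are immediate.
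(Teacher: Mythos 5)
Your approach has a genuine gap at the final step. You reduce to showing that $\xi_m^*$ is surjective on $H^*(-;\mathbb Z_p)$ in degrees $\le n$, and then claim this forces $c\ell(\xi_m^*)\ge c\ell_{\mathbb Z_p}(L^n_p)=n$. That last equality is false for odd $p$: in $H^*(L^n_p;\mathbb Z_p)=\Lambda(u)\otimes\mathbb Z_p[v]/(\text{truncation})$ with $|u|=1$, $|v|=2$, $v=\beta u$, the class $u$ squares to zero and $v$ is not decomposable, so the longest nonzero product of positive-degree classes is $v^s$ or $uv^s$, giving $c\ell_{\mathbb Z_p}(L^n_p)=\lceil n/2\rceil$, not $n$. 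Already for $p=3$, $n=2$, you get $c\ell_{\mathbb Z_3}(L^2_3)=1<2$. You noticed mid-argument that you need $n$ factors rather than $s+1$, but the ``clean route'' you then propose silently reinstates the false identity. No amount of surjectivity of $\xi_m^*$ can repair this, because the deficiency is in the target algebra $H^*(L^n_p;\mathbb Z_p)$ itself.

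The paper sidesteps this entirely by working with twisted coefficients, which Proposition~\ref{cuplengthlowbound} permits (it is stated for $c\ell$, not $c\ell_R$). The Berstein--Schwarz class $\beta_{\mathbb Z_p}\in H^1(\mathbb Z_p;I(\mathbb Z_p))$ is a one-dimensional class whose $n$-th power is nonzero for every $n$ (since $\cd(\mathbb Z_p)=\infty$), and its restriction $i^*(\beta_{\mathbb Z_p}^n)$ to the $n$-skeleton $L^n_p$ is still nonzero because the inclusion $i:L^n_p\hookrightarrow L^\infty_p$ is $n$-connected. The crucial observation, which you almost reached, is the Dold--Thom consequence you cite: since $\pi_1(L^n_p)=\mathbb Z_p$ is abelian, $\xi_m$ induces an isomorphism on $\pi_1$, so $i$ factors up to homotopy as $g\circ\xi_m$ for some $g:SP^m(L^n_p)\to L^\infty_p$. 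Then $\xi_m^*\bigl((g^*\beta_{\mathbb Z_p})^n\bigr)=i^*(\beta_{\mathbb Z_p}^n)\ne 0$, exhibiting a nonzero product of $n$ classes of degree $1$ and giving $c\ell(\xi_m^*)\ge n$ at once. Your factorization idea was the right ingredient; you just need to apply it to the twisted class $\beta_{\mathbb Z_p}$ rather than to $u$ and $v$ in mod-$p$ cohomology.
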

\begin{proof}
We show that $\cat(\xi_m)\ge c\ell(\xi_m^*)$.
Since $\pi_1(L^n_p)$ is abelian, by the Dold-Thom theorem the map $\xi_m$
induces an isomorphism of the fundamental groups. Therefore the inclusion
$i:L^n_p\to L^\infty_p$ homotopically factors through $\xi_m$, i.e., there is a map
$g:SP^m(L^n_p)\to L^\infty_p$ such that $i$ is homotopic to $g\xi_m$.
Since $i^*(\beta^n_{\mathbb Z_p})\ne 0$, where $\beta_{\mathbb Z_p}$ is the Berstein-Schwarz class, we obtain $\xi_m^*((g^*\beta_{\mathbb Z_p})^n)\ne 0$.
\end{proof}

The standard lower bound for $\TC(X)$ is given in terms of zero divisor cup-length $zc\ell(H^*(X\times X;R))$~\cite{Fa1} which can be extended to a lower bound for maps $f:X\to Y$
as in~\cite{Sco} $$\TC(f)\ge zc\ell_R((f\times f)^*(H^*(Y\times Y;R))).$$

Similarly one can prove the following theorem and corollary(see~\cite{DJ})
\begin{thm}\label{diagonal2}
If $d\TC X<n$, then $d\TC(X)\ge \TC(\delta_{n!})$ for the diagonal inclusion $\delta_{n!}:X\times X\to SP^{n!}(X\times X)$.
\end{thm}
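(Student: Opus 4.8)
The plan is to mimic the proof of Theorem~\ref{diagonal} (the $d\cat$ statement) by replacing the $n$-fold "repeat" map $\delta^k_{nk}$ on symmetric products with its $X\times X$ analogue and carrying the argument through the characterization of $d\TC$ via sections of $\mathcal B_{n+1}(\bar p)$. First I would unwind the meaning of $d\TC X < n$, i.e. $d\TC X \le n-1$: by Theorem~\ref{chardTC} there is a section $s:X\times X\to P(X)_n$ of $\mathcal B_n(\bar p)$. The goal is to produce, from such a section, a covering of $X\times X$ by $n$ open sets $\{U_i\}_{i=1}^n$ such that the composite $X\times X \supset U_i \xrightarrow{\delta_i} SP^i(X\times X)$ is null-homotopic on each $U_i$ — this is the exact $d\TC$-analogue of Lemma~\ref{SP}. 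Here one uses that a measure $\mu=\sum_{j=1}^i \lambda_j\phi_j\in\mathcal B_i(P(X))$ lying over $(x,y)$ determines an $i$-tuple of paths (up to order) all with the same endpoints; evaluating, say, at the midpoint, or more robustly contracting each path to its endpoint, gives a canonical way to see the "barycentric stratum" $U_i=\{(x,y): |\supp s(x,y)| = i\}$-type decomposition. The standard argument (as in the proof that $\TC$ is bounded by sectional category) partitions $X\times X$ according to which of the $n$ barycentric coordinates of $s(x,y)$ is largest, giving $n$ open sets $U_i$, and on $U_i$ one extracts $i$ paths whose common endpoints are $(x,y)$, producing a map $U_i\to SP^i(P(X))$ over $X\times X$; composing with the endpoint projection $SP^i(P(X))\to SP^i(X\times X)$ and noting that the "contract each path" homotopy moves this into the diagonal stratum $SP^i(\Delta X)\cong X$, we conclude $\delta_i|_{U_i}$ is null-homotopic into $SP^i(\Delta X)$, hence $\delta_i|_{U_i}$ is null-homotopic in $SP^i(X\times X)$.

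Next, I would use the naturality of the "$n$-fold repeat" map. Let $D^i_{n!}:SP^i(X\times X)\to SP^{n!}(X\times X)$ be the map sending $[z_1,\dots,z_i]$ to the class in which each $z_j$ is repeated $n!/i$ times (well-defined since $i \le n$ divides $n!$). This map takes the small diagonal stratum to the small diagonal stratum, i.e. $D^i_{n!}\circ \delta_i = \delta_{n!}$ as maps $X\times X\to SP^{n!}(X\times X)$, exactly as in the paragraph preceding Theorem~\ref{diagonal}. Therefore, on each $U_i$ the restriction $\delta_{n!}|_{U_i} = D^i_{n!}\circ(\delta_i|_{U_i})$ is null-homotopic, because $\delta_i|_{U_i}$ is. So $\{U_i\}_{i=1}^n$ is an open covering of $X\times X$ by $n$ sets on each of which $\delta_{n!}$ is null-homotopic. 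By definition of $\TC$ for maps (the covering characterization of $\TC(f)$ via sets on which $f|_{U_i}$ deforms into the diagonal — here the target diagonal of $SP^{n!}(X\times X)$ is $\delta_{n!}(X)$, and null-homotopy into it is exactly the required condition combined with Proposition~\ref{liftCL}(b)), this yields $\TC(\delta_{n!})\le n-1$... which is the wrong direction. So the argument is contrapositive: assuming $d\TC X < n$ I have shown $\TC(\delta_{n!}) \le n-1 = (d\cat/d\TC$-type bound$)$; re-reading, the clean formulation is that $d\TC X < n$ forces the covering above, which shows $\TC(\delta_{n!}) \le n-1$, and since trivially $\TC(\delta_{n!})\le \TC(X\times X)$ is not what we want either — rather, the theorem asserts $d\TC(X)\ge \TC(\delta_{n!})$, so I must instead read off: the covering of size $n$ on which $\delta_{n!}$ is inessential shows $\TC(\delta_{n!})\le n-1$, whereas $d\TC(X)$ equals the \emph{smallest} such $n$ minus... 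Let me restate cleanly: if $d\TC(X) = m$, apply the above with $n = m+1$? No — the hypothesis is $d\TC X < n$, and the conclusion $d\TC(X)\ge \TC(\delta_{n!})$; since $\TC(\delta_{n!})\le n-1$ from the covering and $d\TC X$ can be anything $< n$, the genuine content is that the covering has size $\le d\TC(X)+1$, giving $\TC(\delta_{n!})\le d\TC(X)$, i.e. $d\TC(X)\ge \TC(\delta_{n!})$ whenever $n! $ makes sense, i.e. whenever $d\TC X < n$. That is exactly the statement.

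The main obstacle I anticipate is the $d\TC$-analogue of Lemma~\ref{SP}: extracting, continuously over the open stratum $U_i\subset X\times X$, an honest element of $SP^i$ (an unordered $i$-tuple with multiplicity) from the measure $s(x,y)\in\mathcal B_n(P(X))$, and then checking that the induced map $U_i\to SP^i(X\times X)$ is genuinely null-homotopic rather than merely null-homotopic after stabilization. The subtlety is that $\mathcal B_n$ forgets multiplicities whereas $SP^i$ remembers them, so the map $q:Symm^{*n}\to\mathcal B_n$ and its behaviour on strata must be used carefully; one must work on the open set where exactly $i$ of the barycentric coordinates are "active and dominant" so that the support decomposition is locally constant in cardinality and a continuous choice of the $i$ paths (hence of the $SP^i$-point after evaluating endpoints) exists. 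Once that local model is set up, the homotopy contracting each path $\phi_j$ to its endpoint $(\phi_j(0),\phi_j(1)) = (x,y)$ gives the null-homotopy into $SP^i(\Delta X)$, and the rest is the formal naturality argument already used for $d\cat$. I would also double-check the edge cases $i=1$ (the stratum where $s(x,y)$ is a Dirac measure, handled directly since a single path gives a section of $\bar p$ over $U_1$, forcing $\delta_1|_{U_1}$ null-homotopic) and the indexing convention $\delta_{n!}:X\times X\to SP^{n!}(X\times X)$ versus $SP^{n!}(X)$, making sure the diagonal inclusion referenced in the statement is the one into $SP^{n!}(X\times X)$ as written.
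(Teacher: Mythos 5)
Your overall plan (stratify $X\times X$ according to the support of the section $s$ of $\mathcal B_n(\bar p)$, then push through the ``repeat'' map $SP^i\to SP^{n!}$) is the right skeleton, but the central assertion on which it rests is false: the maps $\delta_i|_{U_i}\colon U_i\to SP^i(X\times X)$ are \emph{not} null-homotopic in the $d\TC$ setting. In Lemma~\ref{SP} (the $d\cat$ case) every path in $\supp s(x)$ ends at the fixed basepoint $x_0$, so the homotopy $H_t(x)=[\phi_1(t),\dots,\phi_i(t)]$ shrinks $\delta_i|_{U_i}$ all the way to the single point $[x_0,\dots,x_0]$; that is exactly why one gets honest null-homotopy there. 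In the $d\TC$ case the paths in $\supp s(x,y)$ run from $x$ to $y$, so the analogous contraction $H_t(x,y)=[(\phi_1(t),y),\dots,(\phi_i(t),y)]$ ends at $[(y,y),\dots,(y,y)]=\delta_i(y,y)$: the homotopy deforms $\delta_i|_{U_i}$ into the subspace $\delta_i(\Delta X)\cong X$, not to a point. A map with image in a copy of $X$ need not be null-homotopic (already the $i=1$ stratum, where $\delta_1|_{U_1}$ is just the inclusion $U_1\hookrightarrow X\times X$ deforming into $\Delta X$, shows this; and $X=S^2\hookrightarrow SP^i(S^2)=\mathbb{C}P^i$ is never null-homotopic). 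Your sentence ``null-homotopic into $SP^i(\Delta X)$, hence null-homotopic in $SP^i(X\times X)$'' is a non sequitur, and also $SP^i(\Delta X)\cong SP^i(X)$, not $X$; the deformation lands in the smaller set $\delta_i(\Delta X)$. The correct $d\TC$-analogue of Lemma~\ref{SP} is the \emph{relative} statement ``each $\delta_i|_{U_i}$ is homotopic to a map with image in $\delta_i(\Delta X)$,'' and the repeat map carries $\delta_i(\Delta X)$ to $\delta_{n!}(\Delta X)$, so the conclusion one actually obtains is a cover of $X\times X$ by $\le d\TC(X)+1$ open sets on each of which $\delta_{n!}$ deforms into $\delta_{n!}(\Delta X)$.

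There is a second, related gap: you never reconcile this with the paper's actual definition of $\TC$ for maps. Under Proposition~\ref{liftCL}(b), $\TC(f)\le m$ for a map $f:A\to B$ means $f\times f:A\times A\to B\times B$ lifts to $\Delta_m(B)$, equivalently $A\times A$ (here $(X\times X)\times(X\times X)$, not $X\times X$) is covered by $m+1$ sets on which $(f\times f)$ deforms into $\Delta B\subset B\times B$. ``The diagonal of $SP^{n!}(X\times X)$'' in that sense is a subset of $SP^{n!}(X\times X)\times SP^{n!}(X\times X)$, not the subset $\delta_{n!}(\Delta X)\subset SP^{n!}(X\times X)$ you invoke. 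So as written, your covering is of the wrong space and the deformation is into the wrong diagonal. Whichever formulation of $\TC(\delta_{n!})$ the paper really has in mind (and the corollary's $zc\ell_R(\im\delta_{n!}^*)$ strongly suggests the relative/cover-of-$X\times X$ reading), this identification is the crux of the proof and cannot be waved through; the null-homotopy overclaim is what lets you sidestep it, and once that claim is removed the bridge from ``cover of $X\times X$ deforming into $\delta_{n!}(\Delta X)$'' to ``$\TC(\delta_{n!})\le d\TC(X)$'' has to be argued explicitly.
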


\begin{cor}
If $d\TC X<n$, then $d\TC(X)\ge zc\ell_R(\im\delta_{n!}^*)$ where $\delta_{n!}:Y\times Y\to SP^{n!}(Y\times Y)$ is the diagonal embedding.
\end{cor}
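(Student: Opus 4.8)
The plan is to read this off as the zero-divisor cup-length analogue of \corref{catdiag}, by substituting the map $\delta_{n!}$ supplied by \theoref{diagonal2} into the Scofield-type lower bound $\TC(f)\ge zc\ell_R\bigl((f\times f)^*(H^*(Y\times Y;R))\bigr)$ for maps that is recorded just above \theoref{diagonal2}.

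Concretely, I would first note that the stray $Y$ in the statement is the ambient $X$ of the hypothesis, so the relevant map is the diagonal embedding $\delta_{n!}:X\times X\to SP^{n!}(X\times X)$ of \theoref{diagonal2}. Assuming $d\TC X<n$, \theoref{diagonal2} gives $d\TC(X)\ge\TC(\delta_{n!})$. I would then apply the map-version zero-divisor bound with $f=\delta_{n!}$, taking the source to be $X\times X$ and the target $SP^{n!}(X\times X)$. The only point to check is the hypothesis that makes that bound run, namely that $\delta_{n!}$ carries the diagonal of its source into the diagonal of its target; this is immediate from the identity $(\delta_{n!}\times\delta_{n!})\circ\Delta_{X\times X}=\Delta_{SP^{n!}(X\times X)}\circ\delta_{n!}$, whence $(\delta_{n!}\times\delta_{n!})^*$ sends $\ker\bigl(\Delta^*_{SP^{n!}(X\times X)}\bigr)$ into $\ker\bigl(\Delta^*_{X\times X}\bigr)$ and is multiplicative, so a nonzero $(\delta_{n!}\times\delta_{n!})^*$-image of a $k$-fold product of zero-divisors of $SP^{n!}(X\times X)$ forces $\TC(\delta_{n!})\ge k$. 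This gives
$$\TC(\delta_{n!})\ \ge\ zc\ell_R\Bigl((\delta_{n!}\times\delta_{n!})^*\bigl(H^*(SP^{n!}(X\times X)\times SP^{n!}(X\times X);R)\bigr)\Bigr),$$
and chaining with the previous inequality finishes the proof; the right-hand side is precisely what the statement abbreviates as $zc\ell_R(\im\delta_{n!}^*)$, the zero-divisor cup-length of the subring $\im(\delta_{n!}\times\delta_{n!})^*\subseteq H^*\bigl((X\times X)\times(X\times X);R\bigr)$, zero-divisors being taken with respect to restriction to the diagonal copy of $X\times X$.

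I do not expect a genuine obstacle here: the substantive input is \theoref{diagonal2} (which we may assume) together with the map-version zero-divisor bound (already cited), and the remaining deduction is formal. The one thing demanding a little care, which I would spell out explicitly to forestall confusion with the $d\cat$ picture, is the index bookkeeping: because we are taking the $\TC$ of a map whose source is the product $X\times X$, all cohomology classes in play live over the fourfold product $(X\times X)\times(X\times X)$, and the zero-divisor condition must be read relative to the diagonal $X\times X\hookrightarrow(X\times X)\times(X\times X)$ and not the diagonal of $X$.
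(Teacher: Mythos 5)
Your proposal is correct and follows exactly the route the paper intends: the paper does not write out this proof but says it is obtained "similarly" to \corref{catdiag}, meaning by chaining \theoref{diagonal2} with the map-level zero-divisor bound $\TC(f)\ge zc\ell_R((f\times f)^*(H^*(Y\times Y;R)))$ of Scott recorded just above it, which is precisely your argument. Your observations that the $Y$ in the statement should be $X$ and that the bookkeeping happens over $(X\times X)\times(X\times X)$ with zero-divisors read relative to the diagonal $SP^{n!}(X\times X)\hookrightarrow SP^{n!}(X\times X)\times SP^{n!}(X\times X)$ are accurate and helpful clarifications.
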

We don't supply all details here, since this lowe bound is not used in this paper.

\

The following proposition is analogous to one from the classical LS-category.
\begin{prop}\label{covering}~\cite{DJ}
Let $p:X\to Y$ be a covering map, then $d\cat Y\le d\cat X$.
\end{prop}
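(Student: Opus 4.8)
The statement to prove is Proposition~\ref{covering}: if $p\colon X\to Y$ is a covering map, then $d\cat Y\le d\cat X$. The plan is to take a section realizing $d\cat X\le n$, namely a continuous $s\colon X\to \mathcal B_{n+1}(P(X))$ with $s(x)\in\mathcal B_{n+1}(P(x,x_0))$ for all $x\in X$ (using Theorem~\ref{chardcat} / the definition of $d\cat$), and push it forward along $p$ to build a section over $Y$. The natural candidate is $\mathcal B_{n+1}(P(p))\circ s$, where $P(p)\colon P(X)\to P(Y)$ is the map on path spaces induced by $p$ (post-composition of a path with $p$), and $\mathcal B_{n+1}(P(p))\colon \mathcal B_{n+1}(P(X))\to\mathcal B_{n+1}(P(Y))$ is the functorial extension. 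Since $p$ is continuous, $P(p)$ is continuous, hence so is the functorial map on $\mathcal B_{n+1}$; composing with $s$ gives a continuous map $\bar s\colon X\to\mathcal B_{n+1}(P(Y))$.

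The key point is that this $\bar s$ descends to $Y$. First I would observe that $\bar s(x)$ is supported on paths in $Y$ from $p(x)$ to $p(x_0)=:y_0$, so $\bar s(x)\in\mathcal B_{n+1}(P(p(x),y_0))$; thus $\bar s$ has the right target refinement for a $d\cat$-section over $Y$ provided it factors through $p$. To get the factorization, I would check that $\bar s$ is constant on the fibers of $p$, i.e. $\bar s(x)=\bar s(x')$ whenever $p(x)=p(x')$. This is where the covering hypothesis is used: given $x,x'$ in the same fiber, choose a path $\gamma$ in $X$ from $x$ to $x'$; this induces a homeomorphism $P(x,x_0)\to P(x',x_0)$ by concatenation with $\gamma^{-1}$, but after applying $p$ (which sends $\gamma$ to a loop, not to the constant path in general) this does not obviously give equality of measures. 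So instead the cleaner route is to exploit that $\bar s$ should only depend on the image point: I would argue that the assignment $y\mapsto \mathcal B_{n+1}(P(p))(s(x))$ for any $x\in p^{-1}(y)$ is well-defined and continuous by a local argument — over an evenly covered neighborhood $V$ of $y_0$ (or rather, working locally in the base), $p$ restricts to a homeomorphism on each sheet, so $\bar s$ visibly factors locally; then patch. Alternatively, and perhaps more robustly, I would not try to descend $\bar s$ at all but instead construct the $Y$-section directly: pick any section $\sigma$ of $p$ over contractible-in-$Y$ pieces is not available globally, so the descent argument is really the heart of the matter.

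The honest main obstacle is precisely establishing that the pushed-forward section is fiberwise constant, equivalently that it factors through $p$. I expect the argument the authors use is the following: the section $s\colon X\to P_0(X)_{n+1}$ of $\mathcal B_{n+1}(p_0^X)$ from Theorem~\ref{chardcat}, composed with the fiberwise map $P_0(X)_{n+1}\to P_0(Y)_{n+1}$ covering $p\colon X\to Y$, gives a map over $p$; but to land this on $Y$ one wants a section of $\mathcal B_{n+1}(p_0^Y)\colon P_0(Y)_{n+1}\to Y$, and what we have is only a map $X\to P_0(Y)_{n+1}$ over $p$. The resolution is that a section over $X$ of the pullback $p^*\big(\mathcal B_{n+1}(p_0^Y)\big)$ does not immediately give a section over $Y$ — so in fact one must go the other way. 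I would therefore reconsider and instead prove it by the open-cover characterization: from $d\cat X\le n$ get, via Lemma~\ref{SP}, an open cover $\{U_i\}_{i=1}^{n+1}$ of $X$ with each $\delta_i|_{U_i}$ null-homotopic; since $p$ is a covering, near any $y\in Y$ one can choose an evenly covered neighborhood and assemble the sheets' pieces, but the cover of $X$ need not be $p$-saturated, so this also fails directly. Given these difficulties, I expect the actual proof in~\cite{DJ} uses the section formulation together with the fact that for a covering map the induced map $\Omega X\to\Omega Y$ on the fibers is a homotopy equivalence (stated at the end of the excerpt), deducing that a section over $X$ of the pulled-back fibration can be averaged/transported over $Y$; making that transport continuous and well-defined is the step I would budget the most care for.
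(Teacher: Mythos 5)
Your write-up acknowledges that it does not reach a proof, but before assessing the individual steps it is worth noting that the inequality as printed is in the wrong direction. Take $p\colon\mathbb R\to S^1$: then $d\cat X=d\cat\mathbb R=0$ while $d\cat Y=d\cat S^1=1$, so $d\cat Y\le d\cat X$ fails; likewise $p\colon S^\infty\to L_p^\infty$ gives $d\cat X=0$ against $d\cat Y=p-1$ by Theorem~\ref{main 1}. The way Proposition~\ref{covering} is invoked in the proof of Theorem~\ref{nonor} also requires the opposite conclusion $d\cat(K\times(K\#K))\le d\cat(K\times(K\#\mathbb RP^2))$ from the covering $K\times(K\#K)\to K\times(K\#\mathbb RP^2)$, i.e.\ $d\cat(\text{cover})\le d\cat(\text{base})$. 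The intended statement, the analogue of the classical $\cat X\le\cat Y$ for a covering $p\colon X\to Y$, is $d\cat X\le d\cat Y$. Consequently your entire push-forward-and-descend scheme (take a section over $X$, apply $\mathcal B_{n+1}(P(p))$, try to factor through $p$) is aimed at a false claim; the failure of $p$-fiberwise constancy that you correctly diagnose is not a technical obstacle to be overcome but the mechanism by which that claim fails.

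For the true direction the missing ingredient is a retraction on the fiber, not a descent. One starts from a section over the base $Y$ witnessing $d\cat Y\le n$, composes with $p$, and must convert $Y$-paths into $X$-paths. In the aspherical situation relevant here, Proposition~\ref{chardcatg} turns such a section into a $G$-equivariant map $f\colon EG\to\mathcal B_{n+1}(G)$, where $G=\pi_1(Y)$. Writing $H=\pi_1(X)\le G$ and using $EG$ as a model for $EH$, the map $f$ is a fortiori $H$-equivariant; postcomposing with $\mathcal B_{n+1}(r)\colon\mathcal B_{n+1}(G)\to\mathcal B_{n+1}(H)$, where $r\colon G\to H$ is the $H$-equivariant set-map $r(ht)=h$ determined by a choice of right-coset transversal, produces an $H$-equivariant map $EG\to\mathcal B_{n+1}(H)$, hence a section witnessing $d\cat X\le n$. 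Your closing speculation about the homotopy equivalence $\Omega X\to\Omega Y$ points the wrong way: the operative fact is that $\Omega X\hookrightarrow\Omega Y$ is, up to homotopy, the inclusion of discrete sets $H\hookrightarrow G$, and it is the equivariant retraction $G\to H$, pushed through $\mathcal B_{n+1}$, that does the work.
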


\subsection{Categorical sets} Let $f:X\to Y$ be a map, a set $A\subset X$ is called {\em $f$-categorical} if $f|_A:A\to Y$ is null-homotopic. Thus, $\cat(f)\le n$ if and only if $X$ can be covered
by $n+1$ $f$-categorical open sets. If $X$ is a CW complex, then the requirement on categorical sets to be open can be dropped~\cite{Sr}.

We call a union $X_1\cup\dots\cup X_k=X$ {\em a partition of} $X$ if $\Int X_i\cap \Int X_j=\emptyset$ for $i\ne j$ and the closure of $\Int X_i$ equals $X_i$ for all $i$.

\begin{thm}\label{f-categorical}
Let $f:X\to Y$ be a map of a finite simplicial complex $X$ of $\dim X=n$ which admits a partition 
$X=V^0_1\cup\dots\cup V^0_k$ into $k$ $f$-categorical subcomplexes. Then $X$  admits a partition $X=P_1\cup \dots\cup P_k$ into $f$-categorical subcomplexes with respect to some subdivision of $X$ such that each complex $P_i$ admits a deformation retraction onto an $(n-k+1)$-dimensional subcomplex.
\end{thm}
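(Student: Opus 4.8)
The plan is to argue by induction on the number of parts $k$, building up the desired partition one piece at a time while passing to finer and finer subdivisions of $X$. The underlying mechanism is Singhof's ``deformation into the codimension-one skeleton'' trick: given an $f$-categorical subcomplex $V$ of a simplicial complex, after subdividing one can thicken $V$ to a regular neighborhood $P$ which deformation retracts onto a subcomplex of dimension one less than $\dim X$, while the complement still carries the combinatorial data needed to continue. Here the role of ``$f$-categorical'' is exactly that it is a homotopy-invariant, hereditary property: if $P$ deformation retracts onto $P'$ and $P \supset V$ with $V$ being $f$-categorical and the retraction carries $P$ into (a neighborhood of) $V$, then $P$ and $P'$ are $f$-categorical as well. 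So I would first record this stability lemma, together with the standard fact that a regular neighborhood of a full subcomplex in a subdivided complex deformation retracts to a complex of dimension $\le n-1$.

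For the inductive step, suppose $X = V_1^0 \cup \dots \cup V_k^0$ is a partition into $f$-categorical subcomplexes. Pass to the second barycentric subdivision so that each $V_i^0$ becomes a full subcomplex with disjoint regular neighborhoods along the interiors. Take $P_1$ to be the closed regular neighborhood $N(V_1^0)$; by Singhof's construction $P_1$ deformation retracts onto an $(n-1)$-dimensional subcomplex, and since the retraction deforms $P_1$ into $V_1^0$, which is $f$-categorical, $P_1$ is $f$-categorical. Now set $Y_1 := \overline{X \setminus P_1}$, a subcomplex (of the subdivision) of dimension $n$; it is partitioned by the sets $V_2^0 \cap Y_1, \dots, V_k^0 \cap Y_1$ into $k-1$ $f$-categorical subcomplexes. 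Apply the inductive hypothesis to $Y_1$ to obtain, after further subdivision, a partition $Y_1 = P_2 \cup \dots \cup P_k$ into $f$-categorical subcomplexes with each $P_i$ deformation retracting onto an $(n - (k-1) + 1) = (n-k+2)$-dimensional subcomplex. The issue is that this only gives $(n-k+2)$, not $(n-k+1)$, for $P_2,\dots,P_k$: we have ``spent'' one dimension removing $P_1$ but the recursion on $Y_1$ does not see that $P_1$ was also deformable.

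To fix the dimension count I would not simply recurse on $Y_1$ as an abstract complex, but instead use that $P_1$ can be chosen to collapse all the way into its frontier together with a spine: more precisely, iterate the regular-neighborhood collapse so that at stage $j$ the complement $Y_{j-1}$ is itself being deformation retracted (within $X$) onto a subcomplex of dimension $n-j+1$ before the next neighborhood is carved out. Concretely, after removing $P_1,\dots,P_{j}$ one arranges that the ``remaining'' complex deformation retracts onto an $(n-j)$-dimensional subcomplex; taking the regular neighborhood of the next categorical piece inside that lower-dimensional complex then produces a $P_{j+1}$ of dimension $\le (n-j)-1+1 = n-j$, i.e.\ $\le n-k+1$ once $j+1 = k$. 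This is the standard way Singhof's theorem is proved for $X$ itself (the case where each $V_i^0$ is globally contractible, $Y=\mathrm{pt}$) and the only new input needed here is that all the deformations can be chosen to respect the subcomplex filtration; the $f$-categorical hypothesis plays no role in the dimension bookkeeping, only in guaranteeing — via the stability lemma — that every $P_i$ produced along the way is $f$-categorical.

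The main obstacle, then, is the geometric step of making Singhof's iterated collapse compatible with an arbitrary partition into subcomplexes: one must show that the regular neighborhoods can be chosen so that their frontiers intersect the remaining pieces in subcomplexes (of an appropriate subdivision) and that the successive deformation retractions can be performed rel these frontiers, so that the dimensions drop by exactly one at each of the $k$ stages rather than stalling. I expect this to require a careful choice of subdivisions (barycentric subdivisions adapted to the union of all the $V_i^0$ and all previously constructed frontiers) and an appeal to the fact that in a simplicial complex a full subcomplex has a mapping-cylinder neighborhood; the homotopy-theoretic content (preservation of $f$-categoricity) is then immediate from Proposition-type hereditarity and homotopy invariance of null-homotopy.
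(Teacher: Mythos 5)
Your overall strategy — induct on $k$, carve out a regular neighborhood of $V_1^0$, and recurse on the complement — is genuinely different from the paper's argument, and it has a gap that surfaces immediately in the first step.

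Since $X = V_1^0 \cup \dots \cup V_k^0$ is a \emph{partition} (interiors disjoint and each piece the closure of its interior), each $V_i^0$ is a full-dimensional, $n$-dimensional subcomplex of $X$. A regular neighborhood $N(V_1^0)$ of such a piece deformation retracts onto $V_1^0$ itself, which is $n$-dimensional, not onto an $(n-1)$-dimensional subcomplex. So the very first claim ``by Singhof's construction $P_1$ deformation retracts onto an $(n-1)$-dimensional subcomplex'' is false for a codimension-zero piece. Moreover, even granting some version of that step, your dimension bookkeeping gives spines of dimensions $n-1, n-2, \dots, n-k+1$ for $P_1, \dots, P_k$ in that order, whereas the theorem needs \emph{every} $P_i$ to retract to dimension $n-k+1$. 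You acknowledge this mismatch as ``the main obstacle,'' but you do not resolve it; the fix you sketch (retract the complement before carving the next neighborhood) does not help $P_1$, which is built first and never revisited.

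The paper's construction avoids both problems by not recursing on the complement at all. Instead, it does a single ``multilayer'' carving operation on $V_1^0$: pass to the second barycentric subdivision, and for each $i = 2, \dots, k$ form the disjoint union $B_i$ of the stars (in $\beta^2 V_1^0$) of the barycenters of the $(n-k+i)$-dimensional simplices of $V_1^0$. Removing the interiors of all the $B_i$ from $V_1^0$ at once leaves exactly the star neighborhood of the $(n-k+1)$-skeleton of $V_1^0$, which retracts onto it — so $V_1$'s spine drops directly to dimension $n-k+1$ in one step. The removed layer $B_i$ is then attached to $V_i^0$; the resulting piece $V_i^0 \cup B_i$ is still $f$-categorical because each star in $B_i$ either misses $V_i^0$ (and is a disjoint contractible component) or meets it along a contractible set into which the star deformation retracts. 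Finally, one cycles the index $i \mapsto i-1$ and repeats $k$ times, so every piece takes a turn as the ``first'' piece and has its spine driven down to dimension $n-k+1$. The key idea you are missing is precisely this dimension-graded redistribution of stars across the other pieces, which is what lets all $k$ spines reach the same dimension $n-k+1$ rather than a descending staircase.

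Incidentally, a further subtlety the cyclic-iteration scheme relies on — and which your recursion would also have to confront — is that re-attaching later layers $B_j$ to a piece whose spine has already been reduced must not raise the spine dimension again; the paper's argument that each added star either is disjoint or retracts into the existing piece handles this, but it is worth noting that this preservation is part of what makes the iteration terminate with the stated bound.
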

\begin{proof}
In the case when $X$ is a manifold this theorem was proved by Singhof~\cite{Si}.

We define a transformation of the partition $X=V^0_1\cup\dots\cup V^0_k$ into a 
partition $X=V^1_1\cup\dots\cup V^1_k$ of $f$-categorical sets which are simplicial sucomplexes of the second barycentric subdivision $\beta^2X$ such that $V^1_1$ admits a deformation retraction onto an $(n-k+1)$-dimensional subcomplex. Then we take the cyclic permutation $i\to i-1$ on the index set $1,2,\dots,k$ and apply our transformation again. After applying this transformation $k$ times we obtain our 
partition $X=P_1\cup \dots\cup P_k$. Note that $P_i$ are subcomplexes of $\beta^{2k}X$,
the $2k$-iterated barycentric subdivision of $X$.

Let $b_\sigma$ denote the barycenter of a simplex $\sigma\subset X$.
For $i=2,\dots,k$ we denote by $B_i$ the union of all stars $St(b_\sigma, \beta^2V^0_1)$ of the barycenters of simplices $\sigma\subset V^0_1$ of $\dim\sigma=n-k+i$. Note that
$B_i$ is the disjoint union of these stars. We define 
$$V^1_1=V^0_1\setminus(\bigcup_{i=2}^k\Int B_i).$$
Note that $V^1_1$ is the star neighborhood of the $(n-k+1)$-skeleton  of $(V_1^0)$ in $\beta^2V^0_1$ which deforms onto it. The set $V^1_1$
is $f$-categorical as a subset of $f$-categorical set. 

For $i\ge 2$ we define 
$V_i^1=V_i^0\cup B_i$. The st $B_i$ splits naturally in two sets $B_i^0\cup B^i_1$ where $B_i^0$ is the union of stars that do not intersect $V_i^0$ and $B^1_i$ is the union of stars having nonempty intersection with $V_i^0$. If $$St(b_\sigma, \beta^2V^0_1)\cap V_i^0\ne\emptyset,$$ then $\sigma\subset V_1^0\cap V_i^0$. Then the intersection $$St(b_\sigma, \beta^2V^0_1)\cap V_i^0=St(b_\sigma,\beta^2(V_1^0\cap V_i^0))$$ is contractible. Moreover,
the star $St(b_\sigma, \beta^2V^0_1)$
can be deformed to that intersection. Therefore, $V_i^0\cup B_i^1$ admits a deformation retraction onto $V^0_i$. Hence $V_i^0\cup B_i^1$ is $f$-categorical.
The set $V_i^1$ is $f$-categorical as a disjoint union of the $f$-categorical set and the disjoint union of finitely many contractible sets.
\end{proof}
\begin{remark}
We can apply  this theorem to the case when each set $V_i^0\subset X$ is $f_i$-categorical for its own map $f_i:X\to Y_i$. The proof works without changes.
\end{remark}

\

\subsection{Pasting sections}
We recall that a map $p:E\to B$  satisfies the {\em Homotopy Lifting Property for a pair} $(X,A)$ if for any
homotopy $H:X\times I\to B$ with a lift $H':A\times I\to E$ of the restriction $H|_{A\times I}$
and a lift $H_0$ of $H|_{X\times0}$ which agrees with $H'$, there is a lift $\bar H:X\times I\to E$
of $H$ which agrees with $H_0$ and $H'$. We recall that a pair of spaces $(X,A)$ is called an NDR pair if $A$ is a deformation retract of a neighborhood in $X$. In particular, every CW complex pair is an NDR pair.  It is well-known~\cite{tD}, Corollary 5.5.3 that any Hurewicz fibration $p:E\to B$  satisfies the Homotopy Lifting Property for NDR pairs $(X,A)$.

\begin{lemma}\label{DS}\cite{DS}
Let $p:E\to B$ be a Hurewicz fibration over a CW complex $B=X\cup Y$ 
presented as the union of subcomplexes whose intersection  $C=X\cap Y$ has $\dim C \le n$. Suppose that there are sections of $p$ over $X$ and $Y$. Then $p$ admits a section $s:B\to E$ in the following cases:

(1) when the fiber $F$ of $p$ is $n$-connected;

(2) when $F$ is $(n-1)$-connected with $H^n(C;\mathcal F)=0$
for any local coefficients, $n>0$.
\end{lemma}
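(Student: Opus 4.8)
The statement to prove is Lemma~\ref{DS}: pasting sections over two subcomplexes $X$ and $Y$ of a CW complex $B = X \cup Y$ along their intersection $C = X \cap Y$, assuming the fiber is suitably connected. Here is how I would approach it.

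\medskip

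The plan is to reduce the problem to an obstruction-theory argument localized near the intersection. First I would fix sections $s_X : X \to E$ and $s_Y : Y \to E$. These need not agree on $C$, so the first task is to \emph{homotope them to agree on $C$}. Since $(B,C)$ and in particular $(X,C)$ is an NDR (CW) pair, and $p$ is a Hurewicz fibration, $p$ satisfies the Homotopy Lifting Property for such pairs. The two restrictions $s_X|_C$ and $s_Y|_C$ are both sections of $p|_C : p^{-1}(C) \to C$, hence both lie in the space of sections, and the obstruction to a \emph{vertical homotopy} between them (a homotopy through sections of $p|_C$) lies in the cohomology groups $H^{k}(C; \pi_{k-1}(F))$ for $k \ge 1$ in case (1), or more precisely — treating $\pi_{k-1}(F)$ as a local system — the primary and higher obstructions vanish because $F$ is $n$-connected and $\dim C \le n$, so $\pi_{k-1}(F) = 0$ for all $k \le n+1$, which covers all relevant dimensions. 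In case (2) the fiber is only $(n-1)$-connected, so the single potentially-nonzero obstruction lives in $H^{n}(C; \pi_{n-1}(F))$ with local coefficients, and this is exactly the group assumed to vanish. Thus in either case $s_X|_C$ and $s_Y|_C$ are vertically homotopic.

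\medskip

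Next I would use the Homotopy Lifting Property to \emph{spread that homotopy out}. Let $G : C \times I \to E$ be the vertical homotopy with $G_0 = s_X|_C$ and $G_1 = s_Y|_C$; it covers the constant homotopy $C \times I \to C$ onto the inclusion. Consider on $X$ the constant homotopy $X \times I \to B$ onto the inclusion $X \hookrightarrow B$; its restriction to $C \times I$ is what $G$ covers, and at time $0$ we have the lift $s_X$. By the HLP for the pair $(X,C)$ there is a lift $\widehat H : X \times I \to E$ with $\widehat H_0 = s_X$ and $\widehat H|_{C \times I} = G$. Set $s_X' = \widehat H_1$; this is a new section of $p$ over $X$, homotopic to $s_X$, with $s_X'|_C = G_1 = s_Y|_C$. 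Now $s_X'$ and $s_Y$ agree on $C = X \cap Y$, so they glue to a genuine continuous section $s : B = X \cup Y \to E$. (Continuity is automatic for maps defined piecewise on closed subcomplexes agreeing on the intersection.)

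\medskip

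The step I expect to be the main obstacle is the first one: producing the vertical homotopy of the two section-germs over $C$, i.e. correctly organizing the obstruction theory for \emph{sections of a fibration} (as opposed to maps into a space) and making sure the local-coefficient bookkeeping in case (2) is right. The cleanest way is to note that a vertical homotopy $s_X|_C \simeq s_Y|_C$ is the same as a section over $C \times I$ of the pulled-back fibration $p^{-1}(C) \times I \to C \times I$ extending the given section on $C \times \{0,1\}$; the obstructions to extending a section from a subcomplex, stepwise over cells, lie in $H^{k}(C \times I, C \times \partial I; \pi_{k-1}(F)) \cong H^{k-1}(C; \pi_{k-1}(F))$, and since $\dim C \le n$ these groups vanish once $\pi_j(F) = 0$ for $j \le n-1$ (case 2) with the single top obstruction in $H^{n}(C;\pi_{n-1}(F))$ killed by hypothesis, and all of them vanish when $\pi_j(F)=0$ for $j \le n$ (case 1). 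Everything else — the HLP applications, the gluing — is formal once the NDR/Hurewicz hypotheses are in place, exactly as in~\cite{DS}.
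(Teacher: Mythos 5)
The paper itself does not prove this lemma; it imports it from~\cite{DS}, so there is no in-text argument to compare against.  Your plan --- first use obstruction theory to produce a vertical homotopy $s_X|_C \simeq s_Y|_C$ (i.e.\ a section over $C\times I$ extending the given one on $C\times\partial I$), then drag that homotopy across $X$ by the Homotopy Lifting Property for the NDR pair $(X,C)$, and finally glue --- is the standard and correct proof, and it is precisely the argument enabled by the paragraph preceding the lemma in the paper (which records that Hurewicz fibrations satisfy the HLP for NDR pairs for exactly this purpose).

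There is, however, a genuine indexing slip in the middle of your write-up that you should fix.  You first say the obstructions to the vertical homotopy lie in $H^k(C;\pi_{k-1}(F))$ and that in case (2) the single surviving obstruction is in $H^n(C;\pi_{n-1}(F))$.  With that indexing case (2) would need \emph{no} extra hypothesis at all, since $\pi_{k-1}(F)=0$ for $k\le n$ and $H^k(C;\cdot)=0$ for $k>n$ already kill everything --- a sign the formula is off.  The correct bookkeeping is the one in your final paragraph: the $k$-th obstruction sits in $H^k(C\times I,C\times\partial I;\pi_{k-1}(F))\cong H^{k-1}(C;\pi_{k-1}(F))$, so the only potentially nonzero group in case (2) is at $k-1=n$, namely $H^n(C;\pi_n(F))$, not $H^n(C;\pi_{n-1}(F))$.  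The proof survives because the hypothesis ``$H^n(C;\mathcal F)=0$ for any local coefficients'' is strong enough to quash either group, but you should make the coefficient module $\pi_n(F)$ explicit and delete the stray $H^k(C;\pi_{k-1}(F))$ version so the two halves of the argument agree.
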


\section{Connectivity of  $\mathcal{B}_n(X)$}

It was shown in~\cite{KK}, Theorem 1.2,  that for simply connected CW-complex $X$  the space $\mathcal B_n(X)$ is
$(2n-1)$-connected.  The goal of this section to prove 

\begin{thm}\label{connectivity}
For any CW complex $X$ the space $\mathcal{B}_n(X)$ is $(n-2)$-connected.
\end{thm}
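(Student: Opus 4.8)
The plan is to prove that $\mathcal{B}_n(X)$ is $(n-2)$-connected by induction on the skeleta of $X$, reducing to the case of finite complexes and then to the combinatorial structure of $\mathcal{B}_n$ of a point and of wedges. First I would recall that $\mathcal{B}_n(X)$ depends only on the homotopy type of $X$ (via the discussion of the topologies in the introduction, and the fact that $\mathcal{B}_n$ is a homotopy functor on CW complexes), so that one may replace $X$ by a CW complex and argue cell by cell. Since $\pi_k$ commutes with directed colimits of inclusions, it suffices to treat finite $X$; and since any path-connected such $X$ has the homotopy type obtained from a wedge of circles by attaching cells, the natural strategy is to start with $\mathcal{B}_n(\bigvee S^1)$ and an attaching-cell induction.

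The base case is $\mathcal{B}_n(\mathrm{pt}) = \Delta^{n-1}$, which is contractible, hence $(n-2)$-connected (in fact much more). For the inductive step, suppose $X = A \cup_\varphi D^m$ with $\varphi : S^{m-1} \to A$. I would filter $\mathcal{B}_n(X)$ by the total mass concentrated on the open cell $e = D^m \setminus S^{m-1}$: let $\mathcal{B}_n(X)^{\le j}$ denote the subspace of measures whose support meets $e$ in at most $j$ points. Then $\mathcal{B}_n(X)^{\le 0} = \mathcal{B}_n(A)$, and one analyzes the successive differences. The relevant gluing lemma is exactly the pasting-of-sections style argument, but here more simply a Mayer–Vietoris / Blakers–Massey type input: a measure with some mass on $e$ can be pushed (by deformation retracting a collar of $e$ or, more precisely, by scaling the mass on $e$ toward $A$) unless it is \emph{entirely} supported on $e$, and the locus entirely supported on $\mathrm{int}(D^m)$ is $\mathcal{B}_k(\mathrm{int} D^m)$ for $k \le n$, which is contractible. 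The key quantitative point is that attaching a cell of dimension $m$ and passing to $\mathcal{B}_n$ only lowers connectivity in a controlled way: one uses that $\mathcal{B}_j$ of a contractible space is contractible and that the "new" part of $\mathcal{B}_n(X)$ relative to $\mathcal{B}_n(A)$ is built from joins, where the join of a $p$-connected and a $q$-connected space is $(p+q+2)$-connected. Tracking the worst case — all $n$ points allowed to wander, the cheapest cell being a $1$-cell ($S^1$) — yields the bound $n-2$.

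Concretely, I would first establish the wedge-of-circles case directly: $\mathcal{B}_n(\bigvee_r S^1)$ can be built from $\mathcal{B}_n(\mathrm{pt})$ by attaching the $r$ loops one at a time, and at each stage the added material is (up to homotopy) a join of an iterated $\mathcal{B}$-construction on an arc (contractible) with the previous stage, so connectivity never drops below $n-2$; the single worst case is $r=1$, $n$ arbitrary, where $\mathcal{B}_n(S^1)$ is known to be $(n-2)$-connected (e.g. $\mathcal{B}_2(S^1) \simeq S^1 \vee (\text{something})$ — or more cleanly, via the quotient map $q : \mathrm{Symm}^{*n}(S^1) \to \mathcal{B}_n(S^1)$ with contractible fibers, using that $\mathrm{Symm}^{*n}(S^1) = \Sigma^{n-1}(\mathbb{RP}^{n-1}_{+})$-type models are $(n-2)$-connected). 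Then for general finite $X$, attach the higher cells; since these have dimension $\ge 2$, each attachment can only improve connectivity relative to the $1$-skeleton bound, so $(n-2)$-connectivity is preserved. Finally, pass to the colimit for infinite $X$.

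The main obstacle I anticipate is making the "filtration by mass on the open cell" argument rigorous: the subspaces $\mathcal{B}_n(X)^{\le j}$ are not obviously cofibrations, and the deformation that slides mass off the open cell must be continuous in the measure and compatible with the Levy–Prokhorov (or quotient) topology, including near measures that have a point of support approaching the attaching sphere. I expect to handle this by working with the symmetric join model $\mathrm{Symm}^{*n}$ first — where the cell structure is transparent and the fibers of $q$ are contractible, so connectivity transfers — and only afterwards descending along $q$. The secondary subtlety is the homotopy-invariance and functoriality of $\mathcal{B}_n$ on CW pairs, which licenses the reduction to finite complexes built by cell attachment; I would cite the discussion in the introduction and, if needed, a short lemma that $\mathcal{B}_n(-)$ sends a CW pair $(X,A)$ with $A \hookrightarrow X$ a cofibration to a cofibration $\mathcal{B}_n(A) \hookrightarrow \mathcal{B}_n(X)$.
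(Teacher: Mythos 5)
Your proposal has a genuine gap: it never addresses disconnected $X$, yet that is precisely the case that forces the bound in the theorem down to $n-2$. You reduce at the outset to path-connected $X$ ("any path-connected such $X$ has the homotopy type obtained from a wedge of circles by attaching cells") and then build everything by cell attachment from a single $0$-cell, which only ever produces connected complexes. But the paper's Theorem~\ref{connectivity1} shows that for \emph{connected} $X$ the space $\mathcal{B}_n(X)$ is in fact $(n-1)$-connected — one better than what the theorem under discussion asserts. The drop to $(n-2)$ occurs for spaces with many path components: writing $q: X\to C=\pi_0(X)$ for the component map, $\mathcal{B}_n(X)$ sits over $\mathcal{B}_n(C)$, which is the $(n-1)$-skeleton of the simplex $\Delta(C)$, a space that is only $(n-2)$-connected when $|C|$ is large. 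The content of the paper's proof is (i) the algebraic computation (via Dold's stability theorem, Milgram's bar construction on Moore spaces, and the identification $\Sigma\,\mathrm{Symm}^{*n}(X)\cong SP^n(\Sigma X)/SP^{n-1}(\Sigma X)$) giving $(n-1)$-connectivity over each component, and (ii) a Vietoris--Begle / van Kampen argument showing that $q_n:\mathcal{B}_n(X)\to\Delta(C)^{(n-1)}$ is an $(n-1)$-equivalence over each simplex, so the connectivity of $\mathcal{B}_n(X)$ is governed by that of $\Delta(C)^{(n-1)}$. Your proposal contains no analogue of step (ii).

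A secondary issue, which does not invalidate the argument but shows the quantitative reasoning is off: your claimed worst case, $\mathcal{B}_n(S^1)$ being "known to be $(n-2)$-connected," is not where the bound comes from. From Theorem~\ref{KK}, $\Sigma\,\mathrm{Symm}^{*n}(S^1)\cong SP^n(S^2)/SP^{n-1}(S^2)=\mathbb{CP}^n/\mathbb{CP}^{n-1}\cong S^{2n}$, so $\mathcal{B}_n(S^1)\simeq S^{2n-1}$ is $(2n-2)$-connected, far better than $n-2$. So the cell-by-cell join heuristic, even if made rigorous, would not isolate the actual bottleneck; and since the filtration-by-mass-on-the-open-cell argument is left heuristic (as you acknowledge), the connected-case portion of your proposal also would need to be replaced by something closer to the paper's algebraic route through $\mathcal{R}(\Sigma X)$ before the bound is actually established.
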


We will be using the following
\begin{thm}[\cite{KK}, Theorem 3.6]\label{3.6} For any connected CW complex $X$ the space
$\mathcal{B}_n(X)$ is simply connected for all $n\ge 2$.
\end{thm}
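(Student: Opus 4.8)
The plan is to reduce the statement to a computation on the symmetric join $Symm^{\ast n}(X)=\ast^nX/S_n$ and then to apply Armstrong's theorem on the fundamental group of an orbit space. Since $\mathcal B_n(-)$ carries homotopies to homotopies, one may first replace $X$ by a homotopy‑equivalent connected simplicial complex; then $\ast^nX$ is a simplicial complex on which $S_n$ acts simplicially by permuting the join coordinates, and its orbit space is $Symm^{\ast n}(X)$. What we actually need is that the composite $\ast^nX\to Symm^{\ast n}(X)\to\mathcal B_n(X)$ is surjective on $\pi_1$: this holds because it is a quotient map with contractible (in particular path‑connected) fibers, for which explicit path lifts can be produced by following how the points of the support of a measure merge or acquire vanishing weight; alternatively it follows from the fact recorded above that $q\colon Symm^{\ast n}(X)\to\mathcal B_n(X)$ is a proper map with compact contractible fibers, hence a cell‑like map and a weak homotopy equivalence. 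In either form this reduces the theorem to showing that $\ast^nX$ is simply connected for $n\ge 2$; path‑connectedness of $\mathcal B_n(X)$ is clear since $\ast^nX$ is path connected.

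First, $\ast^nX$ is $(2n-2)$-connected: this follows by induction on $n$ from $X$ being path connected together with the classical estimate that the join of an $a$-connected and a $b$-connected space is $(a+b+2)$-connected. In particular $\ast^nX$ is simply connected for all $n\ge 2$. Now Armstrong's theorem, in its version for a simplicial action of a group $G$ on a simply connected simplicial complex $Y$, identifies $\pi_1(Y/G)$ with $G/N$, where $N$ is the normal subgroup of $G$ generated by the elements possessing a fixed point in $Y$. Here $G=S_n$ and $Y=\ast^nX$, and every transposition $(i\,j)\in S_n$ fixes the point of $\ast^nX$ with barycentric weight $\tfrac12$ on the $i$-th and $j$-th join coordinates, both carrying a fixed value $x_0\in X$, and weight $0$ on the remaining coordinates (using the convention $0x=0y$). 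Since transpositions generate $S_n$, we obtain $N=S_n$, hence $\pi_1(\ast^nX/S_n)=1$; combined with the first paragraph this gives $\pi_1(\mathcal B_n(X))=0$ for $n\ge 2$.

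The group‑theoretic input (transpositions generate $S_n$, and each transposition obviously fixes a point of $\ast^nX$) and the connectivity estimate for the join are both standard, so the one genuinely delicate point is the comparison in the first paragraph between $\mathcal B_n(X)$ and the symmetric join, i.e. the surjectivity on $\pi_1$ of $\ast^nX\to\mathcal B_n(X)$. I would carry this out by the path‑lifting argument sketched above, which is robust enough to cover an arbitrary CW complex after triangulation since it only requires lifting one compact path at a time; I expect the book‑keeping in that step — describing a continuous lift across the strata of $\mathcal B_n(X)$ where the cardinality of supports drops — to be the main technical obstacle, while everything else is formal.
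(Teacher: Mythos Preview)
The paper does not prove this statement; it is quoted from \cite{KK}, Theorem~3.6, and used as a black box. There is thus no proof in the present paper to compare against, and I assess your argument on its own.

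Your strategy --- pass to $Symm^{\ast n}(X)$ via the cell-like map $q$ and compute $\pi_1(Symm^{\ast n}(X))$ by Armstrong's theorem for the $S_n$-action on the simply connected complex $\ast^nX$ --- is correct and is the standard route to such results. A minor simplification: you need not single out transpositions, since \emph{every} element of $S_n$ fixes the point $\tfrac{1}{n}x_0+\cdots+\tfrac{1}{n}x_0\in\ast^nX$, so $N=S_n$ at once.

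There is, however, one genuine error. You assert that the \emph{composite} $\ast^nX\to Symm^{\ast n}(X)\to\mathcal B_n(X)$ has contractible (in particular path-connected) fibers. This is false: over $\mu=\tfrac12\delta_a+\tfrac12\delta_b$ with $a\ne b$ and $n=2$, the fiber in $X\ast X$ is the two-point set $\{\tfrac12a+\tfrac12b,\ \tfrac12b+\tfrac12a\}$; in general the fiber of the composite over a measure supported on $k$ distinct points contains a free $S_k$-orbit and is disconnected. Hence the sentence ``what we actually need is that the composite is surjective on $\pi_1$'' is not a reduction --- with $\ast^nX$ simply connected, that surjectivity \emph{is} the conclusion --- and it does not follow from the stated reason. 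Fortunately this is not load-bearing: your alternative, that $q$ itself has compact contractible fibers (recorded in the paper's introduction and in Proposition~\ref{h.e.}) and is therefore a weak equivalence, is correct and, combined with Armstrong, gives the result.

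Relatedly, your diagnosis of the ``delicate point'' is inverted. The map $q$ is the easy half; the place where fibers fail to be connected, and where naive path-lifting cannot by itself establish $\pi_1$-surjectivity, is the quotient $\ast^nX\to Symm^{\ast n}(X)$. The path-lifting you propose would lift a loop in $\mathcal B_n(X)$ to a path in $\ast^nX$ whose endpoints differ by a permutation, and you would still need Armstrong (or an equivalent argument) to conclude that such a path descends to a null-homotopic loop.
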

Also, we will be using the following theorem the proof of which is postponed to the end of this section.

\begin{thm}\label{connectivity1}
For any connected CW complex $X$ the space $\mathcal{B}_n(X)$ is $(n-1)$-connected.
\end{thm}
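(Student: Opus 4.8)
\textbf{Proof proposal for Theorem~\ref{connectivity1}.}

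The plan is to prove by induction on $n$ that $\mathcal B_n(X)$ is $(n-1)$-connected for every connected CW complex $X$. The base cases $n=1$ (where $\mathcal B_1(X)=X$ is $0$-connected) and $n=2$ (covered by Theorem~\ref{3.6}, which gives simple connectivity) are already in hand. For the inductive step I would exploit the natural filtration $\mathcal B_{n-1}(X)\subset \mathcal B_n(X)$ together with the fact that $\mathcal B_n(X)$ is built from $\mathcal B_{n-1}(X)$ by attaching cells coming from configurations whose support has cardinality exactly $n$; equivalently, one can use the quotient map $q\colon Symm^{\ast n}(X)\to \mathcal B_n(X)$ with compact contractible fibers, so that $\mathcal B_n(X)$ and $Symm^{\ast n}(X)=(\ast^n X)/S_n$ are weakly homotopy equivalent, and argue about the symmetric join instead.

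The key steps, in order, would be: (1) Reduce to the symmetric join $Symm^{\ast n}(X)$ via the fiberwise-contractible map $q$. (2) Recall that the $n$-fold join $\ast^n X$ of a connected complex is $(n-2)$-connected (and if $X$ is itself $c$-connected, $\ast^n X$ is $(nc + 2n - 2)$-connected), which is the classical join connectivity estimate. (3) Analyze the quotient by the free-away-from-the-diagonal $S_n$-action: stratify $\ast^n X$ by the "fat diagonal'' where some barycentric coordinates coincide or some points collide, and observe that on the open stratum the action is free while the singular strata are copies of lower symmetric joins $Symm^{\ast k}(X)$ with $k<n$, which by induction are highly connected. (4) Use a Mayer–Vietoris / van Kampen style argument, or the homology spectral sequence of the filtration by these strata, to push the connectivity of $\ast^n X$ down to $\mathcal B_n(X)$, losing no more than one dimension; the single lost dimension is exactly the difference between the bound $2n-1$ in the simply connected case of~\cite{KK} and the bound $n-1$ here, reflecting the effect of a nontrivial $\pi_1(X)$. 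Combine with Hurewicz to upgrade the homological statement to a homotopy statement, using Theorem~\ref{3.6} to guarantee simple connectivity so that Hurewicz applies.

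The main obstacle I anticipate is step (4): controlling how the singular strata (the fat diagonal in $\ast^n X$ modulo $S_n$) contribute to the low-dimensional homotopy of the quotient. Unlike the simply connected case treated in~\cite{KK}, here $\pi_1(X)$ can be arbitrary, so the local coefficient systems appearing in the spectral sequence of the stratification are nontrivial, and one must verify that the relevant relative homology groups $H_i$ of $(\mathcal B_n(X), \mathcal B_{n-1}(X))$ vanish for $i\le n-1$ with \emph{all} local coefficients. I would handle this by noting that a relative $i$-cell of this pair is supported on a configuration of exactly $n$ distinct points carrying a weight from the interior of the $(n-1)$-simplex, so such cells have dimension at least $n-1 + (\text{connectivity contribution from }X)$; the weight simplex alone contributes $n-1$, and since $X$ is nonempty and connected each point contributes at least the reduced cells of $X$, so genuinely new cells appear only in dimension $\ge n$. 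This is essentially a cellular chain estimate on $Symm^{\ast n}(X)$ and should be made precise by choosing a CW structure on $\mathcal B_n(X)$ adapted to the support-cardinality filtration; then the relative cellular chain complex of $(\mathcal B_n(X),\mathcal B_{n-1}(X))$ is concentrated in degrees $\ge n-1$, giving $(n-1)$-connectedness directly once simple connectivity is known.
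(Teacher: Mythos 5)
Your approach is genuinely different from the paper's: the paper does not work with a stratification of $Symm^{\ast n}(X)$ at all, but instead passes through Kallel--Karoui's identification $\Sigma\, Symm^{\ast n}(X)\cong SP^n(\Sigma X)/SP^{n-1}(\Sigma X)$, reformulates $(n-1)$-connectivity as $1$-connectivity of the bigraded ring $\mathcal R(\Sigma X)$, invokes Dold's theorem that $\mathcal R$ depends only on homology to reduce to a wedge of Moore spaces, and then checks $1$-connectedness of $\mathcal R(S^2)$ and $\mathcal R(M(\mathbb Z_{p^k},2))$ using Milgram's explicit bigraded differential algebra models. Your plan instead stays on the geometric side, using the support-cardinality filtration and a cellular chain estimate; if it worked, it would be more elementary and would avoid the Milgram machinery altogether.

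Unfortunately there is a genuine gap in the final step. You assert that once the relative cellular chain complex of the pair $\left(\mathcal B_n(X),\mathcal B_{n-1}(X)\right)$ is concentrated in degrees $\ge n-1$ (or $\ge n$), one gets $(n-1)$-connectedness of $\mathcal B_n(X)$ ``directly once simple connectivity is known.'' But feeding this into the long exact sequence of the pair, together with the induction hypothesis that $\mathcal B_{n-1}(X)$ is $(n-2)$-connected, only controls $H_i(\mathcal B_n(X))$ for $i\le n-2$: in degree $n-1$ one has the segment
$$
H_n\left(\mathcal B_n,\mathcal B_{n-1}\right)\longrightarrow H_{n-1}\left(\mathcal B_{n-1}\right)\longrightarrow H_{n-1}\left(\mathcal B_n\right)\longrightarrow H_{n-1}\left(\mathcal B_n,\mathcal B_{n-1}\right),
$$
and neither $H_{n-1}(\mathcal B_{n-1})$ nor the outer relative terms are killed by your hypotheses. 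In other words, the induction hypothesis is one degree too weak for the induction step to close. The $n=2$ case makes this concrete: $\mathcal B_1(X)=X$ has $H_1(X)$ arbitrary, yet $H_1(\mathcal B_2(X))=0$; this is a cancellation phenomenon in the chain complex, not a consequence of any dimension bound on relative cells. To salvage the strategy you would need an additional input, e.g.\ a stabilization argument using the contractibility of $\mathcal B_\infty(X)=\bigcup_n\mathcal B_n(X)$ to show that the iso range of $H_i(\mathcal B_n)\to H_i(\mathcal B_{n+1})$ eventually reaches $0$, combined with the sharper estimate (for a CW structure on $X$ with a single $0$-cell) that relative cells of $(\mathcal B_n,\mathcal B_{n-1})$ lie in dimension $\ge 2n-2$ rather than $\ge n$; neither of these appears in the proposal. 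A secondary issue is that the $S_n$-quotient of a product of cells is not a cell when cells repeat, so the existence of the CW structure you are implicitly invoking (adapted to the support filtration, with the claimed dimensional bound, and behaving well under $S_n$) needs to be established and is not automatic.
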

Besides Theorem~\ref{3.6} and Theorem~\ref{connectivity1}
the following well-known fact is used in the proof: The union of $n$-connected spaces $X=A\cup B$ is $n$-connected if the intersection $A\cap B$ is $(n-1)$-connected, where $(-1)$-connectivity means that the space is not empty.
This holds true for reasonably nice sets like CW-subcomplexes. It's known that it suffices for them to be good in a sense of~\cite{Ha}, or equally to form NDR pairs.

Finally we will be using the following version pof Vietoris-Begle theorem.

\begin{thm}[Combinatorial Vietoris-Begle Theorem]~\label{VB} Let $p:X\to K$ be a proper map onto a finite simplicial complex such that $p^{-1}(\sigma)$ is an ANR
 and $H_i(p^{-1}(\sigma))=0$ for $i\le n$ for every simplex $\sigma\subset K$.
Then $p_*:H_i(X)\to H_i(K)$ is an isomorphism for $i\le n$.
\end{thm}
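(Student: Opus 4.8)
The plan is to run an induction on the skeleta of $K$, using the standard Mayer–Vietoris/relative homology machinery and the fact that over a single closed simplex the map is a homology equivalence in the range $i\le n$ by hypothesis. First I would set up notation: let $K^{(j)}$ denote the $j$-skeleton of $K$ and $X^{(j)}=p^{-1}(K^{(j)})$, so that $X^{(j)}$ is built from $X^{(j-1)}$ by attaching the preimages $p^{-1}(\sigma)$ of the $j$-simplices $\sigma$ along $p^{-1}(\partial\sigma)$. The base case is $K^{(0)}$: here $X^{(0)}$ is a disjoint union of the fibers over vertices, each of which is $H_i$-trivial for $i\le n$ and nonempty, so $p_*\colon H_i(X^{(0)})\to H_i(K^{(0)})$ is an isomorphism for $i\le n$ (both sides being the free group on the vertex set in degree $0$ and $0$ in positive degrees $\le n$). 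Properness and the ANR hypothesis guarantee that all the subspaces involved are reasonably behaved (in particular the pairs $(X^{(j)},X^{(j-1)})$ and $(K^{(j)},K^{(j-1)})$ are good, so excision and the long exact sequences apply).

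For the inductive step, assume $p_*\colon H_i(X^{(j-1)})\to H_i(K^{(j-1)})$ is an isomorphism for $i\le n$. Consider the map of long exact sequences of the pairs $(X^{(j)},X^{(j-1)})$ and $(K^{(j)},K^{(j-1)})$ induced by $p$. By excision and the disjointness of the open stars of the $j$-simplices, the relative groups split as direct sums over the $j$-simplices $\sigma$:
$$
H_i(X^{(j)},X^{(j-1)})\cong\bigoplus_{\sigma}H_i\bigl(p^{-1}(\sigma),p^{-1}(\partial\sigma)\bigr),
\qquad
H_i(K^{(j)},K^{(j-1)})\cong\bigoplus_{\sigma}H_i(\sigma,\partial\sigma).
$$
The key local computation is that $p_*\colon H_i(p^{-1}(\sigma),p^{-1}(\partial\sigma))\to H_i(\sigma,\partial\sigma)$ is an isomorphism for $i\le n$: this follows from the five lemma applied to the map of long exact sequences of the two pairs, since $p^{-1}(\sigma)$ and $p^{-1}(\partial\sigma)$ both have trivial reduced homology in degrees $\le n$ (each $p^{-1}(\tau)$ for $\tau\subseteq\sigma$ is $H_{\le n}$-acyclic by hypothesis, and $\partial\sigma$ is a subcomplex whose simplices all carry acyclic fibers, so a preliminary induction on the dimension of $\tau$ gives $\tilde H_i(p^{-1}(\partial\sigma))=0$ for $i\le n-1$), matching the homology of the disk $\sigma$ and its boundary sphere in the same range. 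Here one uses $i\le n$ on the relative terms, which only needs $\tilde H_{\le n}$ of $p^{-1}(\sigma)$ and $\tilde H_{\le n-1}$ of $p^{-1}(\partial\sigma)$ to vanish.

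With the relative terms matched in degrees $\le n$ and the absolute terms over $K^{(j-1)}$ matched by the inductive hypothesis (also in degrees $\le n$), the five lemma in the ladder of long exact sequences yields that $p_*\colon H_i(X^{(j)})\to H_i(K^{(j)})$ is an isomorphism for $i\le n-1$ and an epimorphism for $i=n$ immediately; to upgrade to an isomorphism in degree $n$ as well one chases one step further, using that the relative groups in degree $n+1$ over the $j$-simplices are also computed by the local comparison (the fibers being acyclic through degree $n$ forces $H_{n+1}(\sigma,\partial\sigma)\to$ the $X$-side to be injective on the relevant image), or more cleanly one simply runs the induction with the slightly stronger statement "$p_*$ is iso for $i\le n$" being inherited from "each fiber is $H_{\le n}$-acyclic" at every skeletal stage — the bookkeeping is routine. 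Since $K$ is finite-dimensional, after finitely many steps we reach $X^{(\dim K)}=X$ and $K^{(\dim K)}=K$, giving the claim. The main obstacle I anticipate is purely technical: making the excision/direct-sum decomposition of the relative groups rigorous given only that the fibers are ANRs (rather than CW complexes), and carefully tracking that the degree bounds line up so that the five lemma delivers an isomorphism and not merely a surjection in the top degree $n$; both are handled by the standard "enlarge to open mapping cylinder neighborhoods" trick for ANR pairs together with a half-step shift in the induction hypothesis.
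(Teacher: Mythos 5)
Your overall approach---induction over the simplices of $K$, splitting the relative terms by excision, and comparing via the five lemma---is in substance the Mayer--Vietoris/skeletal induction the paper gestures at. But there is a genuine error in your ``key local computation.'' You assert that a preliminary induction gives $\tilde H_i(p^{-1}(\partial\sigma))=0$ for $i\le n-1$, and then feed this into the five lemma via the sentence ``$p^{-1}(\sigma)$ and $p^{-1}(\partial\sigma)$ both have trivial reduced homology in degrees $\le n$.'' The second half of that sentence is false. The hypothesis only gives acyclicity of the preimage of each \emph{closed simplex}; $\partial\sigma$ is a union of simplices, homotopy equivalent to a $(\dim\sigma-1)$-sphere, and the very Vietoris--Begle mechanism you are establishing forces $\tilde H_i(p^{-1}(\partial\sigma))\cong\tilde H_i(\partial\sigma)$ in the range, which is $\mathbb{Z}$ in degree $\dim\sigma-1$. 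That degree lies inside your claimed vanishing range whenever $\dim\sigma\le n$ (take $p=\mathrm{id}$ on a single $2$-simplex with $n=2$: then $p^{-1}(\partial\sigma)=S^1$ and $\tilde H_1\neq 0$ although $1\le n-1$). So the five-lemma step, as written, does not go through.

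The fix is standard. What the five lemma actually needs is not acyclicity of $p^{-1}(\partial\sigma)$ but the inductively supplied isomorphisms $p_*\colon H_i(p^{-1}(\partial\sigma))\to H_i(\partial\sigma)$ for $i\le n$. Combined with the facts that $\tilde H_i(p^{-1}(\sigma))=0=\tilde H_i(\sigma)$ for $i\le n$ (the fiber hypothesis on one side, contractibility on the other), the five lemma applied to the ladder of long exact sequences of the pairs $(p^{-1}(\sigma),p^{-1}(\partial\sigma))$ and $(\sigma,\partial\sigma)$ does give the relative isomorphism in degrees $\le n$. This means the induction cannot be the two-track ``skeleta plus acyclicity of boundary preimages'' setup you propose; it should be a single induction on the number of simplices of $K$ (or on $\dim K$, treating $\partial\sigma$ as a bona fide subcomplex of lower dimension to which the theorem already applies by the inductive hypothesis), so that the Vietoris--Begle conclusion for $\partial\sigma$ is available when you need it. With that adjustment, your skeleton of the argument is sound and matches the route the paper indicates.
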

\begin{proof}
This is well-known fact which can be proven by induction on dimension by means of the Mayer-Vietoris exact sequence. It also can be derived from homology version of Vietoris-Begle theorem by modifying the map $p$ to a homotopy equivalent map $p':X'\to K$ with acyclic up to dimension $n$ point preimages. We refer to~\cite{Dy} for homology version of the Vietoris-Begle
theorem.
\end{proof}

Let $X$ be a CW complex and $q:X\to C$ be the quotient map collapsing path components to points.
We denote by $$q_n=\mathcal B_n(q):\mathcal B_n(X)\to\mathcal B_n(C)=\Delta(C)^{(n-1)}$$ the induced map on the $n$-measures
where $\Delta(C)$ is the simplex spanned by $C$. 
Thus, $q_1=q$. 
By a slight abusing of notations we use the same  symbol for the map $q_n:\mathcal B_n(X)\to\Delta(C)$.

By Theorem~\ref{3.6}, the preimages $q_n^{-1}(v)$  are simply connected  for $n\ge 2$ for all vertices $v\in C$.

\begin{prop}\label{preimages}
For any simplex $\sigma\subset\Delta(C)$ the map $q_n$ induces isomorphisms of homotopy groups
$$(q_n)_*:\pi_i(q_n^{-1}(\sigma))\to\pi_i(\sigma^{(n-1)})$$   for $i\le n-1$ for all $\sigma$.
\end{prop}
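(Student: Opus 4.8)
The plan is to argue by induction on the number $k+1$ of vertices of $\sigma$ (equivalently on $\dim\sigma=k$). Write $v_0,\dots,v_k$ for the vertices of $\sigma$, let $X_j=q^{-1}(v_j)$ be the corresponding path components of $X$ — each a connected subcomplex — and note that $q_n^{-1}(\sigma)$ is precisely $\mathcal B_n(X_0\sqcup\cdots\sqcup X_k)$, with $q_n$ sending a measure to the point of $\sigma^{(n-1)}=\mathcal B_n(\{v_0,\dots,v_k\})$ recording its total mass on each $X_j$; for a face $\tau\subset\sigma$ the preimage $q_n^{-1}(\tau)$ has the same description, indexed by the vertices of $\tau$. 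The base case $k=0$ is immediate: $q_n^{-1}(\sigma)=\mathcal B_n(X_0)$ is $(n-1)$-connected by Theorem~\ref{connectivity1} and $\sigma^{(n-1)}$ is a point.

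For the inductive step I would distinguish two regimes. If $k\ge n$, then every simplex $\tau$ of the complex $\sigma^{(n-1)}$ is a proper face of $\sigma$, hence has at most $n\le k$ vertices, and since $\dim\tau\le n-1$ we have $\tau^{(n-1)}=\tau$ contractible; so by the inductive hypothesis $q_n^{-1}(\tau)$ is $(n-1)$-connected, in particular an ANR with reduced homology vanishing through degree $n-1$. The Combinatorial Vietoris--Begle Theorem~\ref{VB} (after the routine reduction to finite subcomplexes of the $X_j$, with its range parameter taken to be $n-1$) then shows that $(q_n)_*\colon H_i(q_n^{-1}(\sigma))\to H_i(\sigma^{(n-1)})$ is an isomorphism for $i\le n-1$. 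To upgrade this to the homotopy statement one treats $\pi_0$ and $\pi_1$ separately — the latter by a van Kampen computation on the cell structure of $\sigma^{(n-1)}$ pulled back along $q_n$ — and then, once source and target are known to be simply connected (which for $n\ge 3$ follows because $(n-1)$-skeleta of simplices are simply connected and $\mathcal B_n$ of a disjoint union of connected complexes can be shown simply connected by the same two-piece decomposition used below), applies the relative Hurewicz theorem to the mapping cylinder of $q_n$.

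If $k\le n-1$, then $\sigma^{(n-1)}=\sigma$ is contractible and the claim reduces to: $\mathcal B_n(X_0\sqcup\cdots\sqcup X_k)$ is $(n-1)$-connected. I would prove this by a secondary induction on $k$. Put $A=X_0\sqcup\cdots\sqcup X_{k-1}$, $B=X_k$, and decompose $\mathcal B_n(A\sqcup B)=P\cup Q$ with $P=\{\mu:\mu(B)\le\frac12\}$, $Q=\{\mu:\mu(B)\ge\frac12\}$. On $P$ one has $\mu(A)\ge\frac12>0$, so $\mu_s=\frac{1-(1-s)t}{1-t}\,\mu|_A+(1-s)\,\mu|_B$ (where $t=\mu(B)$) is a well-defined deformation retraction of $P$ onto $\mathcal B_n(A)$ — it scales the $B$-mass to $0$ and renormalizes the $A$-part, and the support size never increases — and symmetrically $Q$ deformation retracts onto $\mathcal B_n(B)$. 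These are $(n-1)$-connected by the secondary inductive hypothesis and by Theorem~\ref{connectivity1} respectively, so by the gluing lemma for connectivity recalled above it suffices to prove that
$$P\cap Q\;\cong\;\{(\nu_A,\nu_B)\in\mathcal B_{n-1}(A)\times\mathcal B_{n-1}(B)\,:\,|\supp\nu_A|+|\supp\nu_B|\le n\}\;=\;\bigcup_{a=1}^{n-1}\mathcal B_a(A)\times\mathcal B_{n-a}(B)$$
is $(n-2)$-connected.

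This last point is the main obstacle: $P\cap Q$ is a "skeletal join'' of the filtrations $\{\mathcal B_a(A)\}$ and $\{\mathcal B_b(B)\}$, and its connectivity has to be extracted from those of the pieces $\mathcal B_a(A)$ (which, by the already-established cases of the proposition at the smaller parameter $a$, are at least $(a-2)$-connected) and $\mathcal B_b(B)$ (which is $(b-1)$-connected by Theorem~\ref{connectivity1}) together with how they overlap. I would handle it by a further application of the gluing lemma along the join filtration $J_m=\bigcup_{a+b=m}\mathcal B_a(A)\times\mathcal B_b(B)$ for $m=2,\dots,n$, writing $J_m=J_{m-1}\cup\bigcup_{a+b=m,\,a,b\ge 1}\mathcal B_a(A)\times\mathcal B_b(B)$ and using the standard estimate that the join of a $p$-connected and a $q$-connected space is $(p+q+2)$-connected; an alternative is a second invocation of Theorem~\ref{VB} for the projection $P\cap Q\to\mathcal B_{n-1}(A)$, whose preimage of the stratum $|\supp\nu_A|=a$ is a copy of $\mathcal B_{n-a}(B)$. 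Organizing the three nested inductions — on $n$, on $\dim\sigma$, and on the join index $m$ — and disposing of the low-degree ($n\le 2$) exceptions in the homology-to-homotopy passage is where essentially all of the work lies; the Vietoris--Begle input and the deformation retractions themselves are routine once that combinatorial bookkeeping is in place.
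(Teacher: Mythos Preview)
Your overall scaffolding matches the paper's: an outer induction (the paper makes it explicit on $n$; you fold it in via ``the already-established cases at smaller parameter $a$''), the base case via Theorem~\ref{connectivity1}, simple connectivity treated separately, and for $\dim\sigma\ge n$ the Combinatorial Vietoris--Begle Theorem applied to the cover by preimages of top faces followed by Hurewicz. Where you diverge is in the regime $\dim\sigma\le n-1$. The paper decomposes $q_{n+1}^{-1}(\sigma\ast v)$ directly as a union of \emph{joins}
\[
q_{n+1}^{-1}(\sigma\ast v)=\bigcup_{k=0}^{n+1}\, q_k^{-1}(\sigma)\ast q_{n+1-k}^{-1}(v),
\]
so that each piece $M_k$ inherits connectivity $(k-2)+(n-k)+2=n$ from the join estimate, while the successive intersections $(\bigcup_{i\le k}M_i)\cap M_{k+1}$ are again joins with one index dropped and hence $(n-1)$-connected. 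The gluing lemma then yields $n$-connectivity in one sweep.

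Your half-mass split $P\cup Q$ is a reasonable alternative, and your deformation retractions of $P$ onto $\mathcal B_n(A)$ and of $Q$ onto $\mathcal B_n(B)$ are correct. But the entire burden now rests on showing that $P\cap Q=\bigcup_{a=1}^{n-1}\mathcal B_a(A)\times\mathcal B_{n-a}(B)$ is $(n-2)$-connected, and neither of your sketches lands as stated. The pieces of your filtration $J_m$ are \emph{products}, whose connectivity is only $\min(a-2,\,n-a-1)$, not $(a-2)+(n-a-1)+2$; invoking the join estimate here is a category error, and no direct gluing along $\{J_m\}$ will produce $(n-2)$. Your second proposal, the projection $P\cap Q\to\mathcal B_{n-1}(A)$, is not a fibration: the fibre jumps from $\mathcal B_{n-a}(B)$ to $\mathcal B_{n-a'}(B)$ as one crosses strata of $\mathcal B_{n-1}(A)$, so Theorem~\ref{VB} does not apply as stated. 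In fact the cleanest way to rescue $P\cap Q$ is to reinstate the $t$-coordinate you collapsed: $P\cap Q$ is the $t=\tfrac12$ slice of the open region $\{0<t<1\}$ inside the very join decomposition the paper uses, and its connectivity is most naturally read off by arguing for that union of joins. So your route is not wrong in principle, but at its decisive step it would have to reconstruct the paper's join argument; the paper's decomposition is simply the more direct path.
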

\begin{proof}
We prove it by induction on $n$. It is a true statement for $n=1$, but still we need to treat the case $n=2$ separately.
By Theorem~\ref{3.6} the space $q_2^{-1}(v)$ is simply connected.
It is an easy exercise to show that $q_2^{-1}([v_0,v_1])$ is simply connected. Then $q_2^{-1}(\partial\Delta^2)$ for a 2-simplex $\Delta^2$ has the fundamental group equal $\mathbb Z$ which maps isomorphically onto $\pi_1(\partial\Delta^2)$ by $q_2$. Let a simplex $\sigma$ be of $\dim\sigma>2$ and let $v\in\sigma$ be a vertex. Let $\{\Delta_i\}$ be the family of all
2-dimensional faces containing $v$. Then $\sigma^{(1)}=\cup_i\Delta_i^{(1)}$. Thus,
we have the cover of $q_2^{-1}(\sigma)=q_2^{-1}(\sigma^{(1)}$
by the sets $q_2^{-1}(\Delta_i)$. Note that double and triple intersections $q_2^{-1}(\Delta_i)\cap  q_2^{-1}(\Delta_j)$ and $q_2^{-1}(\Delta_i)\cap  q_2^{-1}(\Delta_j)\cap  q_2^{-1}(\Delta_k)$ are simply connected. Then by van Kampen theorem~\cite{Ha} $q_2$ induces an isomorphism of the fundamental groups
$(q_2)_*:\pi_1(q_n^{-1}(\sigma))\to\pi_1(\sigma^{(1)})$.

Assume that the statement of proposition holds true for some $n\ge 2$. First we prove it for $n+1$ for simplices of dimension $N\le n$ by induction on $N$. 
In that case our statement turns into the following: The preimage $q_{n+1}^{-1}(\sigma)$ is $n$-connected.
When $\sigma=v$ is a vertex $v\in C$, 
the preimage $q^{-1}_{n+1}(\sigma)=\mathcal B_{n+1}(q^{-1}(v))$ is $n$-connected by Theorem~\ref{connectivity1}.
Suppose that for $ \sigma$ of dimension $N<n$ the preimage $q^{-1}_{n+1}(\sigma)$ is $n$-connected.
Let $\sigma'=\sigma\ast v$ be a simplex of dimension $N+1$. Note that $$q_{n+1}^{-1}(\sigma\ast v)=\bigcup_{k=0}^{n+1}q_k^{-1}(\sigma)\ast q_{n-k+1}^{-1}(v)$$
where we use the convention $q_0=\emptyset$, and $\emptyset\ast q_{n+1}^{-1}(v)= q_{n+1}^{-1}(v)$. 
 By the external induction assumption  we obtain that $q_k^{-1}(\sigma)=q_k^{-1}(\sigma^{(k-1)})$ is $(k-2)$-connected. Then
each space $M_k=q_k^{-1}(\sigma)\ast q_{n-k+1}^{-1}(v)$ for $1\le k\le n$ is $n$-connected as the join product of $(k-2)$-connected and $(n-k)$-connected spaces. The spaces $M_0=q_{n+1}^{-1}(v)$ and $M_{n+1}=q_{n+1}^{-1}(\sigma)$ are $n$-connected by Theorem~\ref{connectivity1} and the internal induction assumption.
Note that $$(\cup_{i=0}^kM_i)\cap M_{k+1}=q_{k-1}^{-1}(\sigma)\ast q^{-1}_{n-k+1}(v)$$ is 
$(n-1)$-connected for each $k$. Thus, $q^{-1}_{n+1}(\sigma')=\cup_{k=0}^{n+1}M_k$ is $n$-connected.

Next we show by induction on $\dim\sigma'$ that $q_{n+1}^{-1}(\sigma')$ is simply connected. 
By what we have proven above this is true when $\dim\sigma'\le n$. We consider $\sigma'=\sigma\ast v$ where $q_{n+1}^{-1}(\sigma)$ is 1-connected.
Since $(\sigma')^{(n)}=\sigma^{(n)}\cup(\sigma^{(n-1)}\ast v)$, we obtain $$q_{n+1}^{-1}(\sigma')=q_{n+1}^{-1}(\sigma^{(n)}\cup(\bigcup_{k=0}^{n}q^{-1}_{k}(\sigma^{(n-1)})\ast q_{n-k}^{-1}(v)).$$
 We show that $M=\bigcup_{k=0}^{n}M_k$ where $M_k=q^{-1}_{k}(\sigma^{(n-1)})\ast q_{n-k}^{-1}(v)$
is simply connected. First we note that each $M_k$ for $1<k$ being the join product, where one of the  factors is connected, is simply connected. $M_0=q_{n+1}^{-1}(v)$ is 1-connected by Theorem~\ref{3.6}. Arguing as above we can show that $M$ is 1-connected. Note that $q_{n+1}^{-1}(\sigma^{(n)})\cap M=q^{-1}_{n+1}(\sigma^{(n-1)}$ is connected for $n\ge 2$.
Thus, $q_{n+1}^{-1}(\sigma')$ is simply connected.

In the case when $\dim\sigma\ge n+1$ the $n$-skeleton $\sigma^{(n)}=\cup_i\sigma^n_i$ is partitioned into $n$-dimensional faces. Then $q_{n+1}^{-1}(\sigma)$ is partitioned into
the sets $P_i=q_{n+1}^{-1}(\sigma^n_i)$. We proved above that the sets $P_i$ and their all nonempty intersections are $n$-connected. Then by the Combinatorial Vietoris-Begle theorem
(Theorem~\ref{VB})
$q_n$ induces isomorphisms of homology groups $$(q_n)_*:H_i(q_n^{-1}(\sigma))\to H_i(\sigma^{(n-1)})$$   for $i\le n$. Since the spaces there are simply connected,
we complete the external induction by  the Hurewicz theorem.

We omit the proof that all above spaces are good in sense of~\cite{Ha}.
\end{proof}

\

Since $\mathcal B_n(X)=q_n^{-1}(\Delta(C)^{(n-1)})$, we obtain Theorem~\ref{connectivity} as a corollary of Proposition~\ref{preimages}.

\

\subsection{Bigraded ring $\mathcal R(X)$.}
For a topological space $X$ we define $$R_{i,m}(X)=H_i(SP^m(X),SP^{m-1}(X))$$ and let $$\mathcal R(X)=\bigoplus_{i,m}R_{i,m}(X).$$
The natural multiplication
$$
M: SP^m(X)\times SP^n(X) \to SP^{m+n}(X)$$
is defined by the formula
$$
M([x_1,\dots,x_m],[y_1,\dots y_n])=[x_1,\dots, x_m,y_1,\dots y_n].$$
For homology we obtain $M_*(R_{i,m}\otimes R_{s,t})\subset R_{i+s,m+t}$. That gives
a bigraded ring structure on $\mathcal R(X)$.

For a finite simplicial complex $X$ Milgram defined in~\cite{Mi} a bigraded differential algebra $C(X)$ which is a chain complex for
$SP^\infty(X)$ satisfying the following properties.
the following.
\begin{thm} [\cite{Mi}, Theorem 5.1]~\label{Mi}
There are bigraded ring isomorphisms
$$
\mathcal R(X)\cong H_*(C(X))
$$
and, moreover,
$$
\mathcal R(X\vee Y)\cong H_*(C(X)\otimes C(Y)).$$
\end{thm}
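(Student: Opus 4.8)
The plan is to realize $SP^\infty(X)$ as a CW complex \emph{functorially in the simplicial complex} $X$, in such a way that the weight filtration by the subspaces $SP^m(X)$ becomes a filtration by subcomplexes; one then identifies the resulting weight-graded cellular chain complex with Milgram's bigraded differential algebra $C(X)$. First I would fix the simplicial structure on $X$ and describe the open cells of $SP^m(X)\setminus SP^{m-1}(X)$: such a cell corresponds to an unordered formal sum $\mu=n_1\sigma_1+\cdots+n_k\sigma_k$ with the $\sigma_i$ distinct nondegenerate simplices and $\sum n_i=m$ (the basepoint being allowed to "pad" the weight up to $m$), the closed cell being the image in $SP^m(X)$ of $\prod_i(\Delta^{\dim\sigma_i})^{n_i}$ under the product of characteristic maps followed by the quotient by $\prod_i S_{n_i}$. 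The first point to verify is that these pieces assemble into an honest CW complex — here one must be careful, since the symmetric group action on a product of simplices is not free, so the quotient "cells" require the standard regularity argument of Milgram — and that $SP^{m-1}(X)\subset SP^m(X)$ is a subcomplex, so that $R_{i,m}(X)=H_i(SP^m(X),SP^{m-1}(X))$ is exactly the homology of the weight-$m$ graded summand of the cellular chain complex.

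Next I would compute the cellular boundary of a cell $\mu$. It splits into two kinds of terms: \emph{simplicial boundary} terms, replacing one $\sigma_i$ by a codimension-one face $d_j\sigma_i$, carrying the signs of $\partial$ in $C_*(X)$ together with the multiplicity coefficients coming from the $n_i$ repeated copies; and \emph{collision} terms, supported on the strata where two of the $m$ points come together or a simplex degenerates onto a face already present, which encode the $S_m$-equivariant combinatorics and are responsible for the divided-power/torsion part of $C(X)$. The heart of the argument is to show that, for a coherent choice of characteristic maps, this cellular differential agrees term by term with the differential of $C(X)$; by naturality and excision this reduces to the universal local model, namely the relative homology of $SP^m(\Delta^n)$ modulo $SP^{m-1}(\Delta^n)$, where the computation can be done explicitly. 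Granting this, the bigraded ring structure is almost formal: the multiplication $M:SP^m(X)\times SP^n(X)\to SP^{m+n}(X)$ is cellular (it sends the product cell $\mu\times\nu$ to the cell $\mu+\nu$), it respects the weight filtration so that $M_*(R_{i,m}\otimes R_{s,t})\subset R_{i+s,m+t}$, and on cellular chains it realizes the product of $C(X)$; this yields the bigraded ring isomorphism $\mathcal{R}(X)\cong H_*(C(X))$.

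For the wedge formula I would use that the free commutative topological monoid functor carries wedges (coproducts of pointed spaces) to products: there is a natural homeomorphism $SP^\infty(X\vee Y)\cong SP^\infty(X)\times SP^\infty(Y)$ under which $SP^m(X\vee Y)=\bigcup_{a+b=m}SP^a(X)\times SP^b(Y)$. Equipping $X\vee Y$ with the evident simplicial structure, the CW structure on $SP^\infty(X\vee Y)$ built above is precisely the product CW structure, so its cellular chain complex is the tensor product of those of $SP^\infty(X)$ and $SP^\infty(Y)$ with weights adding; hence $C(X\vee Y)\cong C(X)\otimes C(Y)$ as bigraded differential algebras, and therefore $\mathcal{R}(X\vee Y)\cong H_*(C(X)\otimes C(Y))$.

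The main obstacle is the second step: matching the cellular differential of $SP^\infty(X)$ with Milgram's algebraic differential. Pinning down the signs, and especially the exact collision terms, demands a careful $S_m$-equivariant, cell-by-cell analysis near the diagonal strata of $X^m$; this is the technical core of Milgram's construction and the only place where the divided-power structure of $C(X)$ genuinely enters. Once that universal local model is established, the CW structure, the cellularity of $M$, and the behavior under wedge are comparatively routine.
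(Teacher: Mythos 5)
The paper does not prove this statement: Theorem~\ref{Mi} is quoted verbatim as Theorem~5.1 of Milgram's paper \cite{Mi} and used as a black box, so there is no internal proof to compare your proposal against.

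Evaluating the proposal on its own terms: the high-level frame---build a functorial cell structure on $SP^\infty(X)$ in which the weight filtration by the $SP^m(X)$ is a filtration by subcomplexes, identify the weight-graded cellular chain complex with $C(X)$, and derive the wedge formula from the natural homeomorphism $SP^\infty(X\vee Y)\cong SP^\infty(X)\times SP^\infty(Y)$---is the right shape, and broadly matches what Milgram does. But the proposal explicitly ``grants'' precisely the two steps that carry all the content. First, it is not automatic that the pieces you describe form an honest CW complex: the image of $\prod_i(\Delta^{\dim\sigma_i})^{n_i}$ in $SP^m(X)$ under the product characteristic map composed with the quotient by $\prod_i S_{n_i}$ is not a priori an open cell, since the symmetric group acts with nontrivial stabilizers along the diagonal strata; establishing a genuine regular cellular structure on symmetric products of simplicial complexes is a substantial portion of what Milgram actually proves, not a ``standard regularity argument'' one can wave at. Second, the identification of the cellular boundary with Milgram's algebraic differential---the ``collision terms'' and the divided-power structure that produce the torsion in $\mathcal R(X)$---is asserted as the technical core but never executed; reducing to the local model $(SP^m(\Delta^n),SP^{m-1}(\Delta^n))$ is a reasonable idea, but that local computation is exactly where the theorem lives.

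One further warning sign: your argument would in fact prove the stronger DGA-level statement $C(X\vee Y)\cong C(X)\otimes C(Y)$, whereas the theorem as quoted only asserts an isomorphism $\mathcal R(X\vee Y)\cong H_*(C(X)\otimes C(Y))$ of bigraded rings. If $C(-)$ were literally the cellular chain complex of a product-compatible CW model, Milgram would have stated the stronger version; the fact that he does not suggests your identification of $C(X)$ with a naive cellular chain complex is already too optimistic, and that $C(X)$ in \cite{Mi} is built algebraically from $C_*(X)$ and only agrees with the geometric side up to quasi-isomorphism.
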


\begin{defn}
We call a bigraded ring $A=\oplus A_{i,m}$ {\em $r$-connected} if $A_{i,m}=0$ for $i< m+r$.
\end{defn}
\begin{prop}\label{tensor}
The tensor product and the Tor product of 1-connected and 0-connected bigraded rings is 1-connected.
\end{prop}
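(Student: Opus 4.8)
The plan is a direct bookkeeping of bidegrees; there is essentially no analytic content. Recall that $A=\oplus A_{i,m}$ is $r$-connected exactly when $A_{i,m}=0$ for $i<m+r$, i.e. every nonzero homogeneous element of bidegree $(i,m)$ has $i\ge m+r$. So I would fix a $1$-connected bigraded ring $A$ and a $0$-connected one $B$, and record the two facts that will be used throughout: a nonzero homogeneous element of $A$ of bidegree $(j,\ell)$ satisfies $j\ge\ell+1$, and a nonzero homogeneous element of $B$ of bidegree $(k,n)$ satisfies $k\ge n$.

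For the tensor product, equip $A\otimes B$ with the usual bigrading $(A\otimes B)_{i,m}=\bigoplus_{j+k=i,\ \ell+n=m}A_{j,\ell}\otimes B_{k,n}$. A summand $A_{j,\ell}\otimes B_{k,n}$ is nonzero only when both factors are nonzero, and then $j\ge\ell+1$ and $k\ge n$, so $i=j+k\ge(\ell+1)+n=m+1$. Hence $(A\otimes B)_{i,m}=0$ whenever $i<m+1$, i.e. $A\otimes B$ is $1$-connected.

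For the Tor product the same count applies. A nonzero contribution to $\operatorname{Tor}(A,B)$ in bidegree $(i,m)$ comes from a term $\operatorname{Tor}_1^{\Z}(A_{j,\ell},B_{k,n})$ with $\ell+n=m$ and first index $i\ge j+k$ (the first index of such a term is at least $j+k$ in either of the two usual conventions for the bigrading on the Künneth Tor term, with or without a homological shift by one). Since $\operatorname{Tor}_1^{\Z}(A_{j,\ell},B_{k,n})$ vanishes as soon as either of the two groups does, such a term forces $j\ge\ell+1$ and $k\ge n$, whence $i\ge j+k\ge(\ell+1)+n=m+1$. Thus $\operatorname{Tor}(A,B)_{i,m}=0$ for $i<m+1$ and the Tor product is $1$-connected as well.

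The only step calling for any care is pinning down the bigradings on $A\otimes B$ and $\operatorname{Tor}(A,B)$ so that they match the decomposition of $\mathcal R(X\vee Y)$ furnished by Theorem~\ref{Mi}; once the conventions are fixed, the inequalities above are immediate. If one prefers to avoid invoking any convention for Tor, one can instead pick a short free bigraded resolution $0\to F_1\to F_0\to A\to 0$: since $A$ is $1$-connected one may take $F_0$ free on a bigraded basis of $A$ and $F_1$ free on a bigraded basis of its kernel, and kernels (hence the resolving modules) are then $1$-connected; tensoring with $B$ and using the tensor-product case already handled shows that $\operatorname{Tor}(A,B)$, being a subquotient of the $1$-connected module $F_1\otimes B$, is $1$-connected.
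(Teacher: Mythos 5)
Your proof is correct and takes essentially the same approach as the paper: a direct bookkeeping of bidegrees, applying the connectivity bound to each tensor or Tor factor and adding the inequalities to conclude $i\ge m+1$. Your extra remark about the possible homological shift in the bigrading of the Tor term is a harmless (and slightly more careful) observation; the paper's proof simply displays the un-shifted convention.
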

\begin{proof}
Let $A$ be 1-connected and $B$ be 0-connected.
By definition $$(A\otimes B)_{i,j}=\bigoplus_{k+r=i,s+t=j}A_{k,s}\otimes B_{r,t}$$ and 
$$Tor(A,B)_{i,j}=\bigoplus_{k+r=i,s+t=j}Tor(A_{k,s},B_{r,t}).$$
Whenever $k> s$ and $r\ge  t$, we have $i> j$. Hence for $i\le j$ we have $(A\otimes B)_{i,j}=0$ and $Tor(A,B)_{i,j}=0$.
\end{proof}

\begin{prop}\label{connected wedge}
If bigraded rings $\mathcal R(X)$ and $\mathcal R(Y)$ are 1-connected, then so is $\mathcal R(X\vee Y)$.
\end{prop}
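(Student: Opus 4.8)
The plan is to combine Milgram's computation in Theorem~\ref{Mi} with the algebraic K\"unneth theorem and Proposition~\ref{tensor}. By Theorem~\ref{Mi} there is a bigraded ring isomorphism $\mathcal R(X\vee Y)\cong H_*(C(X)\otimes C(Y))$, where $C(X)$ and $C(Y)$ are Milgram's bigraded differential algebras. These are chain complexes of free abelian groups, finitely generated in each bidegree, whose differential $\partial:C_{i,m}\to C_{i-1,m}$ preserves the weight grading $m$; hence for each fixed $m$ the weight-$m$ part of $C(X)\otimes C(Y)$ is the finite direct sum $\bigoplus_{s+t=m}C_{*,s}(X)\otimes C_{*,t}(Y)$ of ordinary tensor products of complexes of free abelian groups.

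First I would apply the K\"unneth theorem to each summand $C_{*,s}(X)\otimes C_{*,t}(Y)$ and reassemble over $s+t=m$. Keeping track of both gradings, this yields for every bidegree $(i,m)$ a short exact sequence
$$0\to (\mathcal R(X)\otimes\mathcal R(Y))_{i,m}\to H_{i,m}(C(X)\otimes C(Y))\to Tor(\mathcal R(X),\mathcal R(Y))_{i-1,m}\to 0,$$
in which the outer terms are the bigraded tensor and $Tor$ products of $\mathcal R(X)$ and $\mathcal R(Y)$ exactly as in Proposition~\ref{tensor}, and the shift by one in the homological degree of the $Tor$ term is the usual K\"unneth shift.

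Next I would note that a $1$-connected bigraded ring is in particular $0$-connected, since $A_{i,m}=0$ for $i<m+1$ forces $A_{i,m}=0$ for $i<m$. Thus $\mathcal R(X)$ and $\mathcal R(Y)$ are both $1$-connected and $0$-connected, so Proposition~\ref{tensor} shows that $\mathcal R(X)\otimes\mathcal R(Y)$ and $Tor(\mathcal R(X),\mathcal R(Y))$ are $1$-connected. Consequently, whenever $i<m+1$ we have $(\mathcal R(X)\otimes\mathcal R(Y))_{i,m}=0$ (as $i\le m$) and $Tor(\mathcal R(X),\mathcal R(Y))_{i-1,m}=0$ (as $i-1\le m$), and exactness of the displayed sequence forces $H_{i,m}(C(X)\otimes C(Y))=0$. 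By Theorem~\ref{Mi} this gives $\mathcal R(X\vee Y)_{i,m}=0$ for $i<m+1$, i.e. $\mathcal R(X\vee Y)$ is $1$-connected.

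The only genuinely delicate point is the compatibility of the K\"unneth theorem with the weight bigrading: one must check that the weight grading of Milgram's complex is respected by the differential (so that, for each $m$, the relevant complex is the finite direct sum above) and that the complex is free, hence flat, so that the K\"unneth theorem applies. Both are part of Milgram's construction in~\cite{Mi}; granting them, the argument reduces to the grading bookkeeping indicated above.
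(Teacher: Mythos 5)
Your proof is correct and takes essentially the same route as the paper: invoke Theorem~\ref{Mi}, apply the K\"unneth formula to identify $\mathcal R(X\vee Y)_{i,m}$ with $(\mathcal R(X)\otimes\mathcal R(Y))_{i,m}\oplus Tor(\mathcal R(X),\mathcal R(Y))_{i-1,m}$, and then conclude via Proposition~\ref{tensor}. The paper's own proof is more terse (a one-line display), so your extra care about the weight grading being preserved by the differential and the freeness/flatness needed for K\"unneth is welcome bookkeeping rather than a divergence; you also implicitly fix a small typographical slip in the paper (which writes the vanishing range as $i<m$ rather than $i<m+1$).
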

\begin{proof}
By Theorem~\ref{Mi}, the Kunneth formula, and Proposition~\ref{tensor} $$R_{i,m}(X\vee Y)=(\mathcal R(X)\otimes\mathcal R(Y))_{i,m}\oplus Tor(\mathcal R(X),\mathcal R(Y))_{i-1,m}=0$$
for $i<m$.
\end{proof}

\begin{prop}\label{h.e.}
The probability space $\mathcal B_n(X)$ is homotopy equivalent to the symmetric join $Symm^{\ast n}(X)$.
\end{prop}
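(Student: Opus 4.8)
The plan is to prove that the quotient map $q\colon Symm^{\ast n}(X)\to\mathcal B_n(X)$ from the Introduction is a homotopy equivalence; recall that, by construction, $q$ is a surjection with compact contractible fibers. First I would dispose of easy reductions. For $n=1$ the map $q$ is the identity of $X$, so assume $n\ge 2$. One may also assume $X$ is path connected: for a general CW complex $X$, a measure of support $\le n$ is the (weighted) join of its restrictions to the path components of $X$, so both $Symm^{\ast n}(X)$ and $\mathcal B_n(X)$ split as unions of CW subcomplexes indexed by the ways of distributing $\le n$ support points among the components, the map $q$ respects these decompositions, and one recovers the general case from the connected case by induction on $n$ and a Mayer--Vietoris/five-lemma gluing. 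Finally, since a CW complex is the directed union of its finite subcomplexes and both $Symm^{\ast n}(-)$ and $\mathcal B_n(-)$ carry such a union to a directed union along closed cofibrations, compatibly with $q$, it suffices to treat $X$ a finite connected simplicial complex.

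For finite $X$ I would show $q$ is a weak homotopy equivalence and then upgrade it. Both $Symm^{\ast n}(X)$ and $\mathcal B_n(X)$ are compact and the identifications defining $q$ are piecewise linear, so one can triangulate the target $\mathcal B_n(X)$ so that $q$ becomes a proper simplicial map (cf.~\cite{KK}); passing to a subdivision, I would arrange that for each simplex $\sigma$ the restriction $q\colon q^{-1}(\sigma)\to\sigma$ admits a fiberwise deformation retraction onto the fiber over the barycenter $b_\sigma$, so that every $q^{-1}(\sigma)$ is contractible, in particular acyclic in all degrees. Theorem~\ref{VB} then gives that $q_*\colon H_i(Symm^{\ast n}(X))\to H_i(\mathcal B_n(X))$ is an isomorphism in all degrees, and a van Kampen argument over the same cover (parallel to the proof of Proposition~\ref{preimages}) shows that $q$ induces an isomorphism on $\pi_1$ as well. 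Since $\mathcal B_n(X)$ is simply connected for $n\ge 2$ by Theorem~\ref{3.6}, so is $Symm^{\ast n}(X)$, and hence $q$ is a weak homotopy equivalence (Whitehead).

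To conclude, I would note that $Symm^{\ast n}(X)$ is a CW complex and $\mathcal B_n(X)$ has the homotopy type of a CW complex (it is a CW complex in the quotient topology, cf.~\cite{KK}, and an ANR in the metric topology), so by Whitehead's theorem the weak equivalence $q$ is a homotopy equivalence; together with the reductions of the first paragraph this proves the proposition for all CW complexes $X$. The step I expect to be the main obstacle is obtaining compatible triangulations and, more essentially, establishing the \emph{fiberwise} acyclicity of $q$ over simplices (not just over points): the subdivision of the base and the fiberwise deformation retraction onto $q^{-1}(b_\sigma)$ are the delicate points. An alternative that sidesteps this is to apply the classical theorem that a proper cell-like surjection between ANRs is a homotopy equivalence directly to $q$, whose point preimages are compact and contractible, hence cell-like; this route would instead require verifying that $Symm^{\ast n}(X)$ and $\mathcal B_n(X)$ are ANRs.
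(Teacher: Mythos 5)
Your concluding ``alternative'' is in fact the paper's entire proof: the argument given there is the single observation that the quotient $q\colon Symm^{\ast n}(X)\to\mathcal B_n(X)$ is a proper cell-like map (its point-preimages are compact and contractible, as already noted in the Introduction), with the implicit appeal to the standard fact that a proper cell-like surjection between ANRs is a homotopy equivalence. Your main route --- reduce to a finite connected complex, triangulate so $q$ is simplicial with contractible preimages of simplices, apply Theorem~\ref{VB} and a van Kampen argument, use Theorem~\ref{3.6} for simple connectivity, and finish with Whitehead --- is a genuinely different, more hands-on argument that stays within the combinatorial toolkit of Section~3 and avoids quoting cell-like map theory; the cost is exactly the obstacle you flag, namely producing the fiberwise retraction over simplices (contractibility of point-preimages alone does not feed Theorem~\ref{VB}, one needs acyclicity of $q^{-1}(\sigma)$ for every $\sigma$). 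What the paper's route buys is brevity; what yours buys is self-containment. Note also that the reductions in your first paragraph are not wasted even on the short route: cell-like maps between infinite-dimensional ANRs need not be homotopy equivalences, so passing to finite (hence finite-dimensional) subcomplexes --- or otherwise controlling dimension --- is needed there as well.
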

\begin{proof}
There is a proper cell-like map $$Symm^{\ast n}(X)\to\mathcal B_n(X).$$
\end{proof}

\begin{thm}[\cite{KK}, Theorem 1.3]\label{KK}
The reduced suspension $\Sigma Symm^{\ast n}(X)$ is homeomorphic to $\overline{SP}^n(\Sigma X):=SP^n(\Sigma X)/SP^{n-1}(\Sigma X)$.
\end{thm}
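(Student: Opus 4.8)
The plan is to reduce $\overline{SP}^n(\Sigma X)$ to a quotient of a smash power, peel off a trivial suspension coordinate, and then identify what remains with $Symm^{\ast n}(X)$ by an explicit homeomorphism.

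\textbf{Step 1 (fat wedge).} The inclusion $SP^{n-1}(\Sigma X)\hookrightarrow SP^n(\Sigma X)$ adjoining the base point has image equal to the image of the fat wedge $W^n=\{(z_1,\dots,z_n):z_i=\ast\text{ for some }i\}\subset(\Sigma X)^n$ under the orbit map $(\Sigma X)^n\to SP^n(\Sigma X)=(\Sigma X)^n/S_n$. Since $W^n$ is $S_n$-invariant and $(A/S_n)/(B/S_n)\cong(A/B)/S_n$ for an invariant subspace $B\subset A$, we get
$$\overline{SP}^n(\Sigma X)=SP^n(\Sigma X)/SP^{n-1}(\Sigma X)\cong(\Sigma X)^{\wedge n}/S_n.$$

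\textbf{Step 2 (peel off the diagonal circle).} As $\Sigma X=S^1\wedge X$, one has $(\Sigma X)^{\wedge n}=(S^1)^{\wedge n}\wedge X^{\wedge n}$ with $S_n$ acting diagonally. Identifying $(S^1)^{\wedge n}$ with the one-point compactification $(\mathbb{R}^n)^+$ of the permutation representation and splitting $\mathbb{R}^n=\mathbb{R}\langle(1,\dots,1)\rangle\oplus\mathbb{R}^{\overline\rho_n}$ gives $(S^1)^{\wedge n}=S^1\wedge(\mathbb{R}^{\overline\rho_n})^+$ with $S_n$ trivial on the $S^1$ summand. A trivial $S^1$-smash factor passes through the orbit space, so
$$\overline{SP}^n(\Sigma X)\cong\Sigma\Bigl(\bigl((\mathbb{R}^{\overline\rho_n})^+\wedge X^{\wedge n}\bigr)/S_n\Bigr).$$
Moreover $S_n$-equivariant radial projection from the barycenter identifies $(\mathbb{R}^{\overline\rho_n})^+$ with $\Delta^{n-1}/\partial\Delta^{n-1}$ (vertices permuted, barycenter fixed), bringing the simplex of weights of a point of $\ast^nX$ into the picture.

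\textbf{Step 3 (the explicit homeomorphism, and the obstacle).} It remains to prove $\Sigma\bigl((\Delta^{n-1}/\partial\Delta^{n-1}\wedge X^{\wedge n})/S_n\bigr)\cong\Sigma\bigl(\ast^nX/S_n\bigr)$. One cannot desuspend this: $\ast^nX=(\Delta^{n-1}\times X^n)/{\approx}$, where on a face $\{t_i=0\}$ the $i$-th $X$-coordinate is forgotten rather than the whole fiber collapsed, and this is not homeomorphic to $(\mathbb{R}^{\overline\rho_n})^+\wedge X^{\wedge n}$ in general; it is the outer suspension that supplies the room to interpolate between ``forgetting a coordinate'' and ``collapsing to the base point''. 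I would build a homeomorphism $\Sigma Symm^{\ast n}(X)\to\overline{SP}^n(\Sigma X)$ directly: a point represented by a weighted configuration $t_1x_1+\dots+t_nx_n$ with $\sum t_i=1$ together with a suspension parameter $r\in[0,1]$ is sent, after ordering the weights as $t_{(1)}\ge\dots\ge t_{(n)}$, to the configuration in $\Sigma X$ placing $x_{(i)}$ at height $r(t_{(1)}+\dots+t_{(i)})$; a vanishing weight then pushes point $i$ onto point $i-1$, and the suspension coordinate is what makes this merging well defined and compatible with the relation $0x=0y$. The inverse recovers the weights as successive height gaps and $r$ as the top height, and the case $n=1$ (both sides $\Sigma X$) anchors the scheme. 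The main — and essentially the only — difficulty is checking that this map is a well-defined homeomorphism along the degeneration strata, where weights vanish, weights coincide, or the configuration runs into the base point of $\Sigma X$ (that is, $t_1x_1+\dots+t_nx_n\to x_0$, or $r\to 0,1$): on those strata the naive recipe ``heights $=$ partial sums of weights'' is not even single-valued, and reconciling it — an $S_n$-equivariant, $n$-fold elaboration of the classical identification of $X\ast Y$ with $\Sigma(X\wedge Y)$ up to collapse of a contractible subcomplex — is exactly the combinatorial bookkeeping of~\cite{KK}. Granting it, the two composites are the identity by a direct check on representatives.
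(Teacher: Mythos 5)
The paper does not prove this statement; it is imported verbatim as Theorem~1.3 of~\cite{KK}, so there is no internal proof to compare against. As a stand-alone argument, your Steps~1 and~2 are correct and are the natural first reduction: collapsing the fat wedge gives $\overline{SP}^n(\Sigma X)\cong(\Sigma X)^{\wedge n}/S_n$, and peeling off the diagonal in $(S^1)^{\wedge n}\cong(\mathbb{R}^n)^+$ extracts a trivial outer $S^1$-smash, leaving $\Sigma\bigl(((\mathbb{R}^{\overline\rho_n})^+\wedge X^{\wedge n})/S_n\bigr)$. The whole content of the theorem is therefore concentrated in your Step~3, and that is where the argument breaks.

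The explicit map you propose in Step~3 is not well-defined. Fixing $r\in(0,1)$, if two positive weights coincide but the attached points differ, say $n=2$, $t_1=t_2=\tfrac12$, $x_1\neq x_2$, then the two orderings produce $\{(x_1,\tfrac{r}{2}),(x_2,r)\}$ and $\{(x_2,\tfrac{r}{2}),(x_1,r)\}$: genuinely distinct points of $SP^2(\Sigma X)\setminus SP^1(\Sigma X)$ that are \emph{not} identified in $\overline{SP}^2(\Sigma X)$. So the recipe ``order the weights, place $x_{(i)}$ at the $i$-th partial sum'' does not descend from ordered configurations to $Symm^{\ast n}(X)$, and continuity across the locus where two weights cross is likewise lost. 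You in fact acknowledge that verifying well-definedness and continuity on the degeneration strata ``is exactly the combinatorial bookkeeping of~\cite{KK}'' and you then \emph{grant} it; that is the entire theorem, not a routine check. To turn this into a proof one would need an $S_n$-equivariant comparison between the Milnor-join identifications (drop the $i$-th coordinate on the face $t_i=0$) and the smash-with-$(\mathbb{R}^{\overline\rho_n})^+$ identifications, carried out after the single outer suspension --- and the map effecting that comparison cannot be built by choosing an ordering of the weights.
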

\begin{prop}\label{Hurewicz}
For any connected CW complex $X$ for $n\ge 2$ the following are equivalent:

\rm{(1)} $\mathcal{B}_n(X)$ is $(n-1)$-connected;

\rm{(2)}  $Symm^{\ast n}(X)$ is $(n-1)$-connected;

\rm{(3)}  $\Sigma Symm^{\ast n}(X)$ is $n$-connected;

\rm{(4)} $H_i(SP^n(\Sigma X), SP^{n-1}(\Sigma X))=0$ for $i\le n$;

\rm{(5)} $\mathcal R(\Sigma X)$ is 1-connected.

\end{prop}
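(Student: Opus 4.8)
The plan is to prove the chain of equivalences by establishing a cycle, using the structural results already recorded above. First, the equivalence (1) $\Leftrightarrow$ (2) is immediate from Proposition~\ref{h.e.}: since $\mathcal B_n(X)$ and $Symm^{\ast n}(X)$ are homotopy equivalent (via the proper cell-like map), they have the same connectivity. Next, (2) $\Leftrightarrow$ (3) is the elementary fact that a space $W$ is $(n-1)$-connected if and only if its reduced suspension $\Sigma W$ is $n$-connected; one direction is the suspension theorem for $\pi_i$ together with the Hurewicz/homology suspension isomorphism, and the converse follows because $\widetilde H_i(\Sigma W)\cong\widetilde H_{i-1}(W)$ and both spaces are simply connected once $n\ge 2$ (here I use $n\ge 2$ to guarantee $\Sigma W$ and, via Theorem~\ref{3.6}-type arguments, $W$ are simply connected so Hurewicz applies).

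Then (3) $\Leftrightarrow$ (4) comes from Theorem~\ref{KK}: the homeomorphism $\Sigma Symm^{\ast n}(X)\cong \overline{SP}^n(\Sigma X)=SP^n(\Sigma X)/SP^{n-1}(\Sigma X)$ converts the statement ``$\Sigma Symm^{\ast n}(X)$ is $n$-connected'' into ``$\overline{SP}^n(\Sigma X)$ is $n$-connected''. Since $\Sigma X$ is simply connected, so is $SP^n(\Sigma X)$ for all $n$, and the quotient $\overline{SP}^n(\Sigma X)$ is simply connected as well; hence by the Hurewicz theorem its $n$-connectedness is equivalent to the vanishing of $\widetilde H_i(\overline{SP}^n(\Sigma X))=H_i(SP^n(\Sigma X),SP^{n-1}(\Sigma X))$ for $i\le n$, which is exactly (4). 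Finally, (4) $\Leftrightarrow$ (5) is a direct unwinding of the definition of the bigraded ring $\mathcal R(\Sigma X)=\bigoplus_{i,m}H_i(SP^m(\Sigma X),SP^{m-1}(\Sigma X))$: saying $\mathcal R(\Sigma X)$ is $1$-connected means $R_{i,m}(\Sigma X)=0$ whenever $i<m+1$, i.e. $i\le m$; specializing $m=n$ gives $H_i(SP^n(\Sigma X),SP^{n-1}(\Sigma X))=0$ for $i\le n$, which is (4). For the reverse implication one needs the vanishing for \emph{all} $m$, not just $m=n$; but this follows from (4) applied with $n$ replaced by each $m\le n$ — and here I should note that one must also observe $R_{i,m}(\Sigma X)=0$ for $i\le m$ automatically for small $m$ (for $m=1$, $R_{i,1}(\Sigma X)=\widetilde H_i(\Sigma X)$ vanishes for $i\le 1$ since $\Sigma X$ is connected and simply connected), so the equivalence is really between (4) ``for this particular $n$'' and the graded statement, read across all degrees. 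I would phrase (5) carefully as the statement that $R_{i,m}(\Sigma X)=0$ for $i\le m$, matching the $r=1$ case of the definition of $r$-connectedness.

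The main obstacle I anticipate is the bookkeeping in the suspension step (2) $\Leftrightarrow$ (3) and in handling the low-dimensional / simple-connectivity hypotheses uniformly: one must be careful that $Symm^{\ast n}(X)$ is simply connected (which for $n\ge 2$ follows from Theorem~\ref{3.6} via Proposition~\ref{h.e.}, since $\mathcal B_n(X)$ is simply connected), so that Hurewicz converts connectivity into homology-connectivity cleanly, and similarly that all the symmetric products of the suspension are simply connected. Once these simple-connectivity facts are in place, each individual equivalence reduces to a citation (Proposition~\ref{h.e.}, Theorem~\ref{KK}), the suspension isomorphism, and the definition of $\mathcal R$; none of the steps requires a genuinely new idea. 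I would therefore organize the write-up as: (1)$\Leftrightarrow$(2) by Proposition~\ref{h.e.}; (2)$\Leftrightarrow$(3) by the suspension theorem plus the remark that both spaces are simply connected; (3)$\Leftrightarrow$(4) by Theorem~\ref{KK} and Hurewicz; (4)$\Leftrightarrow$(5) by the definition of $\mathcal R(\Sigma X)$ and $1$-connectedness, checking the low-$m$ degrees separately.
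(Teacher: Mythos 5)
Your proof follows essentially the same route as the paper's: (1)$\Leftrightarrow$(2) from Proposition~\ref{h.e.}, (2)$\Leftrightarrow$(3) using simple connectivity of $Symm^{\ast n}(X)$ and the suspension isomorphism, (3)$\Leftrightarrow$(4) from Theorem~\ref{KK} together with Hurewicz, and (4)$\Leftrightarrow$(5) by unwinding the definition of $\mathcal R(\Sigma X)$. You are in fact slightly more careful than the paper, which dismisses (4)$\Leftrightarrow$(5) as ``the definition'': you correctly flag that (5) demands vanishing in every column $m$ while (4) concerns only $m=n$, and although your fix (``apply (4) with each $m\le n$'') still leaves the columns $m>n$ unaddressed, the cleanest reading is that the proposition is being asserted for all $n\ge 2$ simultaneously, and in any case the only implication the paper subsequently uses is (5)$\Rightarrow$(1), which is unproblematic.
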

\begin{proof}
From Proposition~\ref{h.e.} and Theorem~\ref{KK} we obtain (1) $\Leftrightarrow$ (2) $\Leftrightarrow$ (3) $\Rightarrow$ (4) where the implication (2) $\Leftarrow$ (3) follows from the fact that
the space $Symm^{\ast n}(X)$ is simply connected connected for connected $X$~\cite{KK}.
The implication (3) $\Leftarrow (4)$ follows from the Hurewicz theorem and Theorem~\ref{KK}.
The equivalence (4) $\Leftrightarrow$ (5) is the definition.
\end{proof}

\begin{cor}\label{ex}
The graded ring $\mathcal R(\Sigma X)$ is 1-connected whenever the space $X$ is 1-connected.
\end{cor}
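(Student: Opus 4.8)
The plan is to read the statement off from Proposition~\ref{Hurewicz} together with the connectivity result recalled at the beginning of this section. By the definition of an $r$-connected bigraded ring, $\mathcal R(\Sigma X)$ is $1$-connected exactly when $R_{i,m}(\Sigma X)=H_i(SP^m(\Sigma X),SP^{m-1}(\Sigma X))=0$ for every $i\le m$, and this can be checked one value of $m\ge 1$ at a time.

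For $m=1$ I would argue directly: $R_{i,1}(\Sigma X)=\widetilde H_i(\Sigma X)\cong\widetilde H_{i-1}(X)$ by the suspension isomorphism, and this vanishes for $i\le 1$ since $X$ is nonempty and connected. For $m\ge 2$ I would use the hypothesis that $X$ is simply connected: by \cite{KK}, Theorem~1.2, the space $\mathcal B_m(X)$ is then $(2m-1)$-connected, hence $(m-1)$-connected because $2m-1\ge m-1$. Feeding this into the equivalence of conditions (1) and (4) of Proposition~\ref{Hurewicz} (applied with $n=m$) yields $H_i(SP^m(\Sigma X),SP^{m-1}(\Sigma X))=0$ for $i\le m$. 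Combining the two cases gives $R_{i,m}(\Sigma X)=0$ for all $m\ge 1$ and $i\le m$, that is, $\mathcal R(\Sigma X)$ is $1$-connected.

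I expect no real obstacle here: the argument amounts to matching the grading convention ``$r$-connected means $A_{i,m}=0$ for $i<m+r$'' against the geometric connectivity statements, the only input beyond Proposition~\ref{Hurewicz} being the simply connected case of the connectivity of $\mathcal B_n(X)$, namely \cite{KK}, Theorem~1.2, which it is legitimate to invoke since the rest of this section is devoted precisely to removing the simple-connectivity hypothesis.
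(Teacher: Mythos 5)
Your proof is correct and follows essentially the same route as the paper: invoke Theorem~1.2 of \cite{KK} for simply connected $X$ to get that $\mathcal B_n(X)$ is $(2n-1)$-connected, hence $(n-1)$-connected, and then apply Proposition~\ref{Hurewicz}. The only difference is that you explicitly dispatch the $m=1$ case by hand (since Proposition~\ref{Hurewicz} is stated only for $n\ge 2$), a small edge case the paper's one-line proof glosses over; this makes your write-up slightly more careful but not a different argument.
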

\begin{proof}
Indeed, for simply connected $X$  the space $\mathcal B_n(X)$ is $(2n-1)$-connected
by Theorem 1.2 of ~\cite{KK}. Then by Proposition~\ref{Hurewicz} $\mathcal R(\Sigma X)$ is 1-connected.
\end{proof}

In view of Proposition~\ref{Hurewicz} to prove Theorem~\ref{connectivity1} it suffices to show that the bigraded ring $\mathcal R(\Sigma X)$ is 1- connected.
In~\cite{D} Dold proved that for finite complexes  $\mathcal R(X)=\mathcal R(Y)$ whenever $H_*(X)=H_*(Y)$.
By the Dold theorem  one can replace the connected space $X$ by the finite wedge of Moore spaces
$$Y= M(H_2(\Sigma X,2)\vee\dots \vee M(H_k(\Sigma X), k)$$ where
$k=\dim X+1$. Note that $ M(H_3(\Sigma X,3)\vee\dots \vee M(H_k(\Sigma X), k)=\Sigma Z$ with simply connected $Z$.

Thus, in view of Proposition~\ref{connected wedge} and Corollary~\ref{ex} it suffices to prove
that $\mathcal R(M(H_1(X),2))$ is 1-connected. Since our complex $X$ is finite, Proposition~\ref{connected wedge} reduces this  to the problem of 1-conectedness of the bigraded rings
$\mathcal R(S^2)$ and $\mathcal R(M(\mathbb Z_h,2))$, where $h=p^k$ for prime $p$.
Since $SP^n(S^2)=\mathbb CP^n$ and the inclusions $\xi^n_{n+1}:\mathbb CP^n\to \mathbb CP^{n+1}$ are the natural inclusions, $\mathcal R(S^2)$ is 1-connected.
\begin{prop}
The bigraded ring $\mathcal R(M(\mathbb Z_h,2))$, where $h=p^k$ for prime $p$, is 1-connected.
\end{prop}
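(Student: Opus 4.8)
The plan is to compute, via the skeletal filtration of symmetric products, enough of the homology of $SP^{m}(M(\mathbb Z_h,2))$, using as input the $1$-connectedness of $\mathcal R(S^2)$ and of $\mathcal R(S^3)$. Write $M=M(\mathbb Z_h,2)$ and $\overline{SP}^{\,m}(M)=SP^m(M)/SP^{m-1}(M)$, so that $R_{i,m}(M)=\tilde H_i(\overline{SP}^{\,m}(M))$; the statement ``$\mathcal R(M)$ is $1$-connected'' is then exactly the assertion that $\tilde H_i(\overline{SP}^{\,m}(M))=0$ for all $m\ge1$ and $i\le m$, and this is what I would establish. (Here $h=p^k$ is irrelevant; it is simply the case produced by the reduction leading to the statement.)

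Take $S^2\subset M$ to be the $2$-skeleton, so $M/S^2=S^3$. Then $SP^m(M)$ carries its standard skeletal filtration $SP^m(S^2)=F_0\subset F_1\subset\cdots\subset F_m=SP^m(M)$, where $F_k$ is the subcomplex of configurations with at most $k$ points outside $S^2$; the inclusions are cofibrations and $F_k/F_{k-1}\cong\overline{SP}^{\,k}(S^3)\wedge SP^{m-k}(S^2)_{+}$ (collapsing $S^2$ turns ``at least one point lies on $S^2$'' into ``at least one point is the basepoint''). Dividing by $SP^{m-1}(M)$ induces a filtration of $\overline{SP}^{\,m}(M)$ whose successive subquotients are $\overline{SP}^{\,k}(S^3)\wedge\overline{SP}^{\,m-k}(S^2)$, $k=0,\dots,m$, with the convention $\overline{SP}^{\,0}(\,\cdot\,)=S^0$, so that the bottom piece is $\overline{SP}^{\,m}(S^2)$ and the top piece is $\overline{SP}^{\,m}(S^3)$.

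Now $\overline{SP}^{\,m-k}(S^2)=\mathbb{CP}^{m-k}/\mathbb{CP}^{m-k-1}=S^{2(m-k)}$ since $SP^{j}(S^2)=\mathbb{CP}^{j}$; and by \corref{ex}, applied to $S^3=\Sigma S^2$ with $S^2$ simply connected, $\mathcal R(S^3)$ is $1$-connected, i.e. $\tilde H_j(\overline{SP}^{\,k}(S^3))=R_{j,k}(S^3)=0$ for $j\le k$. Therefore the $k$-th subquotient $\overline{SP}^{\,k}(S^3)\wedge S^{2(m-k)}$ has $\tilde H_i=0$ whenever $i-2(m-k)\le k$, that is, whenever $i\le 2m-k$; since $0\le k\le m$ this holds in particular for all $i\le m$ (for $k=0$ the piece is $S^{2m}$, which is $(2m-1)$-connected). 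Running an ascending induction over the filtration using the homology long exact sequences of the pairs $(\overline F_k,\overline F_{k-1})$, starting from the bottom piece $S^{2m}$, yields $\tilde H_i(\overline{SP}^{\,m}(M))=0$ for $i\le m$. Hence $\mathcal R(M(\mathbb Z_h,2))$ is $1$-connected; in fact the argument gives the stronger vanishing $R_{i,m}=0$ for all $i<2m$.

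The only step that is not purely formal is the description of the skeletal filtration of $\overline{SP}^{\,m}(M)$ with subquotients $\overline{SP}^{\,k}(S^3)\wedge\overline{SP}^{\,m-k}(S^2)$, together with the fact that its stages are cofibrations; I expect this to be the main thing to pin down carefully, though it is classical and one can cite Nakaoka~\cite{Na} or Milgram~\cite{Mi} (the open stratum of $SP^m(M)$ with exactly $k$ points off $S^2$ is $SP^{k}(M\smallsetminus S^2)\times SP^{m-k}(S^2)$, and one collapses $S^2$). An alternative that stays within the algebraic machinery already in place is to use \theoref{Mi}: $\mathcal R(M(\mathbb Z_h,2))\cong H_*(C(M(\mathbb Z_h,2)))$, and since the reduced cellular chains of $M(\mathbb Z_h,2)$ sit only in degrees $2$ and $3$, every multiplicative generator of Milgram's bigraded complex $C(M(\mathbb Z_h,2))$ has homological degree at least twice its weight, whence $C(M(\mathbb Z_h,2))_{i,m}=0$ for $i<2m$ and the conclusion is immediate.
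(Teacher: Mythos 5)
Your proof is correct but takes a genuinely different route from the paper's. The paper works inside Milgram's algebraic machinery: it identifies $\mathcal R(M(\mathbb Z_h,2))$ with the homology of the bar construction $B(E_h(1,1))$, decomposes $E_h(1,1)$ (as a bigraded algebra) as $E(1,1)\otimes P(2,1)$, invokes Proposition~\ref{tensor} to see it is $1$-connected, and then checks by hand that the bar construction preserves $1$-connectedness. Your argument is instead geometric: it uses the classical filtration of $SP^m(M)$ by the number of points off the $2$-skeleton $S^2$, with subquotients $\overline{SP}^{\,k}(S^3)\wedge\overline{SP}^{\,m-k}(S^2)$, and deduces the vanishing $\tilde H_i(\overline{SP}^{\,m}(M))=0$ for $i\le m$ purely from the already-established $1$-connectedness of $\mathcal R(S^2)$ and $\mathcal R(\Sigma S^2)$ together with long exact sequences. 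What the two approaches buy is different: the paper's method stays entirely inside the bigraded DGA formalism already set up (at the cost of unwinding $E_h(1,1)$ and $B(-)$), while yours is more self-contained and elementary given the cofibration filtration, and it transparently explains why only $\mathcal R(S^2)$ and $\mathcal R(S^3)$ are needed; as you note, the hypothesis $h=p^k$ plays no role in either argument.

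One small correction to your closing remark: the argument as you run it does not give $R_{i,m}=0$ for all $i<2m$. The bottleneck is the top stratum $k=m$, for which Corollary~\ref{ex} only yields $\tilde H_i(\overline{SP}^{\,m}(S^3))=0$ for $i\le m$; so the filtration argument, with the inputs you cite, gives exactly $R_{i,m}(M)=0$ for $i\le m$, no more. To get the extra range you would need the stronger fact that $\overline{SP}^{\,m}(S^3)\simeq\Sigma\,Symm^{\ast m}(S^2)$ is $2m$-connected, which follows from Theorem 1.2 of~\cite{KK} (cited in the paper only through Corollary~\ref{ex}, which retains only $1$-connectedness). Likewise, in your alternative sketch via Milgram's $C(X)$, the generator $e\in E_h(1,1)_{1,1}$ has homological degree equal to (not twice) its weight, so $C(M(\mathbb Z_h,2))_{i,m}$ is not concentrated in $i\ge 2m$; $1$-connectedness of $B(E_h(1,1))$ is still what comes out, which is what the proposition asserts. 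None of this affects the correctness of your main argument.
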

\begin{proof}
We may assume that $M(\mathbb Z_h,2)=\Sigma(S^1\cup_{p^k}B^2)$.

For each Moore space $M(G,n)$ with finitely generated group $G$
Milgram constructed in~\cite{Mi} a bigraded differential algebras whose homology bigraded
isomorphic to the rings $\mathcal R(M(G,n))$.
In particular,
$$\mathcal R(\Sigma(S^1\cup_{p^k}B^2)\cong H_*(B(E_{h}(1,1)))$$ and $\mathcal R(S^1)\cong H_*(E(1,1))$,
where the algebra $E(1,1)$ is generated by one element $e\in E(1,1)_{1,1}$ with $e\cdot e=0$.
Clearly, it is 0-connected. 
If one forget about the differential part, the bigraded algebra $E_{h}(1,1)$ is isomorphic to the tensor product $E(1,1)\otimes P(2,1)$
where $P(2,1)$ is the divided polynomial algebra  generated by a single generator
in $P(2,1)_{2i,i}$ for each $i$ and, hence, is 1-connected. Since $E(1,1)$ is 0-connected, by Proposition~\ref{tensor}
we obtain that $E_{h}(1,1)$ is 1-connected
bigraded algebras.

If $A$ is an augmented bigraded ring, i.e., there is a bigraded ring homomorphism
$\epsilon:A\to\Lambda$ where $\Lambda_{0,0}=\mathbb Z$ and $\Lambda_{i,j}=0$
for all other indexes. Let $\bar A$ be the augmentation ideal. Then by definition~\cite{Mi}
$$
B(A)=\Lambda\oplus\bar A\oplus(\bar A\otimes\bar A)\oplus(\bar A\otimes\bar A\otimes\bar A)\oplus\dots$$
where  the bigrading is done by the following rule: $a_1\otimes\cdots\otimes a_n\in (\bar A\otimes\cdots\otimes\bar A)_{i,j}$ 
belongs to $BA_{i+n,j}$. It is an easy observation that if $A$ is 1-connected then so is $B(A)$.
Since homology of 1-connected bigraded differential algebra is a bigraded 1-connected ring, the result follows.
\end{proof}
We obtain the following
\begin{prop}\label{final}
For a finite connected  CW-complex $X$ the graded ring $\mathcal R(\Sigma X)$ is 1-connected and, hence, 
$\mathcal B_n(X)$ is $(n-1)$-connected.
\end{prop}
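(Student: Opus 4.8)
The plan is to assemble the reductions established above into one statement. By the equivalence $(1)\Leftrightarrow(5)$ of \propref{Hurewicz}, it suffices to show that the bigraded ring $\mathcal R(\Sigma X)$ is $1$-connected; the assertion about $\mathcal B_n(X)$ then follows for all $n\ge 2$, while for $n\le 1$ it is vacuous since $\mathcal B_1(X)=X$ is connected.

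First I would simplify the homology. Because $X$ is finite, $H_*(\Sigma X)$ is finitely generated and concentrated in degrees $2,\dots,k$ with $k=\dim X+1$, so $\Sigma X$ has the same integral homology as the finite wedge $Y=M(H_2(\Sigma X),2)\vee\cdots\vee M(H_k(\Sigma X),k)$, whence $\mathcal R(\Sigma X)\cong\mathcal R(Y)$ by Dold's theorem~\cite{D}. The summands $M(H_i(\Sigma X),i)$ with $i\ge 3$ together form a suspension $\Sigma Z$ of a simply connected complex $Z$, so $\mathcal R(\Sigma Z)$ is $1$-connected by \corref{ex}; combining this with the wedge formula \propref{connected wedge} leaves only the bottom Moore space $M(H_2(\Sigma X),2)=M(H_1(X),2)$ to analyze.

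For the last step I would split $H_1(X)$, a finitely generated abelian group, into cyclic summands, using $M(A\oplus B,2)\simeq M(A,2)\vee M(B,2)$ and $\mathbb Z_{ab}\cong\mathbb Z_a\oplus\mathbb Z_b$ for coprime $a,b$ to arrange that each factor is $\mathbb Z$ or $\mathbb Z_{p^\ell}$ for a prime $p$. One more application of \propref{connected wedge} then reduces the claim to the two building blocks $\mathcal R(M(\mathbb Z,2))=\mathcal R(S^2)$ and $\mathcal R(M(\mathbb Z_{p^\ell},2))$: the first is $1$-connected because $SP^n(S^2)=\mathbb CP^n$ with the standard structure inclusions $\xi^n_{n+1}$, so $R_{i,n}(S^2)=H_i(\mathbb CP^n,\mathbb CP^{n-1})=0$ for $i<2n$, and the second is exactly the preceding proposition. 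Putting these together gives $1$-connectedness of $\mathcal R(\Sigma X)$, and hence $(n-1)$-connectedness of $\mathcal B_n(X)$.

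Once \propref{Hurewicz} and the wedge formula are in hand, the argument is essentially bookkeeping about wedges, suspensions, and the Dold invariance of $\mathcal R$. The only genuinely non-formal input — and therefore the main obstacle — is the computation that $\mathcal R(M(\mathbb Z_{p^\ell},2))$ is $1$-connected, which rests on Milgram's bar-construction model and the factorization $E_{p^\ell}(1,1)\cong E(1,1)\otimes P(2,1)$ into a $0$-connected and a $1$-connected bigraded algebra; that piece, however, has already been handled above, so the present proof is simply the act of combining the preceding results.
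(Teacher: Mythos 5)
Your proof is correct and follows essentially the same route the paper takes: it is precisely the chain of reductions recorded in the discussion preceding the proposition (Dold's theorem to replace $\Sigma X$ by a wedge of Moore spaces, Corollary~\ref{ex} for the simply connected part, Proposition~\ref{connected wedge} to split off wedge summands, and then the two base cases $\mathcal R(S^2)$ and $\mathcal R(M(\mathbb Z_{p^k},2))$), assembled via Proposition~\ref{Hurewicz}. The paper leaves this assembly implicit rather than writing a separate proof, and your version makes it explicit without deviating from the method.
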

{\em Proof of Theorem~\ref{connectivity1}.} 
Since $X$ is a direct limit of finite connected CW-complexes,
in view of the equality $$\pi_i(\mathcal B_n(X))=\lim_{\rightarrow}\pi_i(\mathcal B_n(X_k)),$$
we obtain that $\mathcal B_n(X)$ is $(n-1)$-connected. \qed

\section{$d\TC$ of spaces}

\begin{prop}\label{CW}
Suppose that $d\cat X\le n$ for an aspherical CW-complex $X$, then $d\cat X^{(n+1)}\le n$.
\end{prop}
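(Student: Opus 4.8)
The plan is to refine the section coming from the hypothesis to one over the $(n+1)$-skeleton. Write $Y=X^{(n+1)}$; if $\dim X\le n+1$ there is nothing to prove, so assume $\dim X\ge n+2$, and fix $x_0$ to be a $0$-cell. By \theoref{chardcat} the hypothesis $d\cat X\le n$ gives a section $s\colon X\to P_0(X)_{n+1}$ of $\mathcal B_{n+1}(p_0^X)$, and it suffices to produce a section of $\mathcal B_{n+1}(p_0^Y)\colon P_0(Y)_{n+1}\to Y$. Inclusion of path spaces $P^Y(y,x_0)\hookrightarrow P^X(y,x_0)$ is fibrewise over $Y$ and induces a map of Hurewicz fibrations over $Y$
\[
\iota\colon P_0(Y)_{n+1}\longrightarrow i^{\,*}\bigl(P_0(X)_{n+1}\bigr)=:E',\qquad i\colon Y\hookrightarrow X,
\]
where $E'\to Y$ is the pullback of $\mathcal B_{n+1}(p_0^X)$. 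The restriction $s|_Y$ is a section of $E'$, so I am reduced to lifting $s|_Y$ through $\iota$; by the homotopy lifting property such a lift yields a genuine section of $\mathcal B_{n+1}(p_0^Y)$, and then $d\cat Y\le n$ by \theoref{chardcat}.

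To control $\iota$ I examine it on fibres. Since $X$ is aspherical, $P^X(y,x_0)$ is homotopy discrete with $\pi_0\cong\Gamma:=\pi_1(X)$, so the fibre of $E'\to Y$ is homotopy equivalent to $\mathcal B_{n+1}(\Gamma)=\Delta(\Gamma)^{(n)}$. Because $n\ge 1$ we have $\pi_1(Y)\cong\Gamma$, and because the inclusion $Y\hookrightarrow X$ is $(n+1)$-connected each path component of $P^Y(y,x_0)$ has $\pi_i=\pi_{i+1}(Y)=\pi_{i+1}(X)=0$ for $1\le i\le n-1$, hence is $(n-1)$-connected. Consequently $\iota$ on fibres is, up to homotopy, the map $q_{n+1}=\mathcal B_{n+1}(q)\colon\mathcal B_{n+1}(P^Y(y,x_0))\to\Delta(\Gamma)^{(n)}$ collapsing path components, to which \propref{preimages} applies: it induces a $\pi_i$-isomorphism over every simplex of $\Delta(\Gamma)$ for $i\le n$. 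Passing to the colimit over finite subsimplices of $\Delta(\Gamma)^{(n)}$, the fibre map $\iota_F$ is a $\pi_i$-isomorphism for $i\le n$, $\Gamma$-equivariantly; in particular its homotopy fibre $\Phi$ is $(n-1)$-connected.

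Now I conclude by obstruction theory. First lift $s|_{Y^{(n)}}$ through $\iota$ over the $n$-skeleton: the obstructions lie in $H^{k+1}(Y^{(n)};\pi_k\Phi)$, which vanish for $k\le n-1$ (as $\Phi$ is $(n-1)$-connected) and for $k\ge n$ (as $\dim Y^{(n)}=n$), so we obtain a section $\sigma_n\colon Y^{(n)}\to P_0(Y)_{n+1}$ with $\iota\circ\sigma_n=s|_{Y^{(n)}}$. The fibre $\mathcal B_{n+1}(\Omega Y)$ of $\mathcal B_{n+1}(p_0^Y)$ is $(n-1)$-connected by \theoref{connectivity} and $\dim Y\le n+1$, so the only obstruction to extending $\sigma_n$ to a section over all of $Y$ is a single class $\mathfrak o\in H^{n+1}\bigl(Y;\pi_n\mathcal B_{n+1}(\Omega Y)\bigr)$ with local coefficients. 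By naturality of obstruction classes under the fibrewise map $\iota$, $(\iota_F)_*\mathfrak o$ is the obstruction to extending $\iota\circ\sigma_n=s|_{Y^{(n)}}$ over $Y$; this vanishes because $s|_Y$ is such an extension. Since $(\iota_F)_*\colon\pi_n\mathcal B_{n+1}(\Omega Y)\to\pi_n\Delta(\Gamma)^{(n)}$ is an isomorphism of $\Gamma$-modules, the induced map on $H^{n+1}(Y;-)$ is injective, whence $\mathfrak o=0$ and $\sigma_n$ extends to the required section. The step most likely to need care is the fibre analysis: identifying $\iota_F$ with $q_{n+1}$, matching indices so as to invoke \propref{preimages}, checking the colimit argument over $\Delta(\Gamma)^{(n)}$, and verifying that obstruction classes behave naturally under $\iota$; the remaining estimates are routine dimension counting.
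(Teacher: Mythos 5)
Your proof is correct and follows essentially the same route as the paper: both compare the fibration over $Y=X^{(n+1)}$ with the pullback of the one over $X$, identify the induced map on fibres with $q_{n+1}$ and apply Proposition~\ref{preimages}, then use obstruction theory over the $(n+1)$-dimensional skeleton. The only cosmetic difference is at the last step: the paper additionally notes that $q_{n+1}$ admits a section (hence is surjective on $\pi_{n+1}$) to make the homotopy fibre $n$-connected and kill the top obstruction outright, whereas you kill it via injectivity of $(\iota_F)_*$ on $\pi_n$ and naturality of the obstruction class, which works just as well.
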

\begin{proof}
In view of Theorem~\ref{chardcat} in the pull-back  diagram
\[
\xymatrix{P_0(X^{(n+1)})_{n+1}\ar[dr]^{\mathcal B_{n+1}(p_0)}\ar[r]^q 
& j^*P_0(X)_{n+1}\ar[d]^{\xi}\ar[r]^{j'} & P_0(X)_{n+1}\ar[d]^{\mathcal B_{n+1}(p_0^X)}\\
& X^{(n+1)}\ar[r]^j & X}
\]
the map $\xi$ has a section $s:X^{(n+1)}\to j^*P_0(X)_{n+1}$. 
The map $q$ between the fibers of $\mathcal B_{n+1}(p_0)$ and $\xi$ is the inclusion $q|:{\mathcal B}_{n+1}(\Omega X^{(n+1)})\to \mathcal B_{n+1}(\Omega X)$.
We note that the loop space on CW-complex is homotopy equivalent to a CW-complex. Since $X$ is aspherical, the loop space $\Omega X$ is homotopy equivalent to a discrte space $C$.
Then the restriction $q|$ is homotopy equivalent to the map $q_{n+1}:{\mathcal B}_{n+1}(\Omega X^{(n+1)})\to \mathcal B_{n+1}(C)$ from Proposition~\ref{preimages}.
By Proposition~\ref{preimages}, $q_{n+1}$ induces an isomorphism of  homotopy groups 
in dimension $\le n$. Since $q_{n+1}$ admits a section, it induces an epimorphism of homotopy groups in all dimension. Then $q|$ is an $n$-equivalence.
By the Five Lemma the map $q$ is an $n$-equivalence, i.e., it induces isomorphism of homotopy groups in dimension $\le n$ and an epimorphism in dimension $n+1$.
Then the map $s$ has a homotopy lift.
Since the map $\mathcal B_{n+1}(p_0):P_0(X^{(n+1)})\to X^{(n+1)}$ is a fibration~\cite{DJ} that admits a homotopy section, it admits a real section. By Theorem~\ref{chardcat} $d\cat X^{(n+1)}\le n$.
\end{proof}

\subsection{$d\TC$ of wedge sum}
Let $j:X\times Y\to (X\vee Y)\times(X\vee Y)$ be the inclusion.
\begin{prop}\label{h}
There is a fiber-wise embedding $h: P_0(X\times Y)\to P(X\vee Y)$  making the diagram 
$$
\begin{CD}
P_0(X\times Y) @>h>> P(X\vee Y)\\
@Vp_0VV @V\bar pVV\\
X\times Y @>j>> (X\vee Y)^2\\
\end{CD}
$$
commutative and a fiber-wise retraction $r:\bar p^{-1}(j(X\times Y))\to P_0(X\times Y)$.
\begin{proof}
Let $(v,v)\in X\times Y$ be the base point and $\phi=(\phi_X,\phi_Y):I\to X\times Y$ be a path ending at it. We define a map $$h: P_0(X\times Y)\to P(X\vee Y)$$ by the formula $h(\phi)=\phi_X\cdot\bar\phi_Y$. 
Then the diagram commutes:
$$jp_0(\phi)=(\phi_X(0),\phi_Y(0))=(\phi_X\cdot\bar\phi_Y(0),\phi_X\cdot\bar\phi_Y(1))=\bar p(\phi_X\cdot\bar\phi_Y)= \bar ph(\phi).$$
We define a map $r:P(X\vee Y)\to P_0(X\times Y)$ as $r(\psi)=(r_X\psi,\overline{r_Y\psi})$ where $r_X:X\vee Y\to X$ and $r_Y: X\vee Y\to Y$ are the collapsing maps.
It is easy to check that $r$ is a retraction:
$$rh(\phi_X,\phi_Y)=h(\phi_x\cdot\bar\phi_Y)=(r_X\phi_X\cdot\bar\phi_Y,\overline{r_Y\phi_X\cdot\bar\phi_Y})=(\phi_X,\phi_Y).$$
\end{proof}
\end{prop}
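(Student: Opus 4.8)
The plan is to exploit the wedge structure: both $X$ and $Y$ embed in $X\vee Y$, meeting only at the wedge point $v$, so a path in $X\vee Y$ running from a point of $X$ to a point of $Y$ can be assembled from (or decomposed into) a pair of paths, one in each summand, both ending at $v$. I would take the base point of $X\times Y$ to be $(v,v)$ and identify $P_0(X\times Y)$ with the set of pairs $(\phi_X,\phi_Y)$ where $\phi_X\in P_0(X)$ and $\phi_Y\in P_0(Y)$.

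First I would define $h(\phi_X,\phi_Y)=\phi_X\cdot\bar\phi_Y$, the concatenation of $\phi_X$ with the reverse of $\phi_Y$; since $\phi_X(1)=v=\bar\phi_Y(0)$ this is a well-defined path in $X\vee Y$ from $\phi_X(0)$ to $\phi_Y(0)$, and it depends continuously on $(\phi_X,\phi_Y)$. Commutativity of the square then reduces to the endpoint computation $\bar p(h(\phi))=(\phi_X(0),\phi_Y(0))=j(p_0(\phi))$, using that $j$ identifies $X\times Y$ with the subspace $X\times Y\subset(X\vee Y)^2$.

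Next I would produce the retraction from the collapsing maps $r_X\colon X\vee Y\to X$ and $r_Y\colon X\vee Y\to Y$: set $r(\psi)=(r_X\psi,\overline{r_Y\psi})$ for $\psi\in\bar p^{-1}(j(X\times Y))$. Since $\psi(1)\in Y$ we have $r_X(\psi(1))=v$, so $r_X\psi\in P_0(X)$; since $\psi(0)\in X$ we have $r_Y(\psi(0))=v$, so $\overline{r_Y\psi}\in P_0(Y)$; hence $r$ lands in $P_0(X\times Y)$ and is fiber-wise by construction. The identity $r\circ h=\id$ then comes from evaluating $r_X$ and $r_Y$ on $\phi_X\cdot\bar\phi_Y$ separately: under $r_X$ the $Y$-half collapses and the $X$-half is untouched, and symmetrically for $r_Y$. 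Together with continuity of $h$ and $r$, the relation $rh=\id$ shows that $h$ is an embedding onto its image and that $r$ retracts $\bar p^{-1}(j(X\times Y))$ onto that image.

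The only genuine subtlety — and the one step I would be careful about — is parametrization: after collapsing, the surviving half of a concatenation occupies only half the interval, so on the nose $rh(\phi_X,\phi_Y)=(\phi_X\cdot c_v,\phi_Y\cdot c_v)$ rather than $(\phi_X,\phi_Y)$. This is harmless and can be absorbed in either of two standard ways — passing to Moore paths, where the length-$0$ constant path is a strict unit for concatenation, or precomposing $h$ with the fiber-preserving reparametrization that compresses each path into the first half of $I$ — after which $rh=\id$ holds literally; none of the downstream applications, which use only the induced data on homotopy fibers, are affected.
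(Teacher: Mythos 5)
Your construction is the same as the paper's: $h(\phi_X,\phi_Y)=\phi_X\cdot\bar\phi_Y$ and $r(\psi)=(r_X\psi,\overline{r_Y\psi})$. What you have added is a correct observation that the paper elides: with the usual two-speed concatenation one actually gets
$$rh(\phi_X,\phi_Y)=(\phi_X\cdot c_v,\,\phi_Y\cdot c_v),$$
a reparametrization of $(\phi_X,\phi_Y)$ rather than $(\phi_X,\phi_Y)$ itself, so $r$ as written is a retraction onto $h(P_0(X\times Y))$ only up to fiber-wise homotopy, not on the nose. That point is real and you were right to flag it.

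Of your two repairs, the reparametrization one is the one that actually works (precompose $h$, or postcompose $r$, with the fiber-preserving homotopy that slides the constant tail off). The Moore-path fix does not do what you want as stated: $r_X(\phi_X\cdot\bar\phi_Y)$ is a Moore path of length $\ell_X+\ell_Y$ whose final $\ell_Y$ is constant at $v$, which is still not the length-$\ell_X$ path $\phi_X$; the strict-unit property only absorbs length-zero segments, and none appears here. But your larger point is the important one and it is correct: in Theorems~\ref{wedge}, \ref{TClb}, and \ref{dTClb} the equation $rh=\id$ is never invoked --- $h$ is used by itself to push a section of $\mathcal B_{n+1}(p_0^{X\times Y})$ into one of $\mathcal B_{n+1}(\bar p)$ over $X\times Y$, and $r$ is used by itself to pull back in the opposite direction --- so the failure of strict equality is harmless downstream.
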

\begin{thm}\label{wedge}
The equality
$$d\TC(X\vee Y)=\max\{d\TC (X),d\TC (Y),d\cat(X\times Y)\}$$
holds for CW complexes $X$ and $Y$
whenever $$\max\{\dim X,\dim Y\}<\max\{d\TC (X),d\TC (Y),d\cat(X\times Y)\}.$$
\end{thm}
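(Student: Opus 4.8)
The plan is to run the argument for the classical wedge formula (\theoref{wedgeCL}) with the distributional Ganea--Schwarz fibration $\mathcal B_{n+1}(\bar p^{W})\colon P(W)_{n+1}\to W\times W$, where $W=X\vee Y$ with wedge point $*$, playing the role of $\bar p_{n}^{W}$. By \theoref{chardTC}, proving $d\TC(W)\le n$ amounts to constructing a section of this fibration, and I would assemble one by pasting. Two features make pasting work here: the fibre over $(w_{1},w_{2})$ is $\mathcal B_{n+1}(P(w_{1},w_{2}))\simeq\mathcal B_{n+1}(\Omega W)$, which is $(n-1)$-connected by \theoref{connectivity}; and \propref{h} provides, over the off--diagonal piece $X\times Y$, a fibrewise embedding $h\colon P_{0}(X\times Y)\to P(W)$ and a fibrewise retraction $r$, so that applying $\mathcal B_{n+1}$ fibrewise identifies our fibration over $j(X\times Y)$ with a fibrewise retract of the distributional Ganea fibration $\mathcal B_{n+1}(p_{0}^{X\times Y})$ of \theoref{chardcat}.

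\emph{Lower bound} (no dimension hypothesis needed). As $X$ and $Y$ are retracts of $W$, pushing measures on paths forward along the retractions gives $d\TC(W)\ge d\TC(X)$ and $d\TC(W)\ge d\TC(Y)$ (the retract inequality, also a consequence of \propref{liftD}). For $d\TC(W)\ge d\cat(X\times Y)$, restrict a section of $\mathcal B_{n+1}(\bar p^{W})$ to $j(X\times Y)$ and postcompose with the fibrewise map $\mathcal B_{n+1}(r)$; the result is a section of $\mathcal B_{n+1}(p_{0}^{X\times Y})$, whence $d\cat(X\times Y)\le d\TC(W)$ by \theoref{chardcat}.

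\emph{Upper bound.} Put $n=\max\{d\TC(X),d\TC(Y),d\cat(X\times Y)\}$ and $d=\max\{\dim X,\dim Y\}$, so $d<n$. Write $W\times W=A\cup B$ with $A=(X\times X)\cup(Y\times Y)$ and $B=(X\times Y)\cup(Y\times X)$; these are subcomplexes, and $C:=A\cap B=(W\times\{*\})\cup(\{*\}\times W)$ has $\dim C=d$. First build a section over $A$: over $X\times X$, push a section of $\mathcal B_{n+1}(\bar p^{X})$ (which exists since $d\TC(X)\le n$) forward along the path-space inclusion $P(X)\hookrightarrow P(W)$, and symmetrically over $Y\times Y$; these two subcomplexes meet only at $(*,*)$, and since the fibre is path--connected ($n\ge1$) one can use the homotopy lifting property for NDR pairs recalled above to modify one section so the two agree at $(*,*)$, and then paste. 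Next build a section over $B$: over $X\times Y$, compose a section of $\mathcal B_{n+1}(p_{0}^{X\times Y})$ (which exists since $d\cat(X\times Y)\le n$) with $\mathcal B_{n+1}(h)$ to get a section over $j(X\times Y)$, and symmetrically over $Y\times X$ after swapping factors, pasting at $(*,*)$ as before. Finally, our fibration is Hurewicz with $(n-1)$-connected fibre and $\dim C=d\le n-1$, so \lemref{DS}(1) pastes the sections over $A$ and $B$ into a section over $W\times W$; \theoref{chardTC} then gives $d\TC(W)\le n$, and with the lower bound this is equality.

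\emph{Main obstacle.} The crux is the last pasting step, which needs the fibre $\mathcal B_{n+1}(\Omega W)$ to be $(\dim C)$-connected, i.e.\ $(n-1)\ge\max\{\dim X,\dim Y\}$ --- precisely the standing hypothesis, and the reason (as in \theoref{wedgeCL}) the formula carries this restriction. The required connectivity of $\mathcal B_{n+1}(\Omega W)$ is exactly \theoref{connectivity}, so no new connectivity computation is needed; what remains is the bookkeeping of transporting sections along the fibrewise maps $P(X)\hookrightarrow P(W)$, $h$, $r$, and the minor matter of the corner point $(*,*)$.
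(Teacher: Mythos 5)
Your proposal is correct and reproduces the paper's argument in all essentials: the same lower bound via the retractions $r_X,r_Y$ and via $\mathcal B_{n+1}(r)$ composed with a restricted section (the paper writes this as $H(x,y)=\sum\lambda_\phi(r_X\phi,r_Y\bar\phi)$, which is the same thing unravelled), and the same upper bound by pasting sections of $\mathcal B_{n+1}(\bar p^{X\vee Y})$ built from $\mathcal B_{n+1}(\bar p^X)$, $\mathcal B_{n+1}(\bar p^Y)$ and $h_{n+1}\circ\sigma_{X\times Y}$, glued over a $d$-dimensional intersection using the $(n-1)$-connectivity of the fibre from Theorem~\ref{connectivity} together with Lemma~\ref{DS}. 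The only difference is bookkeeping: you group the four blocks of $(X\vee Y)^2$ into $A=(X\times X)\cup(Y\times Y)$ and $B=(X\times Y)\cup(Y\times X)$ and paste once over $A\cap B=(W\times\{*\})\cup(\{*\}\times W)$, while the paper first normalizes $s_X,s_Y$ to agree at $(*,*)$ and then deforms $s'_{X\times Y}$ and $s'_{Y\times X}$ one at a time to agree with $s_X\cup s_Y$ on $X\vee Y$; both are applications of \lemref{DS}(1) under the same dimension/connectivity hypothesis.
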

\begin{proof}
 Let $v\in X\vee Y$ be the wedge point and let $r_X:X\vee Y\to X$ and $r_Y:X\vee Y\to Y$ be the retractions collapsing $Y$ to $v$ and $X$ to $v$, respectively. The inequality $$d\TC(X\vee Y)\ge\max\{d\TC (X),d\TC (Y)\}$$ holds in view of retractions $r_X$ and $r_Y$.
Let $d\TC(X\vee Y)=n$. We show that $d\cat(X\times Y)\le n$.
Let $$s:(X\vee Y)\times(X\vee Y)\to\mathcal B_{n+1}(P(X\vee Y))$$ be a $(n+1)$-distributed
navigation algorithm, $s(z,z')=\sum\lambda_\phi\phi$
where $\phi:I\to X\vee Y$ is a path from $z$ to $z'$. We define an $(n+1)$-contraction $$H:X\times Y\to \mathcal B_{n+1}(P_0(X\times Y))$$ of $X\times Y$ to the point $(v,v)$ by the formula
$$
H(x,y)=\sum_{\phi\in\supp s(x,y)}\lambda_\phi(r_X\phi,r_Y\bar\phi)
$$
where $\bar\phi$ is the reverse path from $y$ to $x$. Clearly, $r_X\phi:I\to X$ is a path from $x$ to $v$ and $r_Y\bar\phi:I\to Y$ is a path from $y$ to $v$. Hence, $(r_X\phi,r_Y\bar\phi):I\to X\times Y$ is a path from $(x,y)$ to $(v,v)$.

Next, we prove the inequality
$$d\TC(X\vee Y)\le\max\{d\TC (X),d\TC (Y),d\cat(X\times Y)\}.$$
Let $n=\max\{d\TC (X),d\TC (Y),d\cat(X\times Y)\}$. We construct a section $s$ of the fibration $\mathcal B_{n+1}(\bar p)$ with $$\bar p:P(X\vee Y)\to(X\vee Y)\times(X\vee Y)=(X\times X)\cup (Y\times Y)\cup (X\times Y)\cup (Y\times X).$$
Let $c_v:I\to X\vee Y$ denote the constant loop $c_v(t)=v$. We define $s(v,v)$ to be the Dirac measure $\delta_{c_v}$ identified with $c_v$ in our notations. 
Note that the fibration $$\mathcal B_{n+1}(\bar p^X) :P(X)_{n+1}\to X \times X$$ defined for
$
\bar p^{X}:P(X)\to X\times X
$
is naturally embedded in the fibration $\mathcal B_{n+1}(\bar p)$. Since $d\TC(X)\le n$, by Proposition~\ref{chardTC}, there is a section $$s_X:X\times X\to P(X)_{n+1}$$ of  $\mathcal B_{n+1}(\bar p^X)$,
which is also a section of $\mathcal B_{n+1}(\bar p)$. We may assume that $s_X(v,v)=s(v,v)$. Similarly, there is a section $$s_Y:Y\times Y\to P(X\vee Y)_{n+1}$$ of $\mathcal B_{n+1}(\bar p)$
with $s_Y(v,v)=s(v,v)$.

The map $h$ from Proposition~\ref{h} defines a fiberwise map $$h_{n+1}:P_0(X\times Y)_{n+1}\to P(X\vee Y)_{n+1}.$$
Since $d\cat (X\times Y)\le n$, by Proposition~\ref{chardcat}, there is a section $\sigma_{X\times Y}$ of $\mathcal B_{n+1}(p_0^{X\times Y})$.
Then $$s'_{X\times Y}=h_{n+1} \hspace{1mm} \sigma_{X\times Y} : X \times Y \to P(X \vee Y)_{n+1}$$ is a section of $\mathcal B_{n+1}(\bar p)$ over $X\times Y$. On the set $X\vee Y=X\times v\cup v\times Y$, this section $s'$ could disagree with $s_Y\cup s_X$. We will correct it as follows.
First we note that the loop space on a CW complex is homotopy equivalent to a CW complex.
Then by Theorem~\ref{connectivity} we obtain that the  space $F=\mathcal B_{n+1}(\Omega(X\vee Y)),$  which is the fiber of the fibration $\mathcal B_{n+1}(\bar p)$,
is $(n-1)$-connected. Therefore, since $\dim(X\vee Y)<n$, there is a fiberwise deformation of $s'_{X\times Y}$ to a section $s_{X\times Y}$ that agrees with $s_Y\cup s_X$ (see Lemma~\ref{DS}).
Similarly, we construct a section $s_{Y\times X}$ that agrees with $s_Y\cup s_X$. Then $$s=s_Y\cup s_X\cup s_{X\times Y}\cup s_{Y\times X}$$ is a continuous section of $\mathcal B_{n+1}(\bar p)$. By Proposition~\ref{chardTC}, $$d\TC(X\vee Y)\le n.$$
\end{proof}

\begin{theorem}\label{modified}
The equality
$$d\TC(X\vee Y)=\max\{d\TC (X),d\TC (Y),d\cat(X\times Y)\}$$
holds for CW complexes $X$ and $Y$
whenever $$\max\{\dim X,\dim Y\}\le\max\{d\TC (X),d\TC (Y),d\cat(X\times Y)\}=n$$
and $H^n(X;\mathcal F)=0=H^n(Y;\mathcal G)$ for all local coefficients.
\end{theorem}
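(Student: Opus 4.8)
The plan is to reread the proof of Theorem~\ref{wedge} almost verbatim, replacing the single use of the strict inequality $\max\{\dim X,\dim Y\}<n$ by an appeal to part~(2) of Lemma~\ref{DS} together with the new cohomological hypothesis. The lower bound $d\TC(X\vee Y)\ge\max\{d\TC(X),d\TC(Y),d\cat(X\times Y)\}$ is proved exactly as there, using the retractions $r_X\colon X\vee Y\to X$ and $r_Y\colon X\vee Y\to Y$ and the observation that pushing an $(n+1)$-distributed navigation algorithm on $(X\vee Y)^2$ through $r_X$ and $r_Y$ yields an $(n+1)$-distributed contraction of $X\times Y$ to $(v,v)$; no dimension hypothesis is used for this half. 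For the upper bound set $n=\max\{d\TC(X),d\TC(Y),d\cat(X\times Y)\}$; we may assume $n\ge1$, since $n=0$ forces $X$ and $Y$ to be contractible $0$-dimensional complexes, hence points, and the statement is then trivial.

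Now decompose $(X\vee Y)^2=(X\times X)\cup(Y\times Y)\cup(X\times Y)\cup(Y\times X)$. Here $X\times X$ and $Y\times Y$ meet only at $(v,v)$; the piece $X\times Y$ meets $(X\times X)\cup(Y\times Y)$ along $A:=(X\times\{v\})\cup(\{v\}\times Y)\cong X\vee Y$; and $Y\times X$ meets the union of the other three pieces along $A':=(\{v\}\times X)\cup(Y\times\{v\})\cong X\vee Y$. The fiber of $\mathcal B_{n+1}(\bar p)\colon P(X\vee Y)_{n+1}\to(X\vee Y)^2$ is $F=\mathcal B_{n+1}(\Omega(X\vee Y))$, which is $(n-1)$-connected (and in particular connected) by Theorem~\ref{connectivity}. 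Using Theorem~\ref{chardTC} for $X$ and for $Y$ we get sections of $\mathcal B_{n+1}(\bar p)$ over $X\times X$ and over $Y\times Y$; since $F$ is connected we may homotope them to agree at $(v,v)$, obtaining a section over $(X\times X)\cup(Y\times Y)$. Using $d\cat(X\times Y)\le n$ with Theorem~\ref{chardcat} and the fiberwise map $h$ of Proposition~\ref{h}, we build sections of $\mathcal B_{n+1}(\bar p)$ over $X\times Y$ and, symmetrically, over $Y\times X$.

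The gluing rests on the claim that $H^n(X\vee Y;\mathcal F)=0$ for every local coefficient system $\mathcal F$. For $n\ge2$ this follows from the Mayer--Vietoris sequence of $X\vee Y=X\cup_v Y$: the neighbouring terms $H^{n-1}(\{v\};\mathcal F)$ and $H^n(X;\mathcal F)\oplus H^n(Y;\mathcal F)$ both vanish, the first because $n-1>0$ and the second by the hypothesis on $X$ and $Y$. For $n=1$ the hypothesis forces the $1$-complexes $X,Y$ to be forests, hence contractible, and the claim is trivial. Granting this, apply Lemma~\ref{DS}(2) to the decomposition $(X\times X)\cup(Y\times Y)\cup(X\times Y)$ written as $\big((X\times X)\cup(Y\times Y)\big)\cup(X\times Y)$: the intersection is $A\cong X\vee Y$ with $\dim A=\max\{\dim X,\dim Y\}\le n$, $F$ is $(n-1)$-connected, and $H^n(A;\mathcal F)=0$ for all local coefficients, so the two sections glue to a section over $(X\times X)\cup(Y\times Y)\cup(X\times Y)$. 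A second application of Lemma~\ref{DS}(2) with $Y\times X$ adjoined (intersection $A'\cong X\vee Y$, same hypotheses) produces a section of $\mathcal B_{n+1}(\bar p)$ over all of $(X\vee Y)^2$, whence $d\TC(X\vee Y)\le n$ by Theorem~\ref{chardTC}. The only real obstacle is the vanishing $H^n(X\vee Y;\mathcal F)=0$ with arbitrary twisted coefficients, which is exactly what the added hypothesis supplies; once it is in place the connectivity bound $(n-1)$ of Theorem~\ref{connectivity} meets the hypothesis of Lemma~\ref{DS}(2) precisely, and the rest of the argument is identical to that of Theorem~\ref{wedge}.
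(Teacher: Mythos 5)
Your proposal is correct and is in substance the same argument as the paper's: the paper's proof also reduces to showing that the gluing obstruction (equivalently, the obstruction to the fiberwise deformation of $s'_{X\times Y}$ on the $n$-skeleton) vanishes because the $n$-th twisted cohomology of $X$ and $Y$ vanishes, which is exactly the content of Lemma~\ref{DS}(2) together with your Mayer--Vietoris observation that $H^n(X\vee Y;\mathcal F)=0$. Your write-up is somewhat more explicit than the paper's two-sentence remark, but the route is identical.
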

\begin{proof} If $n> \max\{\dim X,\dim Y\}$, the result follows from Theorem~\ref{wedge}. We may assume that $n=\max\{\dim X,\dim Y\}$.
To make the argument of Theorem~\ref{wedge} work here
we need to check the inequality $\TC(X\vee Y)\le n$. By Theorem~\ref{connectivity} the fiber $F$ is  $(n-1)$-connected. The fiberwise homotopy as above can be arranged on the $(n-1)$-skeleton
$X^{(n-1)}\vee Y^{(n-1)}$. The primary obstruction to construct it on the $n$-skeleton is zero, since $n$-cohomology groups of $X$ and $Y$ are trivial.
 Then the fibration $\mathcal B_{n+1}(\bar p)$ admits a section over $(X\vee Y)^2$.
\end{proof}

\subsection{$\TC$ of connected sum}
Let $X$ and $Y$ be closed $n$-manifolds with the orientation sheaves $\mathcal O_X$ and $\mathcal O_Y$. 
Suppose that $X$ and $Y$ have a common $n$-ball $B$. Let $\mathcal O'$ be the sheaf on $X\cup_BY$ obtained by
identification of $\mathcal O_X$ and $\mathcal O_Y$ along $\mathcal O_B$.
We denote by $\mathcal O$ the sheaf on the wedge $X\vee Y$ obtained 
by pulling back $\mathcal O'$ by means of a homotopy inverse  $X\vee Y\to X\cup_BY$ to the collapsing map $X\cup_BY\to (X\cup_BY)/B\cong X\vee Y$.
Then the restriction of the sheaf $\mathcal O\hat\otimes\mathcal O$ to any of the following manifolds $M\subset(X\vee Y)^2$ is the orientation sheaf, $M=X^2, Y^2, X\times Y,Y\times X$. The fundamental class $[X\# Y]$ defines the fundamental classes $[M]$ for all the above $M$.

\begin{prop}\label{cap}
Let $X$ and $Y$ be closed $n$-manifolds and $p:E\to(X\vee Y)^2$  be a fibration with $(2n-2)$-connected fiber such that $p$ admits sections over $X^2$ and $Y^2$
and does not admit a section over $X\times Y$. Suppose for the factor flipping map $\sigma:X\times Y\to V\times X$ satisfies $\sigma_*([X\times Y])=[X\times Y]$.
Then $$([X^2]+[Y^2]+[X\times Y]+[Y\times X])\cap\kappa\ne 0$$
for the primary obstruction $\kappa\in H^{2n}((X\vee Y)^2;\mathcal F)$ to a section of $p$ provided $2\kappa\ne 0$.
\end{prop}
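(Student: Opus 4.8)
The plan is to reduce the assertion, via Poincar\'e duality, to the nonvanishing of an obstruction class, exploiting that over a $2n$-dimensional base the primary obstruction is the complete obstruction. First I would record the obstruction-theoretic input: since $\dim(X\vee Y)^2=2n$ and the fiber $F$ of $p$ is $(2n-2)$-connected, $p$ has a section over the $(2n-1)$-skeleton, and $\kappa\in H^{2n}((X\vee Y)^2;\mathcal F)$ with $\mathcal F=\pi_{2n-1}(F)$ is the only obstruction to extending it; hence for every subcomplex $A$, $p$ has a section over $A$ if and only if $\kappa|_A=0$. From the hypotheses this gives $\kappa|_{X^2}=\kappa|_{Y^2}=0$ and $\kappa|_{X\times Y}\neq 0$.

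Next, by naturality of the cap product $\iota_{M,*}[M]\cap\kappa=\iota_{M,*}([M]\cap\iota_M^*\kappa)$ for each of the four quadrants $M\hookrightarrow(X\vee Y)^2$, so the $X^2$- and $Y^2$-summands vanish and
$$([X^2]+[Y^2]+[X\times Y]+[Y\times X])\cap\kappa=\iota_{X\times Y,*}\bigl([X\times Y]\cap\kappa|_{X\times Y}\bigr)+\iota_{Y\times X,*}\bigl([Y\times X]\cap\kappa|_{Y\times X}\bigr).$$
Since $(\mathcal O\hat\otimes\mathcal O)|_{X\times Y}$ is the orientation sheaf of $X\times Y$ (and likewise over $Y\times X$), capping with the fundamental class is the Poincar\'e duality isomorphism $H^{2n}(X\times Y;\mathcal F)\xrightarrow{\cong}H_0(X\times Y;\mathcal F\otimes\mathcal O_{X\times Y})$, so $[X\times Y]\cap\kappa|_{X\times Y}\neq 0$. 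Everything then comes down to showing that this class is neither annihilated by $\iota_{X\times Y,*}$ nor cancelled by the $Y\times X$-term inside $H_0((X\vee Y)^2;\mathcal O\hat\otimes\mathcal O\otimes\mathcal F)$.

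For this I would pass to the closed manifold $(X\#Y)^2$ through the degree-one collapse $c\colon X\#Y\to X\vee Y$ of the separating sphere --- which is exactly what the sheaf $\mathcal O$ was set up for: $(c\times c)^*(\mathcal O\hat\otimes\mathcal O)=\mathcal O_{(X\#Y)^2}$, and $c_*[X\#Y]=[X]+[Y]$ yields $(c\times c)_*[(X\#Y)^2]=[X^2]+[Y^2]+[X\times Y]+[Y\times X]$. Hence the class above equals $(c\times c)_*\bigl([(X\#Y)^2]\cap(c\times c)^*\kappa\bigr)$; Poincar\'e duality on the closed manifold $(X\#Y)^2$ turns $[(X\#Y)^2]\cap(c\times c)^*\kappa$ into the dual of $(c\times c)^*\kappa$, and for $n\ge 3$ --- where $c\times c$ is a $\pi_1$-isomorphism, hence $(c\times c)_*$ is an isomorphism on $H_0$ with arbitrary local coefficients; the cases $n\le 2$ I would handle separately, using $\Z/2$-coefficients where $X$ and $Y$ are $\Z/2$-oriented --- the statement becomes $(c\times c)^*\kappa\neq 0$, i.e.\ that $(c\times c)^*p$ admits no section over $(X\#Y)^2$. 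I expect this to be the real obstacle. The mechanism: restrict to the chart $X_0\times Y_0\subset(X\#Y)^2$ ($X_0=X$ minus an open ball, and likewise $Y_0$), where $c\times c$ is a degree-one map of pairs $(X_0\times Y_0,\partial)\to(X\times Y,X\vee Y)$; Lefschetz duality together with $\pi_1(X_0\times Y_0)\cong\pi_1(X\times Y)$ (for $n\ge 3$) makes it an isomorphism on top relative cohomology $H^{2n}(X\times Y,X\vee Y;\mathcal F)\xrightarrow{\cong}H^{2n}(X_0\times Y_0,\partial;(c\times c)^*\mathcal F)$, and since $\kappa|_{X\times Y}\neq 0$ represents a nonzero class on the left (note $H^{2n-1}(X\vee Y;\mathcal F)=0$ because $\dim(X\vee Y)=n<2n-1$), its image is nonzero; a Mayer--Vietoris / obstruction-gluing argument for $(X\#Y)^2=(X_0\times Y_0)\cup X_0^2\cup(Y_0\times X_0)\cup Y_0^2$ (along the lines of \lemref{DS}), using $\kappa|_{X^2}=\kappa|_{Y^2}=0$, then forces $(c\times c)^*\kappa\neq 0$. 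An alternative staying inside $(X\vee Y)^2$ is a direct Mayer--Vietoris computation of $H_0((X\vee Y)^2;\mathcal O\hat\otimes\mathcal O\otimes\mathcal F)$ from the cover $X^2\cup Y^2$, $X\times Y$, $Y\times X$ (whose pairwise intersections lie in the $n$-skeleton); either way the delicate point is the same: excluding cancellation of the two product-quadrant contributions.
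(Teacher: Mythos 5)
Your setup is correct and matches the paper up to the very last step: you identify that $\kappa|_{X^2}=\kappa|_{Y^2}=0$ and $\kappa|_{X\times Y}\neq 0$, reduce by naturality of cap products to the two product-quadrant contributions, and apply Poincar\'e duality on $X\times Y$ to see that $[X\times Y]\cap\kappa|_{X\times Y}\neq 0$. You also correctly name the one remaining issue --- that these nonzero classes could die or cancel in $H_0\bigl((X\vee Y)^2;\mathcal O\hat\otimes\mathcal O\otimes\mathcal F\bigr)$ --- and you even propose the same kind of Mayer--Vietoris cover the paper uses. But you explicitly flag this as ``the real obstacle'' and ``the delicate point'' without resolving it, so what you have is a reduction, not a proof.

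The paper closes exactly this gap with a short exact-sequence argument. Writing $P=X^2\cup Y^2$ and $Q=(X\times Y)\vee(Y\times X)$, with $P\cap Q=X\vee Y\vee X\vee Y$, the Mayer--Vietoris sequence
$$H_0(P\cap Q)\stackrel{\phi}{\to}H_0(P)\oplus H_0(Q)\stackrel{\psi}{\to}H_0\bigl((X\vee Y)^2\bigr)\to 0$$
(all with coefficients in $\mathcal O\hat\otimes\mathcal O\otimes\mathcal F$) maps the element $(0,b)$, where $b=[X\times Y]\cap\kappa_{X\times Y}\oplus[Y\times X]\cap\kappa_{Y\times X}\neq 0$, to the class you want. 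Since $\phi(x)=(-i^P_*x,\,i^Q_*x)$ and $i^P_*\colon H_0(P\cap Q)\to H_0(P)$ is injective (the four arms of $P\cap Q$ carry $\pi_1$-generators whose images already generate $\pi_1(P)$-coinvariants, so the coinvariant quotients coincide), a preimage of $(0,b)$ under $\phi$ would force $b=0$. Hence $(0,b)\notin\operatorname{im}\phi$ and exactness gives $\psi(0,b)\neq 0$. That is the missing non-cancellation argument.

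One further caution about your preferred route via $(X\#Y)^2$: it is formally equivalent (showing $(c\times c)^*\kappa\neq 0$ is the Poincar\'e-dual restatement), but the Lefschetz-duality step on $X_0\times Y_0$ does not deliver it directly. Since $X_0$ deformation retracts to an $(n-1)$-complex, $X_0\times Y_0$ is homotopy equivalent to a $(2n-2)$-complex and $H^{2n}(X_0\times Y_0;-)=0$; the nonzero class you produce lives only in $H^{2n}(X_0\times Y_0,\partial;-)$, and the passage from there to $H^{2n}\bigl((X\#Y)^2;-\bigr)$ is precisely the gluing/Mayer--Vietoris step you did not carry out. So this route is not a shortcut past the delicate point; it lands on the same one.
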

\begin{proof}
We note that $$ (X\vee Y)^2=(X^2\vee Y^2)\cup ((X\times Y)\vee (Y\times X)).$$
Let $P=X^2\cup Y^2$ and $Q=(X\times Y)\vee (Y\times X)$.  Note that $P\cap Q=X\vee Y\vee X\vee Y$.
Consider the Mayer-Vietoris exact sequence for homology with coefficients in $\mathcal F\otimes \mathcal O\hat\otimes\mathcal O$
$$
H_0(P\cap Q)\stackrel{\phi}\to H_0(P)\oplus H_0(Q)\stackrel{\psi}\to H_0((X\vee Y)^2) \to 0.
$$
We recall that $\phi(x)=(-i_*^P(x),i_*^Q(x))$ and $\psi(a,b)=j_*^P(a)+j_*^Q(b)$ where $i^P:P\cap Q\to P$, $i^Q:P\cap Q\to Q$, $j^P:P\to(X\vee Y)^2$, and
$j^Q:Q\to(X\vee Y)^2$ are the inclusions. We use the notations $\kappa_P=(j^P)^*(\kappa)$ and $\kappa_Q=(j^Q)^*(\kappa)$. 
Then $\kappa_P$ and $\kappa_Q$ are the primary obstruction to sections over $P$ and $Q$ respectively.
 Note that $\psi=g_*$ where $g:P\coprod Q\to (X\vee Y)^2$ is the quotient map. Then  $$\psi(([X^2]+[Y^2])\cap \kappa_P, ([X\times Y]+[Y\times X])\cap\kappa_Q)=$$
$$
g_*(([X^2]+[Y^2]+[X\times Y]+[Y\times X])\cap g^*\kappa)=$$
 $$([X^2]+[Y^2]+[X\times Y]+[Y\times X])\cap\kappa=a.$$

By the hypotheses and  Proposition~\ref{liftCL} there is a section to $p$ over $P$. This implies that $\kappa_P=0$. Thus, $$\psi(0, ([X\times Y]+[Y\times X])\cap\kappa_Q)=a.$$
Since $Q=(X\times Y)\vee (Y\times X)$, we obtain $$([X\times Y]+[Y\times X])\cap\kappa_Q=\psi([X\times Y]\cap\kappa_{Q}\oplus [Y\times X]\cap\kappa_{Q})=$$
$$2[X\times Y]\cap\kappa_Q=[X\times]\cap 2\kappa_Q.$$
Since  $2\kappa_{Q}\ne 0$, by Poincare duality $[X\times]\cap 2\kappa_Q\ne 0$. 
Therefore, $$b=([X\times Y]+[Y\times X])\cap\kappa_Q\ne 0.$$ By definition of $\phi$ the pair $(0,b)$ is not in the image of $\phi$. Hence by exactness $a=\psi(0,b)\ne 0$.
\end{proof}

\begin{thm}\label{TClb}
Let $X$ and $Y$ be closed aspherical $n$-manifolds with $\cat(X\times Y)=2n$, $\TC(X),\TC(Y)<2n$, and 
$\sigma_*([X\times Y])=[X\times Y]$ for the flipping map. Then $$\TC(X\#Y)=2n.$$
\end{thm}
\begin{proof}
Let $q:X\#Y\to X\cup_BY$ be the inclusion. Then the orientation sheaf on $X\#Y$ is the restriction of the sheaf $\mathcal O$ defined above. Note that $q$ takes the fundamental class
$[X\#Y]$ to the sum $[X]+[Y]$ of the fundamental classes. Then $q\times q$ takes the fundamental class $[(X\#Y)^2]$ to $[X^2]+[Y^2]+[X\times Y]+[Y\times X]$.

By Theorem~\ref{wedgeCL} $\TC(X\vee Y)=2n$.
We show that $q\times q$ does not admit a lift with respect to $\bar p_{2n-1}:\Delta_{2n-1}(X\vee Y)\to (X\vee Y)^2$. 
We note that $\bar p_{2n-1}$ has $(2n-2)$-connected fiber. The condition $\TC(X),\TC(Y)<2n$ implies that $\bar p_{2n-1}$ admits sections over $X^2$ and $Y^2$.
By Proposition~\ref{h} there is a fiber-wise retraction over $X\times Y$ of  $\Delta_{2n-1}(X\vee Y)$ onto $G_{2n-1}(X\times Y)$.
Therefore, the condition $\cat(X\times Y)=2n$ implies that $\bar p_{2n-1}$ does not admit a section over $X\times Y$ Moreover, the primary obstruction is $\beta^{2n}$ where 
$\beta$ is the Berstein-Schwarz class for the group $\pi_1(X\times Y)$. Since $X\times Y$ is an aspherical manifold, the universality of $\beta$ implies that $\beta^{2n}$ has infinite order.
Thus, all conditions of Proposition~\ref{cap} are satisfied.

Let $\kappa\in H^{2n}((X\vee Y)^2;\mathcal F)$ be the primary obstruction to a section of $\bar p_{2n}$.
Then $(q\times q)^*(\kappa)$ is the primary obstruction to a lift. Since $q\times q$ induces an epimorphism of the fundamental groups, the homomorphism $$(q\times q)_*:H_0((X\#Y)^2;(q\times q)^*\mathcal A)\to
H_0((X\vee Y)^2;\mathcal A)$$ is an isomorphism for any coefficient system on $(X\vee Y)^2$. Then for coefficients in $\mathcal O\otimes\mathcal F$ we have $$(q\times q)_*([(X\#Y)^2]\cap(q\times q)^*(\kappa))=([X^2]+[Y^2]+[X\times Y]+[Y\times X])\cap\kappa.$$
By Proposition~\ref{cap} the Poincare dual to  $(q\times q)^*(\kappa)$ is nonzero. Thus, $(q\times q)^*(\kappa)\ne 0$ and, hence, there is no lift of $q\times q$.
By  Proposition~\ref{liftCL} $\TC(X\#Y)\ge \TC(q)\ge 2n$.
\end{proof}

\begin{thm}\label{dTClb}
If closed aspherical $n$-manifolds $X$ and $Y$ satisfy the conditions $d\cat(X\times Y)=2n$,  $d\TC(X),d\TC(Y)<2n$,  and 
$\sigma_*([X\times Y])=[X\times Y]$ for the flipping map. Then $$d\TC(X\#Y)=2n.$$
\end{thm}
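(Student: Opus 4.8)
The plan is to imitate the proof of Theorem~\ref{TClb} nearly verbatim, replacing the Schwarz--Ganea fibration $\bar p_{2n-1}$ by the distributional fibration $\mathcal B_{2n}(\bar p^{X\vee Y})\colon P(X\vee Y)_{2n}\to(X\vee Y)^2$ and the Ganea fibration $p_{2n-1}^{X\times Y}$ by $\mathcal B_{2n}(p_0^{X\times Y})\colon P_0(X\times Y)_{2n}\to X\times Y$. What legitimizes the substitution is Theorem~\ref{connectivity}: the loop space of a CW complex is homotopy equivalent to a CW complex, so the fiber $\mathcal B_{2n}(\Omega(X\vee Y))$ of $\mathcal B_{2n}(\bar p^{X\vee Y})$ is $(2n-2)$-connected, which is precisely the connectivity hypothesis in Proposition~\ref{cap}. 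The upper bound $d\TC(X\#Y)\le 2n$ follows from obstruction theory, since by Theorem~\ref{connectivity} the fiber $\mathcal B_{2n+1}(\Omega(X\#Y))$ of $\mathcal B_{2n+1}(\bar p^{X\#Y})$ is $(2n-1)$-connected while $\dim((X\#Y)^2)=2n$, so that fibration admits a section and Theorem~\ref{chardTC} gives $d\TC(X\#Y)\le 2n$.

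For the lower bound, let $q\colon X\#Y\to X\vee Y$ be the natural map, namely the inclusion $X\#Y\hookrightarrow X\cup_B Y$ followed by the collapsing map $X\cup_B Y\to(X\cup_B Y)/B\cong X\vee Y$. Since $d\TC(q)\le d\TC(X\#Y)$, it is enough to prove $d\TC(q)\ge 2n$, which by Proposition~\ref{liftD}(b) means that $q\times q$ admits no lift against $\mathcal B_{2n}(\bar p^{X\vee Y})$. First I would verify the three hypotheses of Proposition~\ref{cap} for $p=\mathcal B_{2n}(\bar p^{X\vee Y})$. The hypothesis $d\TC(X),d\TC(Y)<2n$ gives sections of $\mathcal B_{2n}(\bar p^{X})$ and $\mathcal B_{2n}(\bar p^{Y})$ by Theorem~\ref{chardTC}, and these push forward along the natural fiberwise inclusions $P(X)_{2n}\hookrightarrow P(X\vee Y)_{2n}$ and $P(Y)_{2n}\hookrightarrow P(X\vee Y)_{2n}$ to sections of $p$ over $X^2$ and $Y^2$. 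For the non-existence of a section of $p$ over $X\times Y$, Proposition~\ref{h} supplies a fiberwise retraction of $P(X\vee Y)$ over $X\times Y$ onto $P_0(X\times Y)$; applying the functor $\mathcal B_{2n}$ fiberwise produces a retraction $r_{2n}$, so a section of $p$ over $X\times Y$ composed with $r_{2n}$ would be a section of $\mathcal B_{2n}(p_0^{X\times Y})$, contradicting $d\cat(X\times Y)=2n$ in view of Theorem~\ref{chardcat}.

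Proposition~\ref{cap} then yields
$$([X^2]+[Y^2]+[X\times Y]+[Y\times X])\cap\kappa\neq 0$$
for the primary obstruction $\kappa\in H^{2n}((X\vee Y)^2;\mathcal F)$ to a section of $p$, and from here the argument is identical to the last paragraph of the proof of Theorem~\ref{TClb}. The primary obstruction to lifting $q\times q$ is $(q\times q)^*\kappa$. Because $q$, hence $q\times q$, is an epimorphism on fundamental groups, the map $(q\times q)_*\colon H_0((X\#Y)^2;(q\times q)^*\mathcal A)\to H_0((X\vee Y)^2;\mathcal A)$ is an isomorphism for every local system $\mathcal A$, while $(q\times q)_*[(X\#Y)^2]=[X^2]+[Y^2]+[X\times Y]+[Y\times X]$. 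The projection formula then gives
$$(q\times q)_*\bigl([(X\#Y)^2]\cap(q\times q)^*\kappa\bigr)=([X^2]+[Y^2]+[X\times Y]+[Y\times X])\cap\kappa\neq 0,$$
so the Poincare dual of $(q\times q)^*\kappa$ in the closed $2n$-manifold $(X\#Y)^2$ is nonzero; hence $(q\times q)^*\kappa\neq 0$ and $q\times q$ has no lift. Therefore $d\TC(X\#Y)\ge d\TC(q)\ge 2n$, which together with the upper bound proves $d\TC(X\#Y)=2n$.

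The argument is structurally a copy of the proof of Theorem~\ref{TClb}, so the crux is not the reasoning but the inputs that make it valid. The decisive new ingredient is Theorem~\ref{connectivity}: one needs $\mathcal B_m(Z)$ to be $(m-2)$-connected for the non-simply-connected space $Z=\Omega(X\vee Y)$, which is exactly the range where the classical connectivity theorems of Nakaoka~\cite{Na} and of~\cite{KK} (valid only for simply connected $Z$) do not apply. The one remaining technical point I expect to need care is checking that the fiberwise retraction of Proposition~\ref{h} and the fiberwise inclusions $P(X)\hookrightarrow P(X\vee Y)$ remain well behaved after applying the functor $\mathcal B_{2n}$, so that sections transfer as asserted; this is routine but should be spelled out.
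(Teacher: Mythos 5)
Your argument is correct and it is, in structure and in all essential steps, the same as the paper's proof of Theorem~\ref{dTClb}: replace the Schwarz--Ganea fibration by $\mathcal B_{2n}(\bar p)$, use Theorem~\ref{connectivity} to get the required $(2n-2)$-connectivity of the fiber, feed this into Proposition~\ref{cap}, and finish with the Poincar\'e-duality/obstruction argument of Theorem~\ref{TClb}. (You even correct a small slip in the paper, which says the fiber is ``$(n-2)$-connected'' where $(2n-2)$-connected is meant, and you make explicit the easy upper bound $d\TC(X\#Y)\le 2n$ that the paper leaves tacit.)
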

\begin{proof}
Similarly to the proof of Theorem~\ref{TClb} we show that $q\times q$ does not admit a lift with respect to $\mathcal B_{2n}(\bar p):P(X\vee Y)_{2n} \to (X\vee Y)^2$. 
By Theorem~\ref{connectivity} the fiber of $\mathcal B_{2n}(\bar p)$ is $(n-2)$-connected. In view of Proposition~\ref{h} the condition $d\cat(X\times Y)=2n$ implies that there is no section over $X\times Y$
and the condition  $d\TC(X),d\TC(Y)<2n$ implies that sections exists over $X^2$ and $Y^2$.
Let $\kappa$ be the primary obstruction to a section of $\mathcal B_{2n}(\bar p)$.
Then $(q\times q)^*(\kappa)$ is the primary obstruction to a lift. Since $q\times q$ induces an epimorphism of the fundamental groups, $(q\times q)_*:H_0((X\#Y)^2;\mathcal A)\to
H_0((X\vee Y)^2;\mathcal A)$ is an isomorphism for any coefficient system. Note that for coefficients in $\mathcal O\otimes\mathcal F$
by Proposition~\ref{cap}$$(q\times q)_*([(X\#Y)^2]\cap(q\times q)^*(\kappa))=([X^2]+[Y^2]+[X\times Y]+[Y\times X])\cap\kappa $$ is nonzero.
Thus, the primary obstruction to a lift  $(q\times q)^*(\kappa)$ is nontrivial. Hence, there is no lift of $q\times q$.
By Proposition~\ref{liftD}. $d\TC(X\#Y)\ge d\TC(q)\ge 2n$.
\end{proof}

\section{$d\TC$ of discrete groups}
We recall that the Lusternik-Schnirelmann category and the Topological Complexity of a group $\Gamma$ are defined as $\cat(\Gamma)=\cat(B\Gamma)$ and $\TC(\Gamma)=\TC(B\Gamma)$. We define similarly $d\cat\Gamma=d\cat(B\Gamma)$ and $d\TC(\Gamma)=d\TC(B\Gamma)$. These definitions make sense, since all classifying spaces $B\Gamma$ of a group $\Gamma$
are homotopy equivalent and $d\cat(X)$ and $d\TC(X)$ are homotopy invariants.

We note that in both fibrations $p_0:P_0(X)\to X$ and $\bar p:P(X)\to X\times X$ the fibers are homotopy equivalent to the loop space $\Omega X$.
If $X$ is homotopy equivalent to $K(\Gamma,1)$ then the canonical quotient map $\Omega X\to\Gamma$ that collapses path components to points is a homotopy equivalence.
Thus, in the case of aspherical $X$ the fibration $p_0$ is fiber-wise homotopy equivalent to the universal covering map $u:\widetilde X\to X$ and $\bar p$ is equivalent to a coving map
$v:D(X)\to X\times X$ that corresponds to the diagonal subgroup of $\pi_1(X\times X)=\pi_1(X)\times\pi_1(X)$.
Then Propositions~\ref{chardcat},~\ref{chardTC} for groups turn into the following
\begin{prop}\label{chardcatg}
$d\cat(\Gamma) \le n$ if and only if the fibration $$\mathcal B_{n+1}(u):E\Gamma_{n+1}\to B\Gamma$$ admits a section.
\end{prop}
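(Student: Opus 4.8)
The plan is to mirror the proof of the classical characterization (Theorem~\ref{chardcat}) and simply transport it along the fiber-wise homotopy equivalence between $p_0\colon P_0(X)\to X$ and the universal covering $u\colon\widetilde{X}\to X$ when $X=B\Gamma$ is aspherical. By Theorem~\ref{chardcat}, $d\cat(X)\le n$ if and only if the fibration $\mathcal B_{n+1}(p_0)\colon P_0(X)_{n+1}\to X$ admits a section. So it suffices to show that this fibration and $\mathcal B_{n+1}(u)\colon E\Gamma_{n+1}\to B\Gamma$ are fiber-wise homotopy equivalent over $B\Gamma$, since the property of admitting a section is invariant under fiber-wise homotopy equivalence.

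First I would recall, as noted just before the statement, that for aspherical $X$ the canonical quotient map $\Omega X\to\Gamma$ collapsing path components to points is a homotopy equivalence, and more globally $p_0\colon P_0(X)\to X$ is fiber-wise homotopy equivalent to $u\colon\widetilde{X}\to X$ (both are principal-$\Omega X$, resp.\ principal-$\Gamma$, up to homotopy, over the same aspherical base). Next I would invoke functoriality of the $\mathcal B_{n+1}$ construction applied fiber-wise: a fiber-wise map $f\colon E\to E'$ over $B$ between fibrations induces a fiber-wise map $\mathcal B_{n+1}(f)\colon E_{n+1}\to E'_{n+1}$ over $B$, and when $f$ restricts to a homotopy equivalence on each fiber the induced map restricts to a homotopy equivalence on each fiber as well (the functor $\mathcal B_{n+1}$ preserves homotopy equivalences of spaces). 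Since $E\Gamma=\widetilde{X}$ serves as the total space of $u$, this gives a fiber-wise map $\mathcal B_{n+1}(p_0)\to\mathcal B_{n+1}(u)$ over $B\Gamma$ which is a fiber-homotopy equivalence. Both are Hurewicz fibrations over the CW complex $B\Gamma$ (by the result of~\cite{DJ} that $\mathcal B_n(p)$ is a Hurewicz fibration whenever $p$ is), so a fiber-homotopy equivalence between them over $B\Gamma$ follows from Dold's theorem.

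Finally, a section of $\mathcal B_{n+1}(u)$ then transports to a section of $\mathcal B_{n+1}(p_0)$ and conversely, and Theorem~\ref{chardcat} converts the existence of such a section into the inequality $d\cat(\Gamma)=d\cat(B\Gamma)\le n$. The main obstacle is the bookkeeping needed to promote a \emph{fiber-wise} homotopy equivalence of total spaces (not merely a fiber-by-fiber homotopy equivalence) between the two $\mathcal B_{n+1}$-fibrations: one must either argue directly that the natural map $P_0(X)\to\widetilde X$ over $X$ is already a fiber-wise homotopy equivalence and then apply $\mathcal B_{n+1}$, or else appeal to Dold's theorem over $B\Gamma$, which requires knowing the two spaces are fibrations over a paracompact (here CW) base and that the comparison map is a weak fiber-homotopy equivalence. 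Everything else is routine functoriality and an application of the already-established Theorem~\ref{chardcat}.
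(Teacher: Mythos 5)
Your proposal is correct and takes essentially the route the paper intends: the paper states this proposition without a separate proof, treating it as an immediate consequence of the preceding observation that for aspherical $X=B\Gamma$ the fibration $p_0\colon P_0(X)\to X$ is fiber-wise homotopy equivalent to $u\colon\widetilde X\to X$, combined with Theorem~\ref{chardcat}. Your write-up simply fills in the details (functoriality of $\mathcal B_{n+1}$, the Dold-theorem upgrade to a fiber-homotopy equivalence, transport of sections) that the paper leaves implicit.
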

and
\begin{prop}\label{chardTCg}
$\dTC (\Gamma)\le n$ if and only if the fibration $$\mathcal B_{n+1}(v):D_{n+1}(\Gamma)\to B\Gamma\times B\Gamma$$ admits a section.
\end{prop}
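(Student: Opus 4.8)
The plan is to deduce this from Theorem~\ref{chardTC} together with the fact, recorded just above, that for aspherical $X=B\Gamma$ the endpoint fibration $\bar p:P(X)\to X\times X$ is fibre homotopy equivalent to the covering $v:D(X)\to X\times X$ associated with the diagonal subgroup $\Delta\Gamma\le\Gamma\times\Gamma$. Fixing a CW model of $B\Gamma$, Theorem~\ref{chardTC} gives that $d\TC(\Gamma)=d\TC(X)\le n$ if and only if the Hurewicz fibration $\mathcal B_{n+1}(\bar p):P(X)_{n+1}\to X\times X$ admits a section, so it suffices to show that $\mathcal B_{n+1}(\bar p)$ admits a section exactly when $\mathcal B_{n+1}(v):D_{n+1}(\Gamma)\to X\times X$ does.

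I would first make the comparison map precise. Since $P(X)\simeq X$ is aspherical and $\bar p_*\pi_1(P(X))$ is the diagonal subgroup of $\pi_1(X\times X)=\Gamma\times\Gamma$, the covering space lifting criterion produces a fibre-preserving map $c:P(X)\to D(X)$ with $v\circ c=\bar p$; on each fibre it is the quotient map $P(x,y)\to\pi_0 P(x,y)$ sending a path to its homotopy class rel endpoints. Because $X$ is aspherical, every path component of $P(x,y)\simeq\Omega X$ is contractible, so $c$ is a homotopy equivalence on every fibre of $\bar p$. As $\bar p$ is a Hurewicz fibration, $v$ a covering, and $X\times X$ a CW complex, Dold's theorem on fibre homotopy equivalences shows that $c$ is a fibre homotopy equivalence over $X\times X$; equivalently, $c$ has contractible point inverses and admits a fibre-preserving section $d:D(X)\to P(X)$ obtained cell-by-cell over $X\times X$ by a standard obstruction argument. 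Either packaging will do.

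Now I would apply the functor $\mathcal B_{n+1}$ fibrewise. It is a homotopy functor: a homotopy $H:Z\times I\to W$ induces $\mathcal B_{n+1}(Z)\times I\to\mathcal B_{n+1}(Z\times I)\xrightarrow{\mathcal B_{n+1}(H)}\mathcal B_{n+1}(W)$ through the natural map $(\mu,t)\mapsto (i_t)_*\mu$ with $i_t(z)=(z,t)$, and the same construction respects the projection to $X\times X$. Hence $c$ induces a fibre homotopy equivalence $c_{n+1}:P(X)_{n+1}\to D_{n+1}(\Gamma)$ over $X\times X$ between $\mathcal B_{n+1}(\bar p)$ and $\mathcal B_{n+1}(v)$, both of which are Hurewicz fibrations by~\cite{DJ}. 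Sections then transfer both ways: if $s$ is a section of $\mathcal B_{n+1}(\bar p)$ then $c_{n+1}\circ s$ is a section of $\mathcal B_{n+1}(v)$ (a composite of fibre-preserving maps), and if $\sigma$ is a section of $\mathcal B_{n+1}(v)$ and $e_{n+1}$ is a fibrewise homotopy inverse of $c_{n+1}$ then $e_{n+1}\circ\sigma$ is a section of $\mathcal B_{n+1}(\bar p)$. Combined with the reduction in the first paragraph this proves the proposition; the same argument with $p_0$ and the universal covering $u$ in place of $\bar p$ and $v$ yields Proposition~\ref{chardcatg}. The only mildly delicate point is that $\mathcal B_{n+1}$ sends fibre homotopy equivalences over $X\times X$ to fibre homotopy equivalences — i.e. that the homotopy-functor argument runs fibrewise — and that $\mathcal B_{n+1}(\bar p)$, $\mathcal B_{n+1}(v)$ really are Hurewicz fibrations, which is exactly the statement borrowed from~\cite{DJ}; I do not anticipate any essential obstacle.
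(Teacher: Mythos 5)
Your proposal is correct and follows the same route the paper takes: the paper's ``proof'' is precisely the preceding paragraph, which notes that for aspherical $X=B\Gamma$ the fibration $\bar p$ is fibre-wise homotopy equivalent to the covering $v$ associated to the diagonal subgroup, and then asserts that Theorem~\ref{chardTC} ``turns into'' the proposition. You have merely fleshed out the details the paper leaves implicit (constructing the comparison map $c$, invoking Dold's theorem, and checking that the fibrewise $\mathcal B_{n+1}$ functor preserves fibre homotopy equivalences).
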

Here $B\Gamma$ is an Eilenberg-MacLane complex $K(\Gamma,1)$ and $E\Gamma$ is its universal cover. Since the
universal covering $E\Gamma$ is the orbit space of the diagonal action
$$
\begin{CD}
\Gamma\times E\Gamma @>>> E\Gamma\\
@VVV @VuVV\\
\Gamma\times_\Gamma E\Gamma @>u>> B\Gamma\\
\end{CD}
$$
the above space $E\Gamma_{n+1}$ can be identified with the orbit space of the diagonal
action on  $\mathcal B_{n+1}(\Gamma)\times E\Gamma$ in
$$
\begin{CD}
\mathcal B_{n+1}(\Gamma)\times E\Gamma @>>> E\Gamma\\
@VVV @VuVV\\
\mathcal B_{n+1}(\Gamma)\times_\Gamma E\Gamma @>\mathcal B_{n+1}(u)>> B\Gamma.\\
\end{CD}
$$

We note that the fiber of both fibrations $\mathcal B_{n+1}(u)$ and $\mathcal B_{n+1}(v)$ is homeomorphic to $\mathcal B_{n+1}(\Gamma)$ which can be identified with the $n$-skeleton
of the infinite dimensional simplex $\Delta(\Gamma)$ spanned by $\Gamma$  which is supplied with the metric topology.

\subsection{Eilenberg-Ganea theorem for $d\cat$}

The classical Eilenberg-Ganea theorem states that for discrete groups $\cat\Gamma=\cd(\Gamma)$.
We recall that the cohomological dimension $\cd\Gamma$ of a group $\Gamma$ is the maximal number $n$ such that $H^n(\Gamma,M)\ne 0$ for some $\Z\Gamma$-module $M$.

\begin{thm}[Knudsen-Weinberger~\cite{KW}]\label{E-G for dcat}
$d\cat\Gamma=\cd\Gamma$ for torsion free groups $\Gamma$.
\end{thm}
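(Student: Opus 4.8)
The plan is to prove the two inequalities $d\cat\Gamma \le \cd\Gamma$ and $d\cat\Gamma \ge \cd\Gamma$ separately, using the fibration characterization from Proposition~\ref{chardcatg}. For the lower bound $d\cat\Gamma \ge \cd\Gamma$, I would invoke the general principle that $d\cat$ dominates the rational (indeed integral, with local coefficients) cup-length. Concretely, by Corollary~\ref{catdiag}, if $d\cat\Gamma < n$ then $d\cat\Gamma \ge c\ell(\delta_{n!}^*)$ for the diagonal $\delta_{n!}\colon B\Gamma \to SP^{n!}(B\Gamma)$, and by Proposition~\ref{times m} the map $\delta_{n!}$ differs from $\xi_{n!}$ only by multiplication by $n!$ on homology; since $\Gamma$ is torsion free one can arrange that the relevant Berstein--Schwarz power survives, giving $c\ell(\delta_{n!}^*) \ge \cd\Gamma$ whenever $\cd\Gamma \le \cd(\Gamma \times \Gamma)$ is finite (and the torsion-free infinite-$\cd$ case follows by a skeletal/limit argument). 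The upshot is that $d\cat\Gamma \ge \cd\Gamma$ always holds for torsion-free $\Gamma$.

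For the upper bound $d\cat\Gamma \le \cd\Gamma$, set $n = \cd\Gamma$ and realize $B\Gamma$ as an $n$-dimensional complex (Eilenberg--Ganea in the geometric dimension sense, with the $\cd=2 \Rightarrow \mathrm{gd}=2$ caveat handled as usual, or else work with a $B\Gamma$ of dimension $n$ when $n \ge 3$ and treat $n\le 2$ by hand). By Proposition~\ref{chardcatg} it suffices to produce a section of the fibration $\mathcal B_{n+1}(u)\colon E\Gamma_{n+1} \to B\Gamma$. The fiber of this fibration is $\mathcal B_{n+1}(\Gamma)$, which by Theorem~\ref{connectivity1} (with $\Gamma$ discrete, hence a $0$-dimensional CW complex, so $\mathcal B_{n+1}(\Gamma)$ is the $n$-skeleton of an infinite simplex) is $(n-1)$-connected. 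Now build the section skeleton by skeleton over $B\Gamma$: a section exists over the $0$-skeleton trivially, and the obstruction to extending over the $k$-skeleton lies in $H^k(B\Gamma; \pi_{k-1}(\mathcal B_{n+1}(\Gamma)))$ with appropriate local coefficients. Since the fiber is $(n-1)$-connected these obstruction groups vanish for $k \le n$, and since $\dim B\Gamma = n$ there are no higher cells, so the section extends all the way. This gives $d\cat\Gamma \le n = \cd\Gamma$.

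The main obstacle is reconciling the two ends in the borderline cases. On the lower-bound side, the honest subtlety is that $d\cat$ lower bounds from Corollary~\ref{catdiag} involve $\delta_{n!}$ rather than the identity, so one must verify that the relevant cohomology class (a power of the Berstein--Schwarz class $\beta_\Gamma$, pulled back appropriately through $SP^{n!}$) is genuinely nonzero; the torsion-freeness of $\Gamma$ is exactly what prevents the factor $n!$ from killing it, but this needs a clean argument, essentially the one packaged in Proposition~\ref{BerSchw} transplanted from lens spaces to general torsion-free $\Gamma$ (or a direct Dold--Thom computation showing $\xi_{n!}^*$ is injective on the subring generated by $\beta_\Gamma$). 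On the upper-bound side, the only delicate point is the geometric dimension issue when $\cd\Gamma = 2$; this is the classical Eilenberg--Ganea gap and one simply notes that in that case $d\cat\Gamma \le 2$ can be checked against a $3$-dimensional $B\Gamma$ together with the $1$-connectivity of the fiber, or cited directly from~\cite{KW}. Once these two borderline matters are dispatched, the sandwich $\cd\Gamma \le d\cat\Gamma \le \cd\Gamma$ closes.
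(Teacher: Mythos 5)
Your upper bound is correct but vastly overshoots: the paper simply writes $d\cat\Gamma \le \cat\Gamma = \cd\Gamma$ using the trivial inequality $d\cat \le \cat$ together with the classical Eilenberg--Ganea theorem, with no obstruction theory, no appeal to $\mc{B}_{n+1}(\Gamma)$ connectivity, and no geometric-dimension caveats at all. Your obstruction-theoretic construction of a section over an $n$-dimensional $B\Gamma$ is fine in principle (and you even get the right connectivity, though $\Gamma$ is discrete and disconnected, so the reference should be to Theorem~\ref{connectivity} for $\mc{B}_{n+1}(\Gamma) = \Delta(\Gamma)^{(n)}$ being $(n-1)$-connected rather than to Theorem~\ref{connectivity1}), but it needlessly resurrects the Eilenberg--Ganea gap, which the one-line argument sidesteps entirely.

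The real problem is your lower bound. You propose to run Corollary~\ref{catdiag} together with a Berstein--Schwarz cup-length computation, ``transplanting'' Proposition~\ref{BerSchw} from lens spaces to general torsion-free $\Gamma$. This transplant fails, and the failure is not a technicality. The key step in Proposition~\ref{BerSchw} is that $\pi_1(L^n_p)$ is abelian, so Dold--Thom makes $\xi_m \colon L^n_p \to SP^m(L^n_p)$ a $\pi_1$-isomorphism, after which the inclusion into $L^\infty_p$ factors through $\xi_m$ and the Berstein--Schwarz power pulls back nontrivially. For general $\Gamma$, $\pi_1(SP^m(B\Gamma)) = H_1(B\Gamma) = \Gamma^{\rm ab}$, and the Berstein--Schwarz class $\beta_\Gamma$ lives in $H^1(\Gamma; I(\Gamma))$ with $I(\Gamma)$ a genuine $\Z\Gamma$-module that does not factor through $\Gamma^{\rm ab}$. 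So there is no candidate class on $SP^{n!}(B\Gamma)$, twisted or not, of which $\beta_\Gamma^n$ is a pullback; the ``factor of $n!$ killed on homology'' worry you flag is not even the right worry. The alternative you float --- a Dold--Thom argument that $\xi_{n!}^*$ is injective on the subring generated by $\beta_\Gamma$ --- runs into the same wall.

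The paper's actual lower-bound argument is completely different in kind, and simpler. Torsion-freeness is used not to protect classes from integer multiplication but to make the $\Gamma$-action on $\Delta(\Gamma)^{(n)}$ \emph{free} (an element stabilizing a measure with finite support would be torsion). The Borel construction then shows $E\Gamma_{n+1} = \Delta(\Gamma)^{(n)} \times_\Gamma E\Gamma$ is homotopy equivalent to the $n$-dimensional complex $\Delta(\Gamma)^{(n)}/\Gamma$. A section of $\mc{B}_{n+1}(u)$ would make $\mc{B}_{n+1}(u)^*$ split injective on cohomology with arbitrary local coefficients, so $H^{n+1}(\Gamma; M) = 0$ for every $M$, i.e.\ $\cd\Gamma \le n$. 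This replaces the whole cup-length apparatus with a single dimension count, and it is the step where torsion-freeness does its work. You should replace your lower-bound argument with this one.
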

\begin{proof}
The proof is identical with the proof of Theorem 7.4 from ~\cite{KW}. We bring it here, since it is short.

In view of the inequality $d\cat\Gamma \le\cat\Gamma$ and the equality $\cat\Gamma=\cd\Gamma$ we obtain $d\cat\Gamma\le\cd\Gamma$.
If $d\cat\Gamma=n$, then by Proposition~\ref{chardcatg} there is a section of the fibration $\mathcal B_{n+1}(u)$. 
Since the action of $\Gamma$ on $\Delta(\Gamma)$ is free, the fibration $q$ in the Borel construction  
$$
\begin{CD}
\Delta(\Gamma)^{(n)} @<<< \Delta(\Gamma)^{(n)}\times E\Gamma @>>> E\Gamma\\
@VVV @VVV @VVV\\
\Delta(\Gamma)^{(n)}/\Gamma @<q<< \Delta(\Gamma)^{(n)}\times_\Gamma E\Gamma @>\mathcal B_{n+1}(u)>> B\Gamma\\
\end{CD}
$$
 has contractible fiber $E\Gamma$, and hence, is  a homotopy equivalence. If $\cd\Gamma> n$, we have $H^{n+1}(\Gamma,M)\ne 0$ for some $\Z\Gamma$ module.
The existence of a section implies that $\mathcal B_{n+1}(u)^*$ is a nonzero homomorphism. Since $\Delta(\Gamma)^{(n)}\times_\Gamma E\Gamma $ is homotopy equivalent
to an  $n$-dimensional space, we obtain a contradiction.
\end{proof}

\subsection{$\dTC$ of the free product}

\begin{prop}\label{+one}
For all groups $cd(G\times H)\ge cd(G)+1$.
\end{prop}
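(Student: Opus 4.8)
The plan is to show that $cd(G \times H)$ strictly exceeds $cd(G)$ (and symmetrically $cd(H)$) by exhibiting a nonzero cohomology class in a degree one higher than $cd(G)$. Write $m = cd(G)$ and $n = cd(H)$; without loss of generality we may assume both are finite (otherwise there is nothing to prove) and, by symmetry, that $m \le n$. The goal is then $cd(G \times H) \ge m + 1 \ge cd(G) + 1$; since the situation is symmetric in $G$ and $H$, the analogous bound $cd(G\times H)\ge cd(H)+1$ follows the same way, but in fact the single inequality $cd(G\times H)\ge cd(G)+1$ is all that is asserted.

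First I would recall that $cd(G) = m$ means $H^m(G; A) \ne 0$ for some $\mathbb{Z}G$-module $A$, while $H^i(G; -) = 0$ for $i > m$; likewise pick $B$ with $H^n(H; B) \ne 0$. The natural tool is the cohomological cross product together with the Künneth theorem for the product of groups: $B(G\times H) = BG \times BH$, so $H^*(G\times H; A \otimes_{\mathbb Z} B)$ receives a cross-product map from $H^*(G;A) \otimes H^*(H;B)$. The key point is that one cannot in general conclude $H^{m+n}(G\times H; A\otimes B)\ne 0$ directly (the Künneth sequence has a $\mathrm{Tor}$ term that could interfere), so instead I would aim only for degree $m+1$: I need a nonzero class in $H^{m+1}(G \times H; M)$ for a suitable module $M$. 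Since $cd(H) = n \ge m \ge 1$ (a group with $cd = 0$ is trivial, and if $G$ is trivial the statement is vacuous as $cd$ of the trivial group treated as $-\infty$ or $0$ poses no issue — I would handle the trivial/finite-$cd$ edge cases explicitly), there is a nonzero class $\beta \in H^1(H; B')$ for some module $B'$: indeed the Berstein–Schwarz class $\beta_H \in H^1(H; I(H))$ is nonzero whenever $H$ is nontrivial, as recalled in the excerpt before Proposition~\ref{cuplengthlowbound}. Then, taking a class $\alpha \in H^m(G; A)$ with $\alpha \ne 0$, I would form the cross product $\alpha \times \beta \in H^{m+1}(G\times H; A \otimes I(H))$ and argue it is nonzero.

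The main obstacle is precisely verifying $\alpha \times \beta \ne 0$. I would handle this by restricting along the two projections or, more robustly, by a direct algebraic argument: over the group ring $\mathbb Z[G\times H] = \mathbb Z G \otimes \mathbb Z H$, one has a Künneth-type spectral sequence (or the algebraic Künneth short exact sequence when $H^*(H;B')$ has the right flatness, which it does in degree $0$) computing $H^{m+1}(G\times H; A\otimes B')$ with an $\mathrm{Ext}$/$\mathrm{Tor}$ decomposition; the summand $H^m(G;A)\otimes H^1(H;B')$ injects, and $\alpha \otimes \beta$ is a nonzero element of it because $\alpha\ne 0$ and $\beta\ne 0$ and we may choose $A$ so that $H^m(G;A)$ is, say, a free abelian group or at least so that the tensor does not vanish (replacing $A$ by a suitable module, e.g. $A = \bigoplus$ copies engineered so $H^m(G;A)$ contains a $\mathbb Z$ summand, using that $cd(G)=m$ is detected already with finitely generated coefficients). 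In practice the cleanest route is: there is a $\mathbb Z G$-module $A$ with $\mathrm{Hom}_{\mathbb Z G}(-,A)$ not exact in degree $m$ on a projective resolution $P_\bullet$ of $\mathbb Z$ over $\mathbb Z G$, equivalently $H^m(G;A)\ne 0$; tensoring $P_\bullet$ with a length-$1$ projective resolution $Q_\bullet$ of $\mathbb Z$ over $\mathbb Z H$ (which exists since $cd(H)\ge 1$, and we truncate/choose so that $Q_0\to Q_{-1}$, i.e. the piece realizing the nonzero $H^1$) gives a resolution of $\mathbb Z$ over $\mathbb Z[G\times H]$ of length $m+1$ whose cohomology with coefficients in $A \otimes B'$ is nonzero in degree $m+1$ by a diagram chase. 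I expect writing this chase carefully — making sure the chosen module and resolution genuinely produce a surviving class and are not killed by a boundary — to be the delicate part; everything else (the reduction to $m\le n$, the edge cases, invoking $BG\times BH$ as a model for $B(G\times H)$) is routine.
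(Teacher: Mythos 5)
Your overall strategy --- exhibit a nonzero class in $H^{cd(G)+1}(G\times H;M)$ by crossing a top class for $G$ with a degree-one class for $H$ --- is sound in spirit, but the execution has a genuine gap, and the paper uses a cleaner route that avoids it.

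The gap is in the step where you ``tensor $P_\bullet$ with a length-$1$ projective resolution $Q_\bullet$ of $\mathbb Z$ over $\mathbb Z H$ (which exists since $cd(H)\ge 1$, and we truncate\ldots).'' Having $cd(H)\ge 1$ does not give you a projective resolution of $\mathbb Z$ of length $1$; that would require $cd(H)\le 1$. Truncating a longer resolution at length $1$ produces a complex $Q_1\to Q_0$ whose $H_1$ is nonzero, so $P_\bullet\otimes Q_\bullet$ is not a resolution of $\mathbb Z$ over $\mathbb Z[G\times H]$ and its $\mathrm{Ext}$ groups do not compute $H^*(G\times H;-)$. The diagram chase you gesture at is therefore chasing in the wrong complex. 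There is also a secondary issue you yourself flag: even under a genuine K\"unneth decomposition, $H^m(G;A)\otimes_{\mathbb Z} H^1(H;B')$ can vanish for nonzero factors (coprime torsion), so some engineering of the coefficient modules is genuinely needed, not just ``in practice.''

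The paper sidesteps all of this. After reducing to the case where both cohomological dimensions are finite (otherwise the inequality is vacuous), it observes that $H$ is then torsion free, hence any nontrivial such $H$ contains a copy of $\mathbb Z$. Since restriction to a subgroup cannot increase cohomological dimension, $cd(G\times H)\ge cd(G\times\mathbb Z)$, and then one invokes the standard identity $cd(G\times\mathbb Z)=cd(G)+1$ (Brown). This outsources exactly the delicate K\"unneth-type step to the special case $H=\mathbb Z$, where the short free resolution $0\to\mathbb Z[\mathbb Z]\xrightarrow{\,t-1\,}\mathbb Z[\mathbb Z]\to\mathbb Z\to 0$ really does exist and makes the tensor-of-resolutions argument go through. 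If you want to keep your direct approach, the honest fix is essentially to reprove $cd(G\times\mathbb Z)=cd(G)+1$; restricting along $\mathbb Z\le H$ first, as the paper does, is the shortest path.

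One further note: as stated the proposition implicitly requires $H\ne 1$ (otherwise $G\times H=G$ and the inequality fails). You raise the edge cases but don't quite resolve them; the paper's reduction to finite cohomological dimension plus ``$H$ contains $\mathbb Z$'' silently carries this assumption.
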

\begin{proof}
We may assume that the groups have finite cohomological dimension. In particular, $H$ is torsion free. Hence it contains a copy of integers.
Since the cohomological dimension of a subgroup
does not exceed the cohomological dimension of a group~\cite{Br}, it follows
Then $$cd(G\times H)\ge cd(G\times\Z)\ge cd(G)+1.$$
\end{proof}

\begin{thm}\label{+two}~\cite{DS}
If a group $H$ is not free, then  $cd(G\times H)\ge cd(G)+2$ for all groups $G$.
\end{thm}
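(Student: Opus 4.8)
The plan is to draw the two extra dimensions out of a Lyndon--Hochschild--Serre spectral sequence, after first upgrading the hypothesis ``$H$ is not free'' to a genuine dimension statement. First I would dispose of the degenerate cases: if $\cd G=\infty$ (resp.\ $\cd H=\infty$) then $\cd(G\times H)=\infty$, since $G$ (resp.\ $H$) is a retract of $G\times H$; so we may assume $n:=\cd G<\infty$ and $\cd H<\infty$. A group of finite cohomological dimension is torsion free, and by the Stallings--Swan theorem a torsion-free group of cohomological dimension $\le 1$ is free. As $H$ is not free, this forces $\cd H\ge 2$. So the task becomes: transfer two units of dimension from the $H$-factor into the product.

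Next I would set up the cohomology spectral sequence of the split extension $1\to H\to G\times H\to G\to 1$, that is $E_2^{p,q}=H^p\!\big(G;H^q(H;M)\big)\Rightarrow H^{p+q}(G\times H;M)$ for a coefficient $\mathbb Z[G\times H]$-module $M$ still to be chosen, and concentrate on the entry $E_2^{n,2}$, which feeds total degree $n+2$. Differentials leaving $E_r^{n,2}$ land in $E_r^{n+r,\,3-r}$, which vanishes because $n+r>n=\cd G$; differentials entering $E_r^{n,2}$ originate in $E_r^{n-r,\,r+1}$, hence involve $H^{r+1}(H;M)$ with $r+1\ge 3$. To kill these I would choose $M$ so that $H^{j}(H;M)=0$ for all $j\ge 3$: concretely, start from a $\mathbb Z H$-module realizing $\cd H$ in top degree and push it down by iterated cosyzygies until its $H$-cohomology is concentrated in degrees $\le 2$ with $H^2\ne 0$, then combine it with a $\mathbb Z G$-module realizing $H^n(G;-)\ne0$. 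With such an $M$ the entry $E_2^{n,2}$ is a permanent cycle, so it is a subquotient of $H^{n+2}(G\times H;M)$, and it suffices to arrange $E_2^{n,2}=H^n\!\big(G;H^2(H;M)\big)\ne 0$.

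The heart of the matter is therefore the realization step: producing a single $M$ for which $H^n\!\big(G;H^2(H;M)\big)\ne 0$, i.e.\ identifying $H^2(H;M)$ as a $\mathbb Z G$-module on which $H^n(G;-)$ does not vanish. I would try $M=A\otimes_{\mathbb Z}B$ with $A$ a free $\mathbb Z G$-module carrying $H^n(G;A)\ne0$ and $B$ the cosyzygy module above, and then compute $H^2(H;M)$ as a $\mathbb Z G$-module and cup against a top class of $G$. This is essentially the assertion that the relevant part of $\cd$ is additive across the product, and it is the one place where a naive argument can fail, because for groups that are not of type $FP$ the top cohomology functor of $G$ need not commute with the infinite direct sums that appear, and more pointedly $\cd$ is not additive in general (for instance $\cd(\mathbb Q\times\mathbb Q)=3$, not $4$). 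Controlling these coefficient phenomena --- choosing $A$ and $B$ so that the tensor-product computation goes through and the cup product is detected, using finiteness properties of the groups where available --- is exactly the step I expect to be the main obstacle; the spectral-sequence bookkeeping around it is routine.
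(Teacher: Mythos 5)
The paper offers no proof of this statement at all; it is quoted from~\cite{DS} without argument, so there is no in-text proof to compare your plan against. Measured against the statement itself, however, your cautionary remark is not a side issue --- it refutes the theorem as reproduced here. The group $\mathbb{Q}$ is torsion-free abelian, hence not free (an abelian free group is $1$ or $\mathbb{Z}$), and $\cd\mathbb{Q}=2$; but $\cd(\mathbb{Q}\times\mathbb{Q})=3$, not $\ge 4$. So a hypothesis has been dropped in transcription. What the application in Theorem~\ref{free} actually needs --- and what is classical --- is Bieri's additivity theorem: if $G$ is of type $FP$ with $\cd G<\infty$ and $\cd H<\infty$, then $\cd(G\times H)=\cd G+\cd H$; combined with Stallings--Swan (torsion-free and not free forces $\cd H\ge 2$) this yields the displayed inequality. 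In the one place Theorem~\ref{+two} is invoked, $G$ is a hypothetical Eilenberg--Ganea counterexample, hence finitely presented of type $FP$, so the application itself is safe, but the statement as recorded, without the finiteness clause, is not.

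Granting the missing hypothesis, your spectral-sequence skeleton is the right one and you have located the genuine obstruction correctly. Dimension-shifting $B$ to a cosyzygy so that $H^2(H;B)\neq 0$ and $H^{\ge 3}(H;B)=0$ disposes of the incoming differentials, and $E_2^{p,q}=0$ for $p>\cd G$ disposes of the outgoing ones; the real work is making $E_2^{n,2}=H^n\!\big(G;H^2(H;A\otimes_{\mathbb{Z}}B)\big)$ visibly nonzero, which requires commuting $A\otimes_{\mathbb{Z}}(-)$ past $H^*(H;-)$ and then detecting a class in $H^n(G;-)$. That commutation is exactly where type~$FP$ enters (cohomology of an $FP$ group commutes with filtered colimits and arbitrary direct sums), and it is exactly what fails for $\mathbb{Q}$. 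You did not close the argument, but you correctly diagnosed both why it is delicate and what extra input is needed; the point worth emphasizing is that this delicacy is reflected in a missing hypothesis in the theorem statement itself, not merely in the proof.
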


\begin{thm}\label{free}
The equality
$$d\TC(G\ast H)=\max\{d\TC (G),d\TC (H), \cd(G\times H)\}$$
holds for all torsion free groups  $G$ and $H$.
\end{thm}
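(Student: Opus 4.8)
The plan is to prove the two inequalities separately. For the lower bound, note that $B(G\ast H)$ can be taken to be $BG\vee BH$, and the three retractions/inclusions $BG\hookrightarrow BG\vee BH$, $BH\hookrightarrow BG\vee BH$, together with the inclusion $BG\times BH\hookrightarrow (BG\vee BH)\times(BG\vee BH)$ give $d\TC(G\ast H)\ge\max\{d\TC(G),d\TC(H)\}$ by naturality of $d\TC$ under retracts, and $d\TC(G\ast H)\ge d\cat(BG\times BH)=d\cat(G\times H)=\cd(G\times H)$ (using Theorem~\ref{E-G for dcat}) via the inequality $d\TC\ge d\cat(X\times Y)$ applied as in the first half of the proof of Theorem~\ref{wedge}. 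So $d\TC(G\ast H)\ge\max\{d\TC(G),d\TC(H),\cd(G\times H)\}$.

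For the upper bound, set $n=\max\{d\TC(G),d\TC(H),\cd(G\times H)\}$ and write $X=BG$, $Y=BH$, chosen to be CW complexes. The natural move is to invoke Theorem~\ref{wedge}, whose conclusion $d\TC(X\vee Y)=\max\{d\TC(X),d\TC(Y),d\cat(X\times Y)\}$ is exactly what we want once we identify $d\cat(X\times Y)=\cd(G\times H)$. The obstacle is that Theorem~\ref{wedge} requires the dimension hypothesis $\max\{\dim X,\dim Y\}<n$, and a priori $BG$, $BH$ need not be finite-dimensional at all (e.g. if $G$ or $H$ has infinite cohomological dimension) or may have dimension equal to $n$. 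The resolution is to replace $BG$ and $BH$ by suitable skeleta. First, by Proposition~\ref{+one} and Theorem~\ref{+two}, $\cd(G\times H)\ge\cd G+1$, and moreover $\cd(G\times H)\ge\cd(G)+2$ when $H$ is not free; combined with $d\TC(G)\le\cd(G\times G)$-type bounds one controls the size of $n$ relative to $\cd G$ and $\cd H$. Concretely, since $d\TC(G)\le d\cat(G\times G)=\cd(G\times G)$ is generally too weak, I would instead argue: $d\TC(G)\le 2\cd(G)$ (or the relevant known upper bound) and similarly for $H$, so $n\ge\max\{\cd G,\cd H\}+1$ in all the cases that matter, hence we may take $X=BG^{(n)}$ and $Y=BH^{(n)}$ with $\dim X,\dim Y\le n$.

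Then I would use Theorem~\ref{modified} rather than Theorem~\ref{wedge}: it allows $\max\{\dim X,\dim Y\}\le n$ provided $H^n(X;\mathcal F)=0=H^n(Y;\mathcal G)$ for all local coefficients. Taking $X=BG^{(n)}$ we have $\dim X=n$, but $X$ is $n$-dimensional and the relevant $H^n$ need not vanish; so instead I take $X=BG^{(n-1)}$ when $\cd G\le n-1$ (so that $BG^{(n-1)}$ is already a $K(G,1)$ up through the relevant range, or at least $d\TC(BG^{(n-1)})=d\TC(G)$ by an argument parallel to Proposition~\ref{CW}), giving $\dim X\le n-1<n$ and reducing to Theorem~\ref{wedge} directly. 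The genuinely delicate point is checking that passing to the skeleton does not change $d\TC$ and $d\cat$ in the relevant range: one needs a $d\TC$-analogue of Proposition~\ref{CW}, i.e. that $d\TC(BG^{(k)})\le n$ whenever $d\TC(G)\le n$ and $k$ is large enough relative to $n$ (here $k=n$ or $n+1$ suffices), and that $d\cat(BG^{(k)}\times BH^{(k)})\le\cd(G\times H)$. The first follows from the connectivity estimate Theorem~\ref{connectivity} (the fiber $\mathcal B_{n+1}(\Omega(\cdot))$ is $(n-1)$-connected) exactly as in the proof of Proposition~\ref{CW}, lifting a section over the skeleton; the second from Theorem~\ref{E-G for dcat} together with the fact that $B(G\times H)^{(m)}$ for $m\ge\cd(G\times H)$ carries $d\cat$ equal to $\cd(G\times H)$. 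I expect the main obstacle to be precisely this bookkeeping of dimensions versus the value of $n$ — ensuring that in every case (both $G,H$ free; one free one not; neither free) one can choose finite-dimensional skeletal models of dimension strictly below $n$, or exactly $n$ with vanishing top local cohomology, so that one of Theorem~\ref{wedge} or Theorem~\ref{modified} applies — after which the free-product formula drops out of the wedge formula.
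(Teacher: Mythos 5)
Your lower‐bound argument and the overall strategy (reduce to the wedge formula, Theorem~\ref{wedge} or Theorem~\ref{modified}) match the paper's; the discrepancy is in how you arrange for finite‐dimensional models, and there you have a real gap.

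The paper does not pass to skeleta at all. It uses the Eilenberg--Ganea/Stallings--Swan theorem on geometric dimension: a torsion‐free group $G$ with $\cd G<\infty$ has a $K(G,1)$ of dimension $\cd G$, with the sole possible exception $\cd G=2$, $\dim BG=3$ (a hypothetical counterexample to the Eilenberg--Ganea conjecture). Combined with Proposition~\ref{+one} and Theorem~\ref{+two} (giving $\cd(G\times H)\ge\cd G+1$, and $\ge\cd G+2$ when $H$ is not free), one then checks directly that $\max\{\dim BG,\dim BH\}<\cd(G\times H)$ in all cases except when one group is a hypothetical EG counterexample and the other is free, where equality $\max\{\dim BG,\dim BH\}=3=\cd(G\times H)$ may hold; in that last case Theorem~\ref{modified} applies because $H^3(BG;\mathcal F)=H^3(G;\mathcal F)=0$ for all local coefficients (as $\cd G=2$). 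No skeleta, no auxiliary invariance lemma.

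Your route via $X=BG^{(n)}$ or $BG^{(n-1)}$ runs into two problems. First, when $BG$ is infinite-dimensional its $(n-1)$-skeleton is \emph{not} a $K(G,1)$: it has the right fundamental group but (potentially) nontrivial higher homotopy, so your phrase ``$BG^{(n-1)}$ is already a $K(G,1)$ up through the relevant range'' is not correct, and $d\TC\bigl(BG^{(n-1)}\bigr)$ is not a priori equal to $d\TC(G)$. Second, and more seriously, the $d\TC$-analogue of Proposition~\ref{CW} that you appeal to is not established and does not follow ``exactly as in the proof of Proposition~\ref{CW}.'' That proof extends a section over the $(n+1)$-dimensional base $X^{(n+1)}$ against a homotopy fiber that is $n$-connected, so obstructions live in groups $H^{i+1}(X^{(n+1)};\pi_i)$ with $i\ge n+1$, which vanish for dimension reasons. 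For $d\TC$ the base is $X^{(k)}\times X^{(k)}$, of dimension $2k$, while the fiber $\mathcal B_{n+1}(\Omega X)$ is still only $(n-1)$-connected (Theorem~\ref{connectivity}); obstructions in the range $n+1\le i+1\le 2k$ are not killed by the dimension of the base, so the argument does not close. This is precisely the step where your proposal is incomplete. The simplest repair is to abandon skeleta and invoke geometric dimension as the paper does; alternatively you would have to prove a genuine $d\TC$-version of Proposition~\ref{CW}, which requires more than the connectivity estimate you cite.
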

\begin{proof}
First, we consider the case when there are classifying spaces $BG$ and $BH$ satisfying $\dim BG=\cd G$ and $\dim BH=\cd(H)$. By Proposition~\ref{+one} and Proposition~\ref{E-G for dcat} we obtain 
$$\max\{\dim BG,\dim BH\}<\cd(G\times H)=d\cat(G\times H)=d\cat(BG\times BH).$$ 
Thus, the condition of Theorem~\ref{wedge} is satisfied with $X=BG$ and $Y=BH$.
Then by Theorem~\ref{wedge}, in view of Proposition~\ref{E-G for dcat} and the definitions of $d\cat$ and $d\TC$ for groups, we obtain that 
$$d\TC(G\ast H)=\max\{d\TC(G), d\TC(H), \cd(G\times H)\}.$$

If $G$ is a hypothetical counter-example to the Eilenberg-Ganea conjecture, i.e. $\cd(G)=2$ and $\dim BG=3$, and  $\cd(H)\ge 2$,  then by Theorem~\ref{+two}  $\cd(G\times H)\ge \cd(G)+2$ and $\cd(G\times H)\ge \cd(H)+2$. Again we obtain that the condition of Theorem~\ref{wedge}
$$\max\{\dim BG,\dim BH\}<\cd(G\times H)=d\cat(G\times H)=d\cat(BG\times BH)$$ is satisfied.

\

Now we assume that $G$ is a hypothetical counter-example to the Eilenberg-Ganea conjecture and $\cd(H)=1$. Therefore, by Stallings theorem $H$ is a free group.
Then by the Eilenberg-Ganea theorem and Proposition~\ref{+one} we obtain
$$\cat(G\times H)=\cd(G\times H)\ge 3\ge \max\{\dim BG,\dim BH\}.$$ Then by Theorem~\ref{modified} $$d\TC(BG\vee BH)=\max\{d\TC(BG), d\TC(BH), d\cat(BG\times BH)\}$$
and the required equality follows.
\end{proof}

\subsection{Surface groups}

\begin{prop}\label{connected sum}
Let $M, N$ be two surfaces where $M$ is orientable. Then for the connected sum,
$$d\TC(M\#N)\ge d\TC(N).$$
\end{prop}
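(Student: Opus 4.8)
The plan is to run the Poincaré-duality obstruction argument of Theorem~\ref{dTClb} and Proposition~\ref{cap}, but with the wedge $X\vee Y$ replaced by $N$ itself. Let $c\colon M\#N\to N$ be the collapse map that crushes to a point the summand $M$ with an open disc removed (equivalently, the pinch $M\#N\to M\vee N$ followed by the retraction onto $N$). If $M=S^2$ then $c$ is a homotopy equivalence and there is nothing to prove, so assume $\pi_1M\ne1$. The one place where orientability of $M$ enters is here: since $w_1(M)=0$ we have $w_1(M\#N)=c^*w_1(N)$, hence $c^*\mathcal O_N=\mathcal O_{M\#N}$ and $c$ is a degree-one map, $c_*[M\#N]=[N]$ for the (twisted) fundamental classes. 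Therefore $c\times c\colon (M\#N)^2\to N^2$ is degree one as well, and by the projection formula together with twisted Poincaré duality the homomorphism $(c\times c)^*\colon H^*(N^2;\mathcal A)\to H^*((M\#N)^2;(c\times c)^*\mathcal A)$ is injective for every local coefficient system $\mathcal A$.

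With this in hand I would argue by contradiction. Suppose $d\TC(M\#N)\le d\TC(N)-1=:n$. By Theorem~\ref{chardTC} the fibration $\mathcal B_{n+1}(\bar p^{M\#N})$ admits a section over $(M\#N)^2$, and pushing paths forward along $c$ yields a lift of $c\times c$ through $\mathcal B_{n+1}(\bar p^N)$, i.e.\ a section of the pulled-back fibration $(c\times c)^*\mathcal B_{n+1}(\bar p^N)\to(M\#N)^2$. On the other hand $\mathcal B_{n+1}(\bar p^N)$ has \emph{no} section over $N^2$, since $d\TC(N)=n+1$. Its fibre is $\mathcal B_{n+1}(\Omega N)$, which is $(n-1)$-connected by Theorem~\ref{connectivity}, so when $n+1=\dim(N\times N)=4$ — the case relevant to Theorem~\ref{nonor}, where $d\TC(N)=4$ — the only obstruction to a section over $N^2$ is the primary class $\kappa\in H^{4}\bigl(N^2;\pi_3(\mathcal B_4(\Omega N))\bigr)$, so $\kappa\neq0$ and hence $[N^2]\cap\kappa\neq0$ by Poincaré duality. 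The projection formula then yields
$$(c\times c)_*\bigl([(M\#N)^2]\cap(c\times c)^*\kappa\bigr)=[N^2]\cap\kappa\neq0,$$
so $(c\times c)^*\kappa\neq0$; but $(c\times c)^*\kappa$ is precisely the primary obstruction to a section of $(c\times c)^*\mathcal B_4(\bar p^N)$, contradicting the section produced above. Hence $d\TC(M\#N)\ge d\TC(N)$.

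The step I expect to be the main obstacle is the general case $d\TC(N)<4$: then a section of $\mathcal B_{n+1}(\bar p^N)$ over $N^2$ must be extended past the $n$-skeleton, and the secondary and higher obstructions (living in $H^{j}(N^2;\pi_{j-1})$ for $n+1\le j\le 4$) carry indeterminacy, so the clean "pull back along $c\times c$ and invoke injectivity of $(c\times c)^*$" has to be upgraded to a cellwise comparison of the two fibrations over $c\times c$. Since every obstruction class is natural and $(c\times c)^*$ is injective with all local coefficients I expect this to succeed, but it is exactly the point that requires care; it is, however, dispensable for the applications in this paper, where only the case $d\TC(N)=4$ established above is used.
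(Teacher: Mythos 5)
Your argument is essentially the same as the paper's: both use the degree-one collapse map $q\colon M\#N\to N$, use orientability of $M$ to identify $q^*\mathcal O_N$ with $\mathcal O_{M\#N}$, observe via Theorem~\ref{connectivity} that the fiber $\mathcal B_4(\Omega N)$ is $2$-connected so that the primary obstruction $\kappa\in H^4(N^2;\pi_3)$ is the complete obstruction over the $4$-dimensional $N^2$, and then push the nonvanishing of $[N^2]\cap\kappa$ back through $(q\times q)^*$ by twisted Poincar\'e duality to conclude $(q\times q)^*\kappa\neq0$ and hence $d\TC(M\#N)\ge d\TC(q)\ge 4$. In fact the paper also restricts to the case $d\TC(N)=4$ (it opens with ``we may assume $d\TC(N)>3$''), so you have not lost any generality relative to it; your closing caveat about the cases $d\TC(N)<4$ is a fair observation, since for $d\TC(N)\le 2$ the bound follows trivially from $d\TC(M\#N)\ge d\cat(M\#N)\ge 2$ when $M\#N$ is aspherical, but the hypothetical case $d\TC(N)=3$ is not covered by the primary-obstruction argument as written; the paper, like you, only needs and only really proves the $d\TC(N)=4$ case, which is all that Theorem~\ref{nonor} uses.
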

\begin{proof} We may assume that $d\TC(N)>3$. Then $d\TC(N)=4$.
Let $F$ be the fiber of fibration $\mathcal B_4(\bar p)$ for $N$. By Theorem~\ref{connectivity} the space $\mathcal B_4(F)$ is 2-connected.
Then the primary obstruction $\kappa\in H^4(N\times N;\mathcal F)$ to the section of $\mathcal B_4(\bar p):P(N)_4\to N\times N$ is nontrivial, $\kappa\ne 0$.
Since $M$ is orientable, we have the equality ${\mathcal O}_{M\# N}=q^*{\mathcal O}_N$ for  the orientation sheaves
where $q:M\#N\to N$ is a map collapsing $M$ to a point.
Then ${\mathcal O}_{(M\# N)\times(M\# N)}=(q\times q)^*{\mathcal O}_{N\times N}$.
By the Poincare Duality with local coefficients~\cite{Bre}  for $N\times N$ we obtain
$$
(q\times q)_*([(M\# N)\times(M\# N)]\cap (q\times q)^*\kappa)=[N\times N]\cap\kappa\ne 0
$$
where $[X]$ denotes a fundamental class of $X$.
Thus, we obtain $(q\times q)^*\kappa\ne 0$.
Hence $q\times q$ does not admit a lift with respect to $\mathcal B_4(\bar p)$. By Proposition~\ref{liftD}, $d\TC(M\#N)\ge d\TC(q)\ge 4$.
\end{proof}

\begin{thm}\label{nonor}
$d\TC(N_g)=4$ for $g>3$.
\end{thm}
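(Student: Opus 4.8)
The plan is to show both inequalities $d\TC(N_g)\le 4$ and $d\TC(N_g)\ge 4$ for $g>3$. The upper bound is automatic: since $N_g$ is a surface, $\cd(N_g)=2$, so $d\cat(N_g\times N_g)\le\cat(N_g\times N_g)=\cd(\pi_1(N_g)\times\pi_1(N_g))\le 4$, and hence $d\TC(N_g)\le d\cat(N_g\times N_g)\le 4$. So the real content is the lower bound $d\TC(N_g)\ge 4$, i.e. ruling out $d\TC(N_g)\le 3$.

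For the lower bound I would exploit the hypothesis $g>3$ to write $N_g$ as a connected sum involving an orientable summand. Specifically, for $g>3$ one can write $N_g = M\#N_{g'}$ where $M$ is a positive-genus orientable surface (so $g' \ge 1$ and $g' < g$), or more to the point $N_g = T^2 \# N_{g-2}$ with $g-2 > 1$. Then by Proposition~\ref{connected sum} (with the orientable surface playing the role of $M$ and the nonorientable summand $N$), $d\TC(N_g)\ge d\TC(N_{g-2})$. Iterating, one reduces to showing $d\TC$ of a small nonorientable surface is $4$; but Proposition~\ref{connected sum} requires us to already know $d\TC(N)=4$ for the summand in order to conclude. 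So the cleaner route is to realize $N_g$ directly as $M\#N_h$ with $M$ orientable of genus $\ge 1$ and $h$ large enough that $N_h$ itself is handled — ultimately this bottoms out by needing at least one nonorientable surface whose $d\TC$ equals $4$ established by a different argument, or by arranging the decomposition so that $N$ is a Klein bottle or $\mathbb{RP}^2\#\mathbb{RP}^2$-type piece. The safest self-contained approach: use Proposition~\ref{connected sum} with the orientable $M$, applied to $N = N_{g}$ itself split off differently, or invoke that $d\TC(N_h)\ge 3$ trivially from $\cd=2$ giving a nonzero class, and run the obstruction-theoretic argument of Proposition~\ref{connected sum} directly on $N_g$.

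Concretely, I would proceed as in the proof of Proposition~\ref{connected sum}: write $N_g = M \# N$ with $M$ orientable of positive genus (possible precisely because $g>3$ leaves room for $M$ to be nonempty orientable and $N$ still nonorientable), let $q:N_g\to N$ collapse $M$ to a point, and let $\kappa\in H^4(N\times N;\mathcal F)$ be the primary obstruction to a section of $\mathcal B_4(\bar p^N)$. By Theorem~\ref{connectivity} the fiber $\mathcal B_4(\Omega N)$ is $2$-connected, so $\kappa$ is genuinely the primary obstruction, and it is nonzero because $d\TC(N)\ge\cd(N\times N)$-type zero-divisor arguments — or, appealing to the already-known orientable case and the fact that the nonorientable surfaces of genus $\le 3$ still satisfy $d\TC=4$ (established via $\mathbb{RP}^2$ and Theorem~\ref{dTClb}) — force $\kappa\ne0$. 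Then, since $M$ is orientable, $\mathcal O_{N_g} = q^*\mathcal O_N$, hence $\mathcal O_{N_g\times N_g}=(q\times q)^*\mathcal O_{N\times N}$, and Poincaré duality with local coefficients gives $(q\times q)_*\big([(N_g)^2]\cap(q\times q)^*\kappa\big)=[N^2]\cap\kappa\ne0$, so $(q\times q)^*\kappa\ne0$ and $q\times q$ admits no lift against $\mathcal B_4(\bar p)$. By Proposition~\ref{liftD}, $d\TC(N_g)\ge d\TC(q)\ge 4$.

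The main obstacle I anticipate is the base case: Proposition~\ref{connected sum} only \emph{propagates} the value $4$ from a nonorientable summand, so the argument is not complete until one knows $d\TC(N)=4$ for the relevant small nonorientable surface $N$ appearing in the decomposition of $N_g$ — and for $g>3$ the decomposition $N_g=M\#N$ with $M$ orientable forces $N\in\{N_1=\mathbb{RP}^2, N_2=K, N_3\}$ only when $g\le$ something, so actually for $g>3$ we can always choose $N$ to be, say, the Klein bottle $N_2$ together with a large orientable $M$, and then $d\TC(K)$ must be shown to be $4$. That is exactly where one needs Theorem~\ref{dTClb} with $X=Y=\mathbb{RP}^2$: one must verify $d\cat(\mathbb{RP}^2\times\mathbb{RP}^2)=4$ and $d\TC(\mathbb{RP}^2)<4$, the former being the delicate point (it requires the $d\cat$ lower bound machinery of Corollary~\ref{catdiag} and Proposition~\ref{BerSchw} applied to $\mathbb{RP}^2=L^2_2$). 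So the key steps in order are: (1) reduce to the orientable-summand decomposition using $g>3$; (2) establish $d\TC=4$ for the small nonorientable base surface via Theorem~\ref{dTClb} and the $\mathbb{RP}^2$ computation; (3) run the Poincaré-duality obstruction argument of Proposition~\ref{connected sum} to propagate to all $g>3$; (4) combine with the trivial upper bound $d\TC(N_g)\le d\cat(N_g^2)\le 4$.
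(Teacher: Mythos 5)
There is a genuine gap in your base case, and it is exactly the point the paper is most careful about. You propose to establish $d\TC(K)=4$ for the Klein bottle via Theorem~\ref{dTClb} with $X=Y=\mathbb{RP}^2$, which would require $d\cat(\mathbb{RP}^2\times\mathbb{RP}^2)=4$. But $\mathbb{RP}^2$ is not aspherical, so the Knudsen-Weinberger equality $d\cat\Gamma=\cd\Gamma$ (Theorem~\ref{E-G for dcat}) does not apply, and the paper has no mechanism for pinning down $d\cat(\mathbb{RP}^2\times\mathbb{RP}^2)$; your appeal to Corollary~\ref{catdiag} and Proposition~\ref{BerSchw} for $L^2_2=\mathbb{RP}^2$ does not yield the needed lower bound either (that machinery gives $\cat(\xi_m)\ge n$ for $L^n_p$ with $n=p-1$, which for $p=2$ is only $n=1$). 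In fact the paper explicitly records both $d\TC(K)$ and $d\TC(N_3)$ as open questions at the end of the section, so any argument that requires actually computing $d\TC(K)$ cannot succeed with the tools available.

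The paper's proof avoids this trap by a case split that never needs to decide the value of $d\TC(K)$. If $d\TC(K)=4$, then Proposition~\ref{connected sum} gives $d\TC(M\#K)\ge 4$ for $M$ orientable, covering all $g>3$. If instead $d\TC(K)=3$, the paper applies Theorem~\ref{dTClb} with $X=Y=K$ itself — crucially $K$ \emph{is} aspherical with torsion-free fundamental group, so Theorem~\ref{E-G for dcat} gives $d\cat(K\times K)=\cd\bigl(\pi_1(K)\times\pi_1(K)\bigr)=4$, and the hypothesis $d\TC(K)=3<4$ is now exactly what is needed — to conclude $d\TC(K\#K)=4$; Proposition~\ref{connected sum} then handles even $g>3$. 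For odd $g>3$ a further sub-case appears: if $d\TC(K\#\mathbb{RP}^2)=4$ one proceeds directly, and if $d\TC(K\#\mathbb{RP}^2)=3$ one uses the double cover $K\#K\to K\#\mathbb{RP}^2$ together with Proposition~\ref{covering} and Theorem~\ref{E-G for dcat} to get $d\cat\bigl(K\times(K\#\mathbb{RP}^2)\bigr)\ge 4$, whence Theorem~\ref{dTClb} applied to $X=K$, $Y=K\#\mathbb{RP}^2$ gives $d\TC(K\#K\#\mathbb{RP}^2)=4$, and Proposition~\ref{connected sum} propagates. Your reduction step (3) via Proposition~\ref{connected sum} and your upper-bound observation (4) are both correct and match the paper; but step (2), as you have set it up, does not go through, and it is precisely the piece the paper's case analysis is designed to replace.
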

\begin{proof}
If for the Klein bottle  $d\TC(K)=4$ by Proposition~\ref{connected sum} $$d\TC(M\#K)\ge d\TC(K)=4$$ where $M$ is orientable. This covers  all $N_g$ with $g>3$.

If $d\TC(K)=3$, then by Proposition~\ref{E-G for dcat} $$d\cat(K\times K)=\cd(\pi_1(K\times K))=4.$$ Theorem~\ref{dTClb} implies that $d\TC(K\#K)=4$. Then by Proposition~\ref{connected sum}
$$d\TC(T\#\dots\#T\#K\#K)\ge d\TC(K\#K)=4.$$ This covers all $N_g$ with even $g>3$. If $d\TC(K\#\mathbb RP^2)=4$, Proposition~\ref{connected sum} covers the case of odd $g>3$.

Assume that $d\TC((K\#\mathbb RP^2)=3$.
Proposition~\ref{covering} applied to the covering map $K\#K\to K\#\mathbb RP^2$ and Proposition~\ref{E-G for dcat} imply $$d\cat(K\times(K\#\mathbb R P^2))\ge d\cat(K\times (K\# K))=\cd(K\times (K\# K))=4.$$
Then by Theorem~\ref{dTClb}, $d\TC(K\#(K\#\mathbb RP^2))=4$. Then by Proposition~\ref{connected sum} $d\TC(N_g)\ge 4$ for odd $g>3$.
\end{proof}

\

\begin{question}
What are  the values of $d\TC(K)$ and $d\TC(N_3)$ ?
\end{question}

\subsection{Finite groups}
We note that for finite groups $\cat(G)=\TC(G)=\infty$. It is not the case for distributional invariants. It was shown in~\cite{DJ} and ~\cite{KW} that $d\cat(\mathbb Z_2)=d\TC(\mathbb Z_2)=1$. This computation is a special case of the following.

\begin{thm}[Knudsen-Weinberger~\cite{KW}]\label{KW}
For a finite group $G$, $d\cat(G)\le |G|-1$ and $d\TC(G)\le |G|-1$.
\end{thm}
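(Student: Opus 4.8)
The statement to prove is Theorem~\ref{KW}: for a finite group $G$, $d\cat(G)\le |G|-1$ and $d\TC(G)\le |G|-1$.

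\medskip

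The plan is to build an explicit distributional section using the fact that $G$ acts freely on $E G$ with exactly $|G|$ elements. I would use the characterizations from Propositions~\ref{chardcatg} and~\ref{chardTCg}: it suffices to produce a section of $\mathcal B_{|G|}(u):EG_{|G|}\to BG$ for the first inequality, and a section of $\mathcal B_{|G|}(v):D_{|G|}(\Gamma)\to BG\times BG$ for the second. The key observation is that the fiber of these fibrations is $\mathcal B_{|G|}(G)$, which — since $|G|$ measures on a set of $|G|$ points can have full support — is the \emph{entire} simplex $\Delta(G)$ spanned by the $|G|$ points of $G$, hence contractible. So the obstruction-theoretic picture is as favorable as possible.

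\medskip

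For $d\cat$: recall from the Borel-construction description in the excerpt that $EG_{|G|}$ is the orbit space $\mathcal B_{|G|}(G)\times_G EG$ of the diagonal $G$-action, and the total space $\mathcal B_{|G|}(G)\times EG$ maps $G$-equivariantly to $\mathcal B_{|G|}(G)=\Delta(G)$. Concretely, the section should send a point of $BG$, represented by a $G$-orbit $G\cdot e\subset EG$, to the barycenter of that orbit: the uniform probability measure $\frac1{|G|}\sum_{g\in G} g\cdot e$ supported on the fiber $u^{-1}(u(e))$, which has exactly $|G|$ points. This is well-defined (independent of the choice of $e$ in its orbit, since the barycenter is $G$-invariant), continuous, and lies in $\mathcal B_{|G|}$ of the fiber; hence it is a genuine section of $\mathcal B_{|G|}(u)$. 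By Proposition~\ref{chardcatg}, $d\cat(G)\le |G|-1$.

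\medskip

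For $d\TC$: the fibration $\mathcal B_{|G|}(v)$ has fiber $\mathcal B_{|G|}(G)=\Delta(G)$ again, and more usefully its fiber over $(x,y)\in BG\times BG$ is $\mathcal B_{|G|}(P(x,y))$ of the path space. I would follow the same averaging idea: for each pair of endpoints lift to $EG$, choose for each group element a canonical path between the appropriate lifts, and take the uniform measure on the resulting $|G|$-element set of paths; the left $G$-action makes this independent of the chosen lift. Equivalently, one can feed the classical (non-distributional) ``section up to deleting one set'' data through the $\mathcal B_n$ functor. Alternatively — and this may be the cleaner route — derive the $d\TC$ bound from the $d\cat$ bound plus a wedge/product comparison; but the direct averaging argument is most transparent. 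By Proposition~\ref{chardTCg} this yields $d\TC(G)\le |G|-1$.

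\medskip

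The main obstacle I anticipate is not the construction of the averaged measure but verifying \emph{continuity} of the section as a map into $\mathcal B_{|G|}$ with the chosen topology (Levy--Prokhorov metric, or quotient topology): one must check that locally, over a contractible neighborhood in $BG$ that is evenly covered, the $|G|$ branches of the lift vary continuously and the support stays inside a single fiber. This is straightforward over a trivializing chart of the covering $u$, so it reduces to a gluing/local-triviality check rather than anything deep; the finiteness of $G$ is exactly what guarantees the barycenter of a whole fiber is available in $\mathcal B_{|G|}$.
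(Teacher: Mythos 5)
Your $d\cat$ argument is the same as the paper's: take the evenly distributed (barycentric) measure on the $|G|$-point fiber $u^{-1}(x)$ of the universal cover $u:EG\to BG$; by Proposition~\ref{chardcatg} this gives $d\cat(G)\le |G|-1$. That matches the paper line for line.

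For $d\TC$, you correctly identify the relevant fibration $\mathcal B_{|G|}(v)$ from Proposition~\ref{chardTCg}, but then step back into the path-space picture and propose to ``choose for each group element a canonical path between the appropriate lifts'' and average. This is where you introduce an unnecessary (and unjustified) step: a continuous, $G$-equivariant choice of path between arbitrary pairs of points in $EG$ is an extra hypothesis on the model of $EG$ (it holds for the Milnor join or a simplicial model, but you never say this), and continuity of the resulting family of $|G|$ paths is exactly the kind of thing that would need an argument. The paper sidesteps this completely: since $BG$ is aspherical, $\bar p$ is fiber-homotopy equivalent to the covering $v:D(G)\to BG\times BG$ corresponding to the diagonal subgroup $\Delta G\le G\times G$, whose fiber is the \emph{discrete} set $(G\times G)/\Delta G\cong G$ of cardinality $|G|$. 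The section of $\mathcal B_{|G|}(v)$ is then just the evenly distributed measure on the discrete fiber $v^{-1}(z)$ — no paths, no canonical contraction, nothing to check beyond continuity of barycentric averaging over a finite covering, which is immediate. So: same averaging idea, but you should invoke the covering $v$ directly rather than routing through $P(X)$ and a hand-picked family of paths; as written, the ``canonical path'' step is a gap. (Your suggested alternative $d\TC(G)\le d\cat(G\times G)$ would only give $|G|^2-1$, so it does not recover the stated bound.)
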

\begin{proof}
Let $u:EG\to BG$ be the universal cover and let $n=|G|$.  Then the map $s: BG\to \mathcal B_n(EG)$  sending $x$ to the evenly distributed measure on $u^{-1}(x)$
defines a section of $\mathcal B_n(u)$. By Proposition~\ref{chardcatg} $d\cat G\le n-1$.

Similarly, the map $s:BG\times BG\to D_n(G)$ that sends each $x\in BG$  to the evenly distributed measure on $v^{-1}(x)$ defines  a section of $\mathcal B_n(v)$. By Proposition~\ref{chardTCg} $d\TC G\le n-1$.
\end{proof}

\begin{thm}\label{main 1}
Suppose that $p$ is prime, then $$d\cat(\mathbb Z_p)=d\TC(\mathbb Z_p)=p-1.$$
\end{thm}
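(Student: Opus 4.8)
The plan is to deduce both equalities from the single lower bound $d\cat(\mathbb Z_p)\ge p-1$. Indeed, Theorem~\ref{KW} gives $d\TC(\mathbb Z_p)\le p-1$, which together with the general inequality $d\cat(\mathbb Z_p)\le d\TC(\mathbb Z_p)$ produces the chain $p-1\le d\cat(\mathbb Z_p)\le d\TC(\mathbb Z_p)\le p-1$ as soon as the lower bound is in hand, forcing $d\cat(\mathbb Z_p)=d\TC(\mathbb Z_p)=p-1$. So the whole task reduces to showing $d\cat(\mathbb Z_p)\ge p-1$; note in passing that by Theorem~\ref{KW} the value $d\cat(\mathbb Z_p)$ is finite.

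To get the lower bound I would argue by contradiction, applying the diagonal bound Corollary~\ref{catdiag} to $X=B\mathbb Z_p=L^\infty_p$. Suppose $d\cat(\mathbb Z_p)=n$ with $n\le p-2$ (if $n\ge p-1$ there is nothing to prove). Then $d\cat(B\mathbb Z_p)=n<n+1$, so Corollary~\ref{catdiag} gives $d\cat(\mathbb Z_p)\ge c\ell(\delta_{(n+1)!}^*)$ for the diagonal $\delta_{(n+1)!}\colon B\mathbb Z_p\to SP^{(n+1)!}(B\mathbb Z_p)$. The crucial arithmetic point is that $m:=(n+1)!$ is coprime to $p$, since all of its factors $1,2,\dots,n+1\le p-1$ are coprime to the prime $p$; this is exactly where the hypothesis $n\le p-2$ enters.

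Next I would show that coprimality forces $c\ell(\delta_m^*)=\infty$, which is absurd. By Proposition~\ref{times m} and the Dold-Thom theorem, on $H_1(B\mathbb Z_p)=\pi_1(B\mathbb Z_p)=\mathbb Z_p$ the map $(\delta_m)_*=m\,(\xi_m)_*$ is multiplication by $m$ composed with the natural isomorphism $(\xi_m)_*$, hence an isomorphism because $p\nmid m$; since $\pi_1\big(SP^m(B\mathbb Z_p)\big)=H_1(B\mathbb Z_p)$ is abelian (when $m=1$, $\delta_1=\id$ and there is nothing to check here), the map $\delta_m$ induces an isomorphism on $\pi_1$. As $B\mathbb Z_p$ is aspherical, $\id_{B\mathbb Z_p}$ factors up to homotopy as $g\circ\delta_m$, where $g\colon SP^m(B\mathbb Z_p)\to B\mathbb Z_p$ realizes the inverse isomorphism on $\pi_1$. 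Writing $\beta=\beta_{\mathbb Z_p}$ for the Berstein-Schwarz class and using $\beta^k\ne 0$ for all $k$ (because $\cd(\mathbb Z_p)=\infty=c\ell(B\mathbb Z_p)$), the pulled-back classes $g^*\beta\in H^1\big(SP^m(B\mathbb Z_p);g^*I(\mathbb Z_p)\big)$ satisfy $\delta_m^*\big((g^*\beta)^k\big)=(g\circ\delta_m)^*(\beta^k)=\beta^k\ne 0$ for every $k$, so $c\ell(\delta_m^*)=\infty$. This contradicts the finite value $n=d\cat(\mathbb Z_p)\ge c\ell(\delta_m^*)$, hence $d\cat(\mathbb Z_p)\ge p-1$, which completes the proof. (In particular the case $n=0$ is excluded, recovering $d\cat(\mathbb Z_2)=1$.)

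The genuine content — and the only place I expect real care to be needed — is the interaction between the arithmetic of factorials and the diagonal map: $\delta_{n!}$ acts on $\pi_1=\mathbb Z_p$ by multiplication by $n!$, which is invertible exactly when $n<p$ and first vanishes at $n=p-1$, precisely the threshold at which the Knudsen-Weinberger upper bound $d\cat(\mathbb Z_p)\le p-1$ takes over. Everything else is routine: identifying $\pi_1\big(SP^mX\big)$ via Dold-Thom and factoring $\id_{B\mathbb Z_p}$ through $\delta_m$ by asphericity. An alternative would be to run the argument on a finite skeleton $L^{n+1}_p$ — passing from $d\cat(\mathbb Z_p)\le n$ to $d\cat(L^{n+1}_p)\le n$ via Proposition~\ref{CW} and then using a Proposition~\ref{BerSchw}-style cup-length estimate for $\delta_m\colon L^{n+1}_p\to SP^m(L^{n+1}_p)$ — but handling $B\mathbb Z_p$ directly avoids the skeleton bookkeeping.
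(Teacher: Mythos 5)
Your proof is correct, and it follows a genuinely different and considerably shorter route than the paper's. Both arguments reduce to establishing the lower bound $d\cat(\mathbb Z_p)\ge p-1$, which combined with Theorem~\ref{KW} and $d\cat\le d\TC$ gives the theorem. The paper's proof passes to the finite skeleton $L^{p-1}_p$ via Proposition~\ref{CW}, applies Lemma~\ref{SP} to obtain a $(p-1)$-fold open cover of $L^{p-1}_p$ on which the partial diagonals $\delta_i$ are null-homotopic, and then invokes its Singhof-type partition Theorem~\ref{f-categorical} to replace that cover by a partition whose pieces deformation retract onto $1$-dimensional complexes; on a $1$-complex null-homotopy into $SP^i(L^{p-1}_p)$ is detected by $\pi_1=\mathbb Z_p$, so the coprimality $\gcd(i,p)=1$ upgrades ``$\delta_i$ null-homotopic on $P_i$'' to ``$\xi_i$ null-homotopic on $P_i$,'' and this contradicts the Berstein--Schwarz cup-length bound of Proposition~\ref{BerSchw}. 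You instead stay on the infinite-dimensional $B\mathbb Z_p$ and apply Corollary~\ref{catdiag} directly: with $m=(d\cat(\mathbb Z_p)+1)!$ coprime to $p$, the diagonal $\delta_m$ induces multiplication by $m$ on $\pi_1(SP^m(B\mathbb Z_p))\cong H_1(\mathbb Z_p)=\mathbb Z_p$ (Dold--Thom plus Proposition~\ref{times m}), hence an isomorphism, so asphericity gives a homotopy factorization $\id_{B\mathbb Z_p}\simeq g\circ\delta_m$ and therefore $c\ell(\delta_m^\ast)=\infty$, contradicting the finiteness of $d\cat(\mathbb Z_p)$ guaranteed by Theorem~\ref{KW}. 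This bypasses Theorem~\ref{f-categorical} and Proposition~\ref{CW} entirely. The one ingredient worth making fully explicit, which you do flag, is the $\pi_1$ computation of symmetric products for $m\ge 2$; once that is in place the contradiction is immediate because $\beta_{\mathbb Z_p}^k\ne 0$ for all $k$. What the paper's longer route buys is a localized obstruction on a fixed finite complex $L^{p-1}_p$ (useful if one cares about finite skeleta or related finer questions), but for the stated equality your argument suffices and is cleaner.
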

\begin{proof}
Let $L_p^\infty=K(\mathbb Z_p,1)$ be the infinite lens space and $ L^{n}_p$ denote its $n$-dimensional skeleton for $n=p-1$.
We show that $d\cat(L^n_p)\ge n$. Assume the contrary, $d\cat(L^n_p)\le n-1$. Then by Lemma~\ref{SP} there is an open cover $\{U_i\}_{i=1}^n$ of $L^n_p$ such that each of the
diagonal inclusions $\delta_i:U_i\to SP^i(L^n_p)$ is null-homotopic. By Theorem~\ref{f-categorical} (see the remark after it) there is a partition $L^n_p=P_1\cup\dots\cup P_n$ into polyhedra such that $P_i\subset U_i$ and  each $P_i$ admits a deformation retract onto a complex $K_i$ of $\dim K_i\le 1$. We claim that the base point inclusion $\xi_i:L^n_p\to SP^i(L^n_p)$ restricted to $P_i$ is null-homotopic. Note that $K_i$ is homotopy equivalent to finite disjoint union of  points and wedges of circles $\vee^{k}S^1$. Let $h_i:\coprod  \vee_{j=1}^{k}S^1\to K_i$ be such homotopy equivalence. We show that $\xi_i h_i$ is null-homotopic.
It suffices to show that the restriction of $\xi_ih_i$ to each circle in every wedge is null-homotopic. By Proposition~\ref{times m} 
on the level of the fundamental groups we have the equality $[\delta_ih_i|_{S^1}]=i[\xi_ih_i|_{S^i}]$. Since $\delta_i$ is null-homotopic on $P_i$, we obtain the equality
$i[\xi_ih_i|_{S^i}]=0$ in the fundamental group $\pi_1(SP^i(L^n_p))=\mathbb Z_p$. Since $p$ is prime, this implies that $[\xi_ih_i|_{S^i}]=0$.
We obtain that $\cat(\xi_n)\le n-1$ which contradicts with Proposition~\ref{BerSchw}. Thus, $d\cat(L^n_p)\ge n$.

Assume that $d\cat(L^\infty_p)\le n-1$. Then
by Proposition~\ref{CW}, $d\cat(L^n_p)\le n-1$ and we obtain a contradiction.
Thus, 
$d\cat(\mathbb Z_p)=d\cat(L^\infty_p)\ge n=p-1$. Then in view of Theorem~\ref{KW} we obtain 
$$p-1\ge d\TC(\mathbb Z_p)\ge d\cat(\mathbb Z_p)\ge p-1.$$
\end{proof}

\section{Further generalization of $d\TC$}

For a metric space $X$ we call a path $f:[0,1]\to \mathcal B_n(X)$ {\em resolvable} if there is a continuous map $F:I\times\{1,\dots, n\}\to X$ and a probability measure $\mu$ on the set $\{1,\dots,n\}$
such that $f(t)=\mathcal B_n(F)(t\times\mu)$ for all $t\in I$.

We denote by $RP(\mathcal B_n(X))\subset P(\mathcal B_n(X))$ the subspace of resolvable paths.
Then the {\em $n$-intertwined navigation algorithm} on a space $X$ is a continuous map
$$
m:X\times X\to RP(\mathcal B_n(X))
$$
such that $m(x,y)(0)=\delta_x$ and $m(x,y)(1)=\delta_y$ for all $x,y\in X$.
We define {\em the intertwining topological complexity} $i\TC(X)$ of $X$ to be the minimal $n$ such that there is an $(n+1)$-intertwined navigation algorithm on $X$.

We define {\em the intertwining LS-category} $i\cat(X)$ of $X$ as the minimal number $n$ such that there is a continuous map
$h:X\to RP(\mathcal B_{n+1}(X))$ satisfying $h(x)(0)=\delta_x$ and $h(x)(1)=\delta_{x_0}$ for all $x\in X$~\cite{DJ}.

The following are straightforward.
\begin{prop}
$i\TC(X)\le d\TC(X)$.
\end{prop}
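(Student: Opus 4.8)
The plan is to convert a distributional motion planner into an intertwined navigation algorithm by evaluating its output paths at each time $t\in I$. Suppose $d\TC(X)=n$ and fix, by definition, a continuous map $s\colon X\times X\to\mathcal B_{n+1}(P(X))$ with $s(x,y)\in\mathcal B_{n+1}(P(x,y))$ for all $(x,y)$. Writing a value as $s(x,y)=\sum_{i=1}^{n+1}\lambda_i\phi_i$ with $\phi_i\in P(x,y)$ and $\sum\lambda_i=1$ (the individual summands need not vary continuously with $(x,y)$, but $s$ does), define
$$m(x,y)(t)\ =\ \sum_{i=1}^{n+1}\lambda_i\,\delta_{\phi_i(t)}\ \in\ \mathcal B_{n+1}(X),\qquad t\in I.$$
Since $\phi_i(0)=x$ and $\phi_i(1)=y$ for every $i$ and $\sum\lambda_i=1$, we get $m(x,y)(0)=\delta_x$ and $m(x,y)(1)=\delta_y$, so $m$ has the correct endpoint behaviour; the content is to check that $m$ defines a continuous map $X\times X\to RP(\mathcal B_{n+1}(X))$.

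First I would verify that each individual path $m(x,y)$ is resolvable. Fixing $(x,y)$ and a representative $s(x,y)=\sum\lambda_i\phi_i$, put $F\colon I\times\{1,\dots,n+1\}\to X$, $F(t,i)=\phi_i(t)$ — continuous because the second factor is discrete and each $\phi_i$ is a path — and let $\mu$ be the probability measure on $\{1,\dots,n+1\}$ with $\mu(\{i\})=\lambda_i$. Then $m(x,y)(t)=\mathcal B_{n+1}(F)(\delta_t\times\mu)$ by construction, so $m(x,y)\in RP(\mathcal B_{n+1}(X))$. Only the existence of some such $F$ and $\mu$ for each fixed $(x,y)$ is required here, so the non-canonicity of the decomposition is harmless.

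Next I would prove continuity of $m$. The evaluation $\mathrm{ev}\colon I\times P(X)\to X$, $(t,\phi)\mapsto\phi(t)$, is continuous, and I claim the fibrewise application of $\mathcal B_{n+1}$ in the path variable, carrying along $t$, yields a continuous map
$$E\colon I\times\mathcal B_{n+1}(P(X))\longrightarrow\mathcal B_{n+1}(X),\qquad E\Big(t,\sum\lambda_i\phi_i\Big)=\sum\lambda_i\,\phi_i(t).$$
This $E$ factors as $I\times\mathcal B_{n+1}(P(X))\to\mathcal B_{n+1}\big(I\times P(X)\big)\xrightarrow{\ \mathcal B_{n+1}(\mathrm{ev})\ }\mathcal B_{n+1}(X)$, where the first arrow sends $(t,\sum\lambda_i\phi_i)$ to $\sum\lambda_i(t,\phi_i)$; its continuity I would check directly — for the Levy--Prokhorov topology the assignment $(t,\nu)\mapsto(j_t)_*\nu$, $j_t(\phi)=(t,\phi)$, is non-expanding in $\nu$ (since $j_t$ is an isometric embedding) and $1$-Lipschitz in $t$, while for the quotient topology on $\mathcal B_n(-)$ the same map is continuous for the analogous formal reason, so either choice of topology works. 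Then $\widetilde m:=E\circ(\mathrm{id}_I\times s)\colon I\times(X\times X)\to\mathcal B_{n+1}(X)$ is continuous, and by the exponential law ($I$ being locally compact Hausdorff, with the compact--open topology on path spaces) its adjoint $m\colon X\times X\to P(\mathcal B_{n+1}(X))$ is continuous; by the previous paragraph it takes values in $RP(\mathcal B_{n+1}(X))$. Hence $m$ is an $(n+1)$-intertwined navigation algorithm on $X$, so $i\TC(X)\le n=d\TC(X)$.

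The only genuinely non-formal step is the continuity of $E$ — the assertion that $\mathcal B_{n+1}$ may be applied "with parameters" over $I$ — and that is where I would concentrate the care, since it is the one place the topology on $\mathcal B_n(-)$ actually enters. The endpoint identities, the resolvability of each path, and the passage from $\widetilde m$ to $m$ via the exponential law are routine.
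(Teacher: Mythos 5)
Your proof is correct and is exactly the construction the paper has in mind when it labels the proposition ``straightforward'' without supplying an argument: push a distributional planner $s$ forward through the evaluation map to get the fibrewise family $t\mapsto(\mathrm{ev}_t)_*s(x,y)$, observe that this path is by definition resolvable (take $F(t,i)=\phi_i(t)$ and $\mu=\sum\lambda_i\delta_i$), and verify the endpoint conditions. You are right that the only point needing any care is the continuity of $E(t,\nu)=(\mathrm{ev}_t)_*\nu$, and your factorisation through $\mathcal B_{n+1}(I\times P(X))$ together with the exponential law handles it cleanly in either of the two topologies the paper allows on $\mathcal B_n(-)$.
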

\begin{prop}
$i\TC(X)$ is homotopy invariant.
\end{prop}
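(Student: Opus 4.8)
The plan is to prove homotopy invariance in the usual way: if $f\colon X\to Y$ is a homotopy equivalence with homotopy inverse $g\colon Y\to X$, we produce from any $(n+1)$-intertwined navigation algorithm on $Y$ an $(n+1)$-intertwined navigation algorithm on $X$, which yields $i\TC(X)\le i\TC(Y)$; running the same construction with the roles of $X$ and $Y$ interchanged gives the opposite inequality. So assume $i\TC(Y)=n$, fix an $(n+1)$-intertwined navigation algorithm $m\colon Y\times Y\to RP(\mathcal B_{n+1}(Y))$ and a homotopy $K\colon X\times I\to X$ with $K(\cdot,0)=gf$ and $K(\cdot,1)=\id_X$.

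The construction rests on three elementary facts. First, $\mathcal B_{n+1}$ is functorial and carries resolvable paths to resolvable paths: if $\gamma\colon I\to\mathcal B_{n+1}(Y)$ is resolved by $(F,\mu)$, then $\mathcal B_{n+1}(g)\circ\gamma$ is resolved by $(g\circ F,\mu)$. Second, for each $x\in X$ the path $\alpha_x(t)=\delta_{K(x,t)}$ runs in $\mathcal B_{n+1}(X)$ from $\delta_{gf(x)}$ to $\delta_x$, is resolvable (take the resolving map independent of the label variable, with any measure one likes), and depends continuously on $x$. Third, reversal and concatenation of paths are continuous operations. Now set
\[
m'(x,x')\;=\;\overline{\alpha_x}\ \ast\ \bigl(\mathcal B_{n+1}(g)\circ m(f(x),f(x'))\bigr)\ \ast\ \alpha_{x'},
\]
a path in $\mathcal B_{n+1}(X)$ from $\delta_x$ to $\delta_{x'}$; by the three facts above, $m'\colon X\times X\to P(\mathcal B_{n+1}(X))$ is continuous.

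What remains is to check that each $m'(x,x')$ is a resolvable path, so that $m'$ actually maps into $RP(\mathcal B_{n+1}(X))$ and is therefore an $(n+1)$-intertwined navigation algorithm on $X$. Fix $(x,x')$ and a resolution $(F',\mu')$ of $m(f(x),f(x'))$; then $\overline{\alpha_x}$, the middle factor, and $\alpha_{x'}$ are resolved with the common measure $\mu'$ by the resolving maps $(t,i)\mapsto K(x,1-t)$, $(t,i)\mapsto g(F'(t,i))$, and $(t,i)\mapsto K(x',t)$, which we splice into one resolving map $F''$ for $m'(x,x')$. For labels $i\in\supp\mu'$ the function $F''(\cdot,i)$ is automatically continuous at the two splicing times, because there the endpoints being matched all equal $gf(x)$, resp.\ $gf(x')$. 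For $i\notin\supp\mu'$ the naive splice need not be continuous, but since $\mu'(\{i\})=0$ we may overwrite $F''(\cdot,i)$ on the middle third by $t\mapsto g\bigl(F'(t,i_0)\bigr)$ for a fixed $i_0\in\supp\mu'$ without changing the measure path $\mathcal B_{n+1}(F'')(\,\cdot\,\times\mu')$; this makes $F''$ continuous. This bookkeeping with the mass-zero coordinates — where it is essential that we only need resolvability of each individual path, not a resolution varying continuously with $(x,x')$ — is the one genuinely delicate point; everything else is formal. With $m'$ constructed, $i\TC(X)\le n=i\TC(Y)$, and symmetry completes the proof.
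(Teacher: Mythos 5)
Your argument is correct, and it is the natural one: the paper offers no proof (it lists this under ``The following are straightforward''), and the standard concatenate-with-the-homotopy construction you carry out — $m'(x,x') = \overline{\alpha_x} * (\mathcal B_{n+1}(g)\circ m(f(x),f(x'))) * \alpha_{x'}$ — is precisely the intended one. You also correctly identify and handle the only point that is not purely formal: the resolvability of the spliced path. Since the outer legs $\alpha_x,\alpha_{x'}$ have constant-in-label resolving maps, they are resolvable for \emph{any} measure, so one can borrow the middle path's measure $\mu'$; and the overwrite of the mass-zero labels on the middle third is genuinely necessary (for $i\notin\supp\mu'$ the given $F'(0,i)$ need not equal $f(x)$, so the naive splice would be discontinuous) and genuinely harmless (it does not change $\mathcal B_{n+1}(F'')(\,\cdot\,\times\mu')$). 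Nothing to add.
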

\begin{cor}
The invariant $i\TC(\Gamma):=i\TC(K(\Gamma,1))$ is well-defined for discrete groups $\Gamma$.
\end{cor}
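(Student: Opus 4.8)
The plan is to obtain the Corollary as a formal consequence of the preceding Proposition (homotopy invariance of $i\TC$) together with the classical uniqueness of Eilenberg--MacLane complexes. Since $i\TC(\Gamma)$ is \emph{defined} as $i\TC$ evaluated on a space of the homotopy type of $K(\Gamma,1)$, the only thing requiring proof is that this value does not depend on the choice of model.

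First I would recall the standard fact that any two CW complexes of type $K(\Gamma,1)$ are homotopy equivalent: given CW models $X$ and $Y$ of $K(\Gamma,1)$, a cellular map $f\colon X\to Y$ inducing the identity on $\pi_1=\Gamma$ exists by obstruction theory (the obstructions to extending over higher skeleta lie in $H^{k+1}(X;\pi_k(Y))=0$ for $k\ge 2$ since $Y$ is aspherical), and such an $f$ is a homotopy equivalence by Whitehead's theorem, both spaces being aspherical and $f$ a $\pi_1$-isomorphism. If one wishes to stay strictly inside the metric-space framework in which resolvable paths and $i\TC$ are defined, one simply notes that $i\TC$ is taken with respect to the same choice of topology on $\mathcal B_n$ (say the quotient topology, or the metric topology for locally finite models) for which, as discussed in the introduction, the relevant comparison maps are homotopy equivalences on CW complexes; in any such model the two $K(\Gamma,1)$'s remain homotopy equivalent.

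Then, applying the preceding Proposition, $i\TC(X)=i\TC(Y)$, so the number $i\TC(K(\Gamma,1))$ is independent of the model and $i\TC(\Gamma)$ is well-defined. I expect essentially no obstacle: the substantive content is the transport of an $(n+1)$-intertwined navigation algorithm along a homotopy equivalence $h$ (post-compose paths in $\mathcal B_{n+1}$ with the induced map $\mathcal B_{n+1}(h)$, reparametrize, and use a homotopy inverse to bridge the endpoints, checking that resolvability is preserved along the way), and this is precisely what the homotopy-invariance Proposition already supplies, exactly as in the analogous arguments for $\TC$ and $d\TC$. The Corollary is then immediate.
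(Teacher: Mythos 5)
Your argument is correct and is exactly the one the paper intends: the corollary is a direct consequence of the homotopy invariance of $i\TC$ together with the standard fact that any two CW models of $K(\Gamma,1)$ are homotopy equivalent. The paper states it as an immediate corollary of the preceding proposition without further proof, which matches your reasoning.
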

\begin{prop}
For the Higman group  we have $i\TC(H)=1$.
\end{prop}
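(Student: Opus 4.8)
The plan is to pin $i\TC(H)$ between $1$ and $1$ by treating the lower and upper bounds separately, the first being essentially formal and the second exploiting that the Higman group is acyclic with $\cd H=2$.

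\emph{Lower bound.} I would first note that $i\TC(X)=0$ forces $X$ to be contractible. Indeed, when $n=0$ one has $\mathcal B_1(X)=X$ and every path in $X$ is resolvable, so $RP(\mathcal B_1(X))=P(X)$, and a $1$-intertwined navigation algorithm is exactly a global section of the endpoint fibration $\bar p\colon P(X)\to X\times X$, which exists only when $X$ is contractible. Since $BH=K(H,1)$ is not contractible (as $H\neq 1$), this gives $i\TC(H)=i\TC(BH)\ge 1$.

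\emph{Upper bound.} Take a (finite) two-dimensional classifying space $X=BH$; since $H$ is acyclic, $\widetilde H_*(X;\Z)=0$. The first step is to show that $\mathcal B_2(X)$ is contractible. By Proposition~\ref{h.e.}, $\mathcal B_2(X)\simeq Symm^{\ast 2}(X)$, which is simply connected because $X$ is connected; by Theorem~\ref{KK} its suspension is homeomorphic to $\overline{SP}^{\,2}(\Sigma X)=SP^2(\Sigma X)/SP^1(\Sigma X)$, whose reduced homology is $R_{*,2}(\Sigma X)$. By Dold's theorem~\cite{D} the bigraded groups $R_{*,m}$ depend only on the homology of the space, and $\Sigma X$ has the homology of a point, so $R_{*,2}(\Sigma X)=R_{*,2}(\mathrm{pt})=0$. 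Hence $\Sigma Symm^{\ast 2}(X)$, and therefore $Symm^{\ast 2}(X)$, has trivial reduced homology; being simply connected it is contractible, and so is $\mathcal B_2(X)$. The second step is to manufacture a $2$-intertwined navigation algorithm from this. I would consider the map over $X\times X$ sending a resolvable path in $\mathcal B_2(X)$ to its pair of endpoints; such an algorithm is precisely a section of it. Its fiber over $(x,y)$ is the space of resolvable paths from $\delta_x$ to $\delta_y$, which is the fiberwise join $P(x,y)\ast P(x,y)$ with the sub-join lying over the diagonal of $P(x,y)$ collapsed; since $P(x,y)\simeq\Omega X$ and $\Omega X$ is homotopy equivalent to the discrete group $H$, this fiber is a connected complex. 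A section exists over the $1$-skeleton of $X\times X$ automatically, and I would extend it cell by cell, the successive obstructions lying in cohomology of $X\times X$ with coefficients assembled from the homology of the fiber; these are controlled by $\widetilde H_*(X)=0$, equivalently $\widetilde H_*(X\times X)=0$, and vanish. (Alternatively: using the contractibility of $\mathcal B_2(X)$, pick any section of the unconstrained path fibration $P(\mathcal B_2(X))\to\mathcal B_2(X)\times\mathcal B_2(X)$, restrict it along the Dirac embedding to a — generally non-resolvable — path from $\delta_x$ to $\delta_y$, and then deform it through the skeleta of $X\times X$ into a resolvable path, the correction being unobstructed for the same homological reason.) Combined with the lower bound this yields $i\TC(H)=1$.

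The hard part will be precisely this last conversion step. Resolvable paths are far more rigid than arbitrary paths in $\mathcal B_2(X)$, so one must check with care that collapsing the diagonal sub-join really makes the fiber connected enough, and — more delicately — that every obstruction to a \emph{resolvable} section is genuinely detected by the vanishing homology of $X$ rather than by a twisted coefficient group that mere acyclicity of $H$ (a condition on $\widetilde H_*(H;\Z)$, not on all twisted homology) does not control. Since $\dim(X\times X)=4$ exceeds the connectivity of the fiber, the argument cannot stop at the primary obstruction, so this coefficient bookkeeping, together with the precise identification of the resolvable-path fiber, is where the real work lies.
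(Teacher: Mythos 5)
Your first step---that $SP^2(BH)$, equivalently $\mathcal B_2(BH)$, is contractible, via Dold's theorem applied to the acyclic space $BH$---is exactly the paper's starting point. But the second half is where your plan and the paper genuinely diverge, and your own warning at the end is the correct one: the obstruction-theoretic route you sketch will not close.

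The problem is precisely the one you flag. Fiberwise extension of a section of the resolvable-path fibration over $X\times X$ requires vanishing of cohomology of $X\times X$ with \emph{twisted} coefficients in the homotopy of the fiber, and the Higman group is acyclic only with trivial $\Z$ coefficients; since $\cd H=2$ one has $H^2(H;\Z H)\ne 0$, so there is no vanishing theorem with local coefficients to fall back on. Your alternative (deform an arbitrary path in $\mathcal B_2(X)$ through the skeleta into a resolvable one) runs into the same wall, and your identification of the fiber of the resolvable-path fibration is also not quite the collapsed join you describe (the measure $\mu$ is allowed to vary, and one must quotient by the $S_2$ action), so the ``connected enough'' claim would need its own argument.

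The paper avoids all of this by not doing fiberwise obstruction theory at all. Instead it solves an \emph{absolute} extension problem: because $SP^2(K)$ is contractible (hence an absolute extensor), the diagonal map $\delta\coprod\delta: K\coprod K\to SP^2(K)$ extends over the whole join $K\ast K$. Precomposing with the quotient $K\times K\times I\to K\ast K$ gives a map $K\times K\to P(SP^2(K))$, and postcomposing with $\phi:SP^2(K)\to\mathcal B_2(K)$, $[u,v]\mapsto\tfrac12 u+\tfrac12 v$, lands in $P(\mathcal B_2(K))$. The decisive point---the one missing from your plan---is that routing the construction through $SP^2(K)$ makes resolvability come for free: a path in $SP^2(K)$ lifts to a map $I\times\{1,2\}\to K$, so its image under $\phi$ is by definition a resolvable path with the constant measure $(\tfrac12,\tfrac12)$. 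No cell-by-cell extension, no twisted coefficients. So: same contractibility input, but the conversion to a navigation algorithm must go through $SP^2$ and the join-extension trick rather than through a fibration over $X\times X$; the latter, as written, is a gap you correctly suspected but did not fill.
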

\begin{proof}
Since $K=K(H,1)$ is acyclic, by Dold's theorem~\cite{D} $\tilde H_i(SP^2(K))=\tilde H_i(SP^2(pt))=0$ fro all $i$.
The fundamental group $\pi_1(SP^2(K))= H_1(K)=0$ . Then by Hurewicz theorem the space
$SP^2(K)$ is contractible. We denote by $[x,y]\in SP^2(K)$ the orbit of $(x,y)\in K\times K$ under permutation of coordinates.
Let $\delta:K\to SP^2(K)$, $\delta(x)=[x,x]$, denote the diagonal embedding.
Since $SP^2(K) $ is  a contractible CW complex~\cite{Mi}, it is an absolute extensor. Therefore, the map $K\coprod K\stackrel{\coprod\delta}\to SP^2(K)$,  $K\coprod K\subset_{\text{Cl}} K\ast K$, can be extended to a map  $f:K\ast K\to SP^2(K)$
of the join product. Let $q:K\times K\times[0,1]\to K\ast K$ be the quotient map
from the definition of the join product. The composition $f\circ q$ produces the map $\psi:K\times K\to P(SP^2(K))$ to the path space.
 We define an embedding $\phi:SP^2(K)\to\mathcal B_2(K)$ by the formula
$$\phi([x,y])=\frac{1}{2}x+\frac{1}{2}y.$$ The map $\phi$ induces the map of path spaces $$P(\phi):P(SP^2(K))\to P(\mathcal B_2(K)).$$ Note that every path in $SP^2(K)$ is the image
of a map $I\times\{1,2\}\to X$. It implies that $P(\phi)$ has its image in resolvable paths.
Then $$m=P(\phi)\circ\psi:K\times K\to RP(\mathcal B_{2}(K))$$ is a requested navigation algorithm.
\end{proof}
\begin{cor}
For the Higman group $i\cat H=1$, whereas $d\cat H=2$.
\end{cor}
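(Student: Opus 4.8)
The plan is to prove the two values separately, each by reduction to a result already available. The equality $d\cat H=2$ will follow from the Knudsen--Weinberger Eilenberg--Ganea theorem (Theorem~\ref{E-G for dcat}) once we know $\cd H=2$; the equality $i\cat H=1$ will follow from the preceding proposition (which gives $i\TC H=1$) via the elementary inequality $i\cat(X)\le i\TC(X)$, together with non-contractibility of $K(H,1)$.

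First, $d\cat H=2$. Being an iterated amalgamated product of Baumslag--Solitar groups $BS(1,2)$ over infinite cyclic subgroups, $H$ has a finite-dimensional classifying space with $\cd H\le 2$, and in particular it is torsion free. Since $H$ is a nontrivial acyclic group it is not free, so by the Stallings--Swan theorem $\cd H\ge 2$; hence $\cd H=2$, and Theorem~\ref{E-G for dcat} gives $d\cat H=\cd H=2$.

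Next, $i\cat H=1$. I would first record that $i\cat(X)\le i\TC(X)$ for every metric space $X$: restricting an $(n+1)$-intertwined navigation algorithm $m:X\times X\to RP(\mathcal B_{n+1}(X))$ to $X\times\{x_0\}$ produces a continuous map $x\mapsto m(x,x_0)$ satisfying $m(x,x_0)(0)=\delta_x$ and $m(x,x_0)(1)=\delta_{x_0}$, which is an $(n+1)$-intertwined contraction. Applying this to the navigation algorithm $m=P(\phi)\circ\psi$ built in the previous proposition for $K=K(H,1)$ yields $i\cat H\le i\TC H=1$ (one could instead extend $\delta:K\to SP^2(K)$ over the cone $CK$, using contractibility of $SP^2(K)$). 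For the opposite inequality, $i\cat H=0$ would supply a continuous map $K\to RP(\mathcal B_1(K))\cong P(K)$ whose adjoint $K\times I\to K$ is a contraction of $K$ to $x_0$, contradicting $\pi_1(K)=H\ne 1$. Hence $i\cat H=1$, which is strictly smaller than $d\cat H=2$.

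The argument presents no serious obstacle; the substantive inputs — acyclicity of the Higman group, the $2$-dimensionality of its classifying space, the Eilenberg--Ganea theorem for $d\cat$, and the computation $i\TC H=1$ — are already in place. The points deserving a word of care are the value $\cd H=2$ (equivalently, that $H$ is not free and has a $2$-dimensional $K(H,1)$) and the observation that restricting a navigation algorithm to $X\times\{x_0\}$ still lands in resolvable paths, which is immediate because each $m(x,x_0)$ is already a resolvable path in $\mathcal B_2(K)$.
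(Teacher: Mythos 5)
Your argument is correct and is what the paper intends (it leaves the corollary without an explicit proof). You extract $i\cat H\le 1$ by restricting the navigation algorithm $m=P(\phi)\circ\psi$ from the proposition to $K\times\{x_0\}$, note that $i\cat H=0$ would force $K(H,1)$ to be contractible, and obtain $d\cat H=2$ from $\cd H=2$ (Higman's group is torsion-free of cohomological dimension~$2$, being a nontrivial acyclic group built as amalgams of $BS(1,2)$'s over infinite cyclic subgroups) together with Theorem~\ref{E-G for dcat}; all of these steps are exactly the intended ones.
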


\end{document}